\numberwithin{equation}{section}
\theoremstyle{plain}
\newcommand{\subs}{\subseteq}
\newcommand{\setm}{\setminus}
\newcommand{\lf}{\lfloor}
\newcommand{\rf}{\rfloor}
\newcommand{\lc}{\lceil}
\newcommand{\rc}{\rceil}
\newtheorem{theorem}{Theorem} [section]
\newtheorem{lemma}[theorem]{Lemma}
\newtheorem{obs}[theorem]{Observation}
\newtheorem{prop}[theorem]{Proposition}
\newtheorem{fact}[theorem]{Fact}
\newtheorem{claim}[theorem]{Claim}
\newtheorem{cor}[theorem]{Corollary}
\newtheorem{conj}[theorem]{Conjecture}
\newtheorem*{conj*}{The separation conjecture}
\theoremstyle{ition}
\newtheorem{defn}[theorem]{ition}
\newcommand{\fu}{f^+}
\newcommand{\fl}{f^-}
\newcommand{\N}{\mathbb N}
\newcommand{\cM}{\mathcal M}
\newcommand{\B}{\mathbb B}
\newcommand{\F}{\mathcal F}
\newcommand{\St}{\operatorname{St}}
\begin{document}

\title{On the separation conjecture in Avoider--Enforcer games}

\author{Ma\l gorzata Bednarska-Bzd\c ega}
\address{Faculty of Mathematics and CS, Adam Mickiewicz University in Pozna\'n}
\email{mbed@amu.edu.pl}

\author{Omri Ben-Eliezer}
\address{School of Computer Science, Tel-Aviv University}
\email{omrib@mail.tau.ac.il}

\author{Lior Gishboliner}
\address{School of Mathematics, Tel-Aviv University}
\email{liorgis1@post.tau.ac.il}

\author{Tuan Tran}
\address{Department of Mathematics, ETH, 8092 Zurich}
\email{manh.tran@math.ethz.ch} 

\maketitle

\begin{abstract}
Given a fixed graph $H$ with at least two edges and positive integers $n$ and $b$, the strict $(1 \colon b)$ Avoider--Enforcer $H$-game, played on the edge set of $K_n$, has the following rules: In each turn Avoider picks exactly one edge, and then Enforcer picks exactly $b$ edges. Avoider wins if and only if the subgraph containing her/his edges is $H$-free after all edges of $K_n$ are taken. 

The \emph{lower threshold} of a graph $H$ with respect to $n$ is the largest $b_0$ for which Enforcer has a winning strategy for the $(1\colon b)$ $H$-game played on $K_n$ for any $b \leq b_0$, and the \emph{upper threshold} is the largest $b$ for which Enforcer wins the $(1 \colon b)$ game.
The separation conjecture of Hefetz, Krivelevich, Stojakovi\'c and Szab\'o states that for any connected $H$, the lower threshold and the upper threshold of the Avoider--Enforcer $H$-game played on $K_n$ are not of the same order in $n$.  
Until now, the conjecture has been verified only for stars, by Grzesik, Mikala\v{c}ki, Nagy, Naor, Patkos and Skerman.

We show that the conjecture holds for every connected graph $H$ with at most one cycle (and at least two edges), with a polynomial separation between the lower and upper thresholds. 
We also prove an upper bound for the lower threshold of \emph{any} graph $H$ with at least two edges, and show that this bound is tight for all graphs in which each connected component contains at most one cycle. Along the way, we establish number-theoretic tools that might be useful for other problems of this type.
\end{abstract}

\section{Introduction} \label{intro}

\emph{Positional games} are two-player combinatorial games played on a board that is usually finite, where the players alternately claim previously unclaimed elements from the board until one of the players achieves her/his winning criteria, or until all elements on the board are taken. The game is partitioned into \emph{turns}, where in each turn the first player makes her move, and then the second player makes his move. 
Examples range from recreational games, such as Tic-Tac-Toe and Hex, to games played on hypergraphs, where the elements claimed are vertices or edges. The investigation of positional games has been very active lately, as is largely covered by the 2008 book of Beck \cite{ttt} and the newer book of Hefetz, Krivelevich, Stojakovi\'c and Szab\'o \cite{HefetzKrivStoSz2014-book}.
Here we focus on a subclass of positional games, called {\it Avoider--Enforcer} games. In the following
subsections we review what is already known, before presenting our new results.

\subsection{To win or not to win}
Perhaps the most widely investigated type of games in the graph case is \emph{Maker--Breaker} games. Here, Maker wins if by the end of the game, the graph consisting of her edges contains a desirable \emph{winning set} of edges (e.g. the edges of a triangle), and Breaker wins otherwise. For example, in the Maker--Breaker $H$-game (for a predetermined graph $H$), Maker wins if and only if, after all edges have been taken, her graph contains an $H$-copy. 
Naturally, it is interesting to study the {\em mis\`{e}re} version of Maker--Breaker games. The mis\`{e}re version of a game is played according to the original rules, but the goal of the game is to {\em lose}.
Mis\`{e}re Maker--Breaker games are called \emph{Avoider--Enforcer} games: here, Avoider \emph{loses} if by the end of the game, the graph consisting of her edges contains an undesirable \emph{losing set} of edges, 
and wins otherwise. For example, in the Avoider--Enforcer $H$-game, Avoider loses if her graph in the end of the game contains an $H$-copy.
Our focus in this paper is on Avoider--Enforcer games, see e.g.~\cite{acyclic, planar,ae10,ae07,cover} for some previous results on this type of games.

It turns out that if each player picks exactly one edge per turn then Enforcer wins the $H$-game easily for every $H$ (provided that the board is large enough). 
Thus, it is natural to consider a \emph{biased} variant, where Avoider only picks one edge per turn, and Enforcer picks $b$ edges per turn. 
The general question here is to understand, for an Avoider--Enforcer $H$-game with given parameters $n$ and $b$, whether Avoider has a winning strategy for this game, or Enforcer has such a strategy. Note that in games of this type, where the game board is finite and the outcome of the game is always a win for one of the players, that is, exactly one of the players has a winning strategy. This is described in more detail in \cite[Section 1.2]{HefetzKrivStoSz2014-book}. 

The biased $H$-game, and in particular the triangle game, i.e. the case where $H = K_3$, has attracted a considerable amount of interest, both in the Maker--Breaker setting (here Maker picks one edge and Breaker picks $b$ edges every turn) and in the Avoider--Enforcer setting. The asymptotic behaviour of the threshold bias in the Maker--Breaker triangle game, i.e., the maximal bias $b$ for which Maker wins the triangle game on $K_n$, has been settled up to a multiplicative constant in the classical paper of Chv\'{a}tal and Erd\H{o}s \cite{ChvatalErdos1978} from 1978 (see also 
\cite[Theorem 3.1.3]{HefetzKrivStoSz2014-book}). The constant has been slightly improved by Balogh and Samotij \cite{BaloghSamotij2011}. A result of Bednarska and {\L}uczak \cite{BLuczak2000} extends \cite{ChvatalErdos1978}, determining the asymptotic behaviour of the threshold for the Maker--Breaker $H$-game for any graph $H$.
On the other hand, the Avoider--Enforcer $H$-game is much less understood, and even seemingly simple cases such as the triangle game have been wide open.

\subsection{Avoider--Enforcer $H$-game}
\label{sec:AEHgame}
Formally, we study a class of strict Avoider--Enforcer games (cf.~\cite{ttt,ae07}).
Let $H$ be a graph with at least two edges and let $n,b\in \N$. In the $(1 : b)$ \emph{strict Avoider--Enforcer $H$-game}   
played on the complete graph $K_n$, in each round the players claim previously unclaimed edges of $K_n$.
Avoider selects exactly one edge per turn and Enforcer selects exactly $b$ edges per turn. If the number of unclaimed edges is strictly less than $b$ before a move of Enforcer, then Enforcer must claim all of those edges. Avoider loses if by the end of the game she selects all edges of a copy of $H$, otherwise she wins.
Throughout the paper, for simplicity we say ``Avoider--Enforcer games'' instead of ``strict Avoider--Enforcer games",
since we do not consider the monotone version of Avoider--Enforcer games, introduced in \cite{ae10}.

Following \cite{ae10, ae07}, we consider two types of \emph{thresholds}. Given a graph $H$ with at least two edges,\footnote{If $H$ contains at most one edge, Enforcer trivially always wins, independently of the value of $b$.} {\sl the lower threshold bias} $\fl_{H}(n)$ is the largest integer such that 
for every $b\le \fl_{H}(n)$, Enforcer has a winning strategy for the $(1 : b)$  Avoider--Enforcer $H$-game on $K_n$. 
The {\sl upper threshold bias} $\fu_{H}(n)$ is the largest integer $b$ such that Enforcer wins 
the $(1 : b)$  Avoider--Enforcer $H$-game on $K_n$.
Throughout the paper the upper (lower) threshold bias is simply called the upper threshold (lower threshold, respectively). 

Our main goal in this paper is to investigate the asymptotic behaviour of $\fl_{H}(n)$ and $\fu_{H}(n)$. We always view $H$ as fixed and $n$ as a large integer (tending to infinity). 
The current state of knowledge regarding the asymptotic behaviour of $\fl_{H}(n)$ and $\fu_{H}(n)$ leaves much to be desired (cf. \cite[Section 4.6]{HefetzKrivStoSz2014-book}).
Trivially, $\fl_{H}(n) \leq \fu_{H}(n)$ always holds. 
Hefetz et al.~\cite{ae10}  showed that for the path $P_3$ 
on 3 vertices, $\fl_{P_3}(n)=\Theta(n^{3/2})$ and $\fu_{P_3}(n)=\binom n2-2$. Grzesik et al.~\cite{aestars}
generalised this result, proving that
\begin{equation}\label{thr_stars}
\fl_{S_h}(n)=\Theta\big(n^{\frac{h}{h-1}}\big)\text{ and }
\fu_{S_h}(n)=\Theta\big(n^{\frac{h-1}{h-2}}\big)
\end{equation}
for every star $S_h$ on $h\ge 4$ vertices.

To present the next set of results, we need the following definitions. Call a graph {\em unicyclic} if it is connected and contains exactly one cycle. For a non-empty graph $H$, we define the following parameters:
$$
m(H)=\max\limits_{F\subseteq H:\,v(F)\ge 1}\frac{e(F)}{v(F)}\,,\quad
m'(H)=\max\limits_{F\subseteq H:\,e(F)\ge 1}\frac{e(F)-1}{v(F)}\,,
$$
where $v(F)$, $e(F)$ are the number of vertices and edges of $F$, respectively. It is easy to see that for connected $H$, $m(H) < 1$ holds if and only if $H$ is a tree, and $m(H) = 1$ holds if and only if $H$ is unicyclic.
 
In \cite{af14} the first author showed that $\fl_{H}(n)=O(n^{1/m(H)}\ln n)$ for every graph $H$ with at least two edges. The authors of \cite{aestars} managed to remove the logarithmic factor from the previous bound when $m(H)\le 1$. In \cite{af14} the lower threshold  $\fl_{H}(n)$ was also estimated from below, but the obtained bound seemed far
from optimal. As for the upper threshold, it is known that 
\begin{equation}
\label{eqn:mprime}
\fu_{H}(n)=O(n^{\frac{1}{m'(H)}})
\end{equation} 
for every graph $H$ with at least two edges \cite{af14}. 
The authors of \cite{ae10} suspected that  
the upper and the lower thresholds are not of the same order when $H$ is connected. 
Their conjecture, which we call the \emph{separation conjecture}, is the main inspiration for our research.

\begin{conj*}[\cite{ae10}]\label{sepconj}
For every connected graph $H$ with at least two edges, one has
$$\fl_H(n)=o(\fu_H(n)).$$
\end{conj*}

In view of \eqref{thr_stars}, the conjecture is true for stars. In this paper we show that it holds for all trees with at least two edges and for all unicyclic graphs $H$ (see the remark after the statement of Theorem \ref{upperCycle}).

\subsection{Our contributions}
Our first main result is the following general upper bound on $\fl_{H}(n)$, which either extends or improves all previously known general upper bounds for $\fl_{H}(n)$.
\begin{theorem}\label{lowerH}
For every graph $H$, the lower threshold of the $H$-game satisfies
$$\fl_{H}(n)=O\big(n^{\frac{1}{m(H)}}\big).$$
\end{theorem}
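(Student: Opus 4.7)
The plan is to exhibit an explicit Avoider strategy winning the $(1:b)$ strict Avoider--Enforcer $H$-game whenever $b \ge C n^{1/m(H)}$ for some constant $C = C(H)$, which immediately gives $\fl_H(n) < Cn^{1/m(H)} = O(n^{1/m(H)})$.

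The first step is a reduction to a balanced subgraph. Choose $F \subseteq H$ minimizing $v(F)$ subject to $e(F)/v(F) = m(H)$; then $F$ is \emph{balanced}, i.e.\ every subgraph of $F$ has density at most $m(F) = e(F)/v(F) = m(H)$. Since any copy of $H$ in $K_n$ contains a copy of $F$, it suffices for Avoider to never complete any $F$-copy. (The degenerate case $e(F) = 1$, in which $H$ is a matching, reduces to keeping Avoider's edge count below $e(H)$, which for $b = \Theta(n^2)$ holds trivially.) In particular $|\mathcal{K}| = \Theta(n^{v(F)})$, where $\mathcal{K}$ is the set of labelled copies of $F$ in $K_n$.

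The main strategy is an Erd\H{o}s--Selfridge-style potential argument. Fix $\lambda = c_0 n^{-1/m(H)}$ with $c_0 > 0$ a small constant, and for each $K \in \mathcal{K}$ at a given point in the game, set
\[
w(K) = \begin{cases} \lambda^{e(F) - a(K)} & \text{if no edge of } K \text{ is claimed by Enforcer,}\\ 0 & \text{otherwise,} \end{cases}
\]
where $a(K)$ is the number of edges of $K$ already claimed by Avoider. Let $\Phi = \sum_K w(K)$. Balancedness yields $\Phi_0 = \Theta(n^{v(F)} \lambda^{e(F)}) = \Theta(c_0^{e(F)})$, so if Avoider can keep $\Phi < 1$ throughout the game then no $F$-copy will ever be completed in her graph (a completed copy would contribute weight $1$) and she wins. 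Avoider's strategy is to claim, in each round, the free edge $e$ minimizing $T(e) := \sum_{K \ni e,\,\text{safe}} \lambda^{e(F) - a(K)}$; averaging over free edges gives that such an $e$ satisfies $T(e) \le e(F) \Phi / |F_t|$, where $|F_t|$ is the number of free edges before her $t$-th move, so her move multiplies $\Phi$ by at most $1 + (\lambda^{-1}-1)\,e(F)/|F_t|$. Enforcer's moves only remove copies from the sum and thus never increase $\Phi$.

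The main obstacle is a logarithmic gap in the telescoping bound: naively
\[
\Phi_{\text{final}} / \Phi_0 \le \exp\!\biggl(\frac{e(F)}{\lambda} \sum_t \frac{1}{|F_t|}\biggr) = \exp\bigl(\Theta(\log n / (\lambda b))\bigr),
\]
which is only $O(1)$ once $\lambda b = \Omega(\log n)$, i.e.\ $b = \Omega(n^{1/m(H)} \log n)$---exactly recovering the earlier bound of \cite{af14}. Removing the $\log n$ factor is the heart of the proof and requires exploiting the cumulative effect of Enforcer's $b$ moves per round: although Enforcer adversarially minimizes the decrease she causes, the identity $\sum_{e\text{ free}} T(e) \le e(F) \Phi$ limits how many ``cold'' edges with $T(e) \ll \Phi/|F_t|$ can coexist, so a fixed fraction of Enforcer's $b$ edges each round must destroy non-negligible weight. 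A careful second averaging step, possibly combined with a phase-based analysis in which $\lambda$ is re-tuned as $|F_t|$ shrinks (and with a bookkeeping of ``cold'' edges created by Enforcer), should balance Avoider's increase against Enforcer's forced decrease and close the log gap, extending the technique developed in \cite{aestars} for the range $m(H) \le 1$.
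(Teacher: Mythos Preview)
Your proposal has a genuine and fatal gap at the very first step. You aim to show that Avoider wins the strict $(1:b)$ $H$-game for \emph{every} $b \ge C n^{1/m(H)}$. This statement is false. Strict Avoider--Enforcer games are not bias-monotone: the upper threshold $\fu_H(n)$ can be polynomially larger than $\fl_H(n)$ (this is exactly the content of the separation conjecture and of Theorem~\ref{upperCycle}). For instance, when $H$ is a tree on $h\ge 4$ vertices we have $n^{1/m(H)} = n^{h/(h-1)}$, yet by Theorem~\ref{upperCycle}(\ref{upper_tree}) there exist biases $b = \Theta(n^{(h-1)/(h-2)}) \gg n^{h/(h-1)}$ for which \emph{Enforcer} wins. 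So no potential argument, however refined, can establish an Avoider win for all $b$ above $Cn^{1/m(H)}$; the target is simply not true. Your ``second averaging step'' paragraph is in any case only a sketch of a hope, not an argument, and the claim that Enforcer must destroy non-negligible weight each round is exactly what fails for the bad values of $b$.

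The paper's proof exploits the non-monotonicity rather than fighting it. To bound $\fl_H(n)$ it suffices to exhibit \emph{one} value of $b$ with $b = O(n^{1/m(H)})$ for which Avoider wins. The paper picks $F\subseteq H$ with $e(F)/v(F)=m(H)$, sets $q=\lceil e(F)\,n^{1/m(H)}\rceil$, and then uses a number-theoretic lemma (Lemma~\ref{bigrem}) to find an integer $k$ with $q<k\le Cq$ such that the remainder of $\binom{n}{2}$ upon division by $k$ exceeds $q$. Taking $b=k-1$, Avoider claims only about $q/(b+1)$ of the edges per block and in total roughly $\binom{n}{2}/(b+1)\approx q$ edges, few enough that the criterion of Lemma~\ref{avoiderwin} (an Erd\H{o}s--Selfridge-type condition tailored to the strict game) applies and Avoider avoids $F$. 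The entire logarithmic obstacle you identify is sidestepped by this single well-chosen $b$; no phase analysis or sharpened potential bound is needed.
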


We learn from \eqref{thr_stars} that the upper bound in Theorem \ref{lowerH} is tight for stars.
Our next result generalises this to a much larger class of graphs. 
\begin{theorem}\label{lowerm1}
If $H$ is a graph with at least two edges and  $m(H)\le 1$, then  $\fl_H(n)=\Omega(n^{\frac{1}{m(H)}})$.
\end{theorem}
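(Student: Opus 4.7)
The plan is to construct an Enforcer winning strategy for $b \le c \cdot n^{1/m(H)}$ with a small constant $c = c(H) > 0$, establishing $\fl_H(n) \ge c \cdot n^{1/m(H)}$. I first reduce to the case where $H$ is connected: since $m(H)$ is attained by some connected subgraph $H_0 \subseteq H$, forcing Avoider to contain $H_0$ (and, by repeating the strategy on disjoint vertex sets, the smaller components of $H$) suffices. Thus WLOG $H$ is either a tree with $k \ge 2$ edges (so $m(H) = k/(k+1)$) or unicyclic (so $m(H) = 1$).

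The core idea is a pairing/box-game approach inspired by the treatment of stars in \cite{aestars}. For the unicyclic case ($b \le cn$), Enforcer partitions $E(K_n)$ into $T \approx \binom{n}{2}/(b+1) \ge n/(2c)$ groups of $b+1$ edges and responds to each Avoider move by claiming the remaining $b$ edges of the corresponding group. Avoider's final graph is then a transversal of $T > n$ edges, and hence automatically contains a cycle. By choosing the partition carefully---based on the specific vertex-structure of the target unicyclic $H$---one ensures that any transversal of this size contains a copy of the full $H$, not merely some arbitrary cycle. This construction relies on extremal properties of cycle-rich graphs with prescribed pendant structure.

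The tree case requires more delicate handling because $T = \Theta(n^{(k-1)/k}) < n$, so transversals need not contain any cycle, and the simple ``enough edges to force a cycle'' argument fails. Here Enforcer instead bootstraps the star-game strategy of \cite{aestars} iteratively: she first forces Avoider to have a vertex of Avoider-degree at least $\Delta(H)$, yielding a large star subgraph of Avoider's graph, and then in $O(e(H))$ further phases, each a separate box-game on residual edges, she forces pendant Avoider-edges that extend the star into the specific tree $H$. The principal obstacle is to orchestrate these phases so that no phase destroys the structure built in earlier ones, and so that the total budget consumed remains $O(n^{1/m(H)})$. The bookkeeping---balancing the per-phase budgets across the $O(e(H))$ extensions---is where the number-theoretic tools promised in the abstract are likely to enter.
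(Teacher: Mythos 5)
The central difficulty of Theorem~\ref{lowerm1} is left unresolved in your sketch, and the reductions you propose do not actually go through.

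First, your reduction to the connected case is backwards. If $H_0 \subseteq H$ is a connected subgraph attaining $m(H)$, then forcing Avoider to build a copy of $H_0$ does not force a copy of $H$: Enforcer must create the \emph{entire} graph $H$ in Avoider's graph, not just the densest piece. (For a tree $H$, indeed $H_0 = H$; but for a unicyclic $H$, $H_0$ is the cycle, and forcing a cycle is strictly weaker than forcing $H$.) The paper handles this in the opposite order: it first establishes the result for each \emph{connected} $H$ with $m(H) \le 1$ (Theorems~\ref{lowerTree}, \ref{thm:odd_cycle}, \ref{thm:even_cycle}), then for disconnected $H$ runs $t + \binom{t}{2}$ parallel games on a vertex partition, forcing one full component of $H$ in each diagonal part.

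Second, your box-game strategy for unicyclic $H$ does not yield a copy of $H$. Claiming ``any transversal of this size contains a copy of the full $H$'' is the entire content of the theorem in this case, and it is false for arbitrary partitions into $(b+1)$-blocks: a transversal of more than $n$ edges must contain \emph{some} cycle, but there is no reason it contains the specific cycle length and pendant tree structure of $H$. The paper instead proves a supersaturation lemma (Lemma~\ref{lem:threat_count}) counting pairs $\{x,y\}$ whose addition creates an $H$-copy with $\{x,y\}$ as a cycle-edge, and has Enforcer partition the \emph{vertices} into $X,Y$ and claim only cross-edges; the parity of the cycle in $H$ then becomes crucial, because in the odd case the cycle-threats supplied by the lemma automatically land inside $X$ or inside $Y$, while in the even case a substantially more elaborate three-stage strategy built around a tailored tree $T(H)$ is needed (Lemma~\ref{lem:disj_copies}, Claims~\ref{claim:max_degree}--\ref{claim:tcopies}). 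Your proposal ignores this odd/even dichotomy entirely, and it is not a technicality.

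Third, the tree case sketch is too vague to assess, but it does not resemble the actual argument. The paper does not bootstrap a sequence of star box-games; it plays the game inside a blow-up $\B_H(V_1,\dots,V_h)$ of $H$ and proceeds by induction on $v(H)$, with a matching/contraction step (Lemma~\ref{blowup_matching}) that reduces the target graph to a smaller tree or unicyclic multigraph, and a base case for $C_2$ (Lemma~\ref{blowup_triangleM}). Iterating disjoint ``phases'' of a box-game without an inductive framework of this kind gives no control over how earlier phases interact with later ones.

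Finally, you expect the number-theoretic lemmata to play a role here, but they do not. They enter only on the Avoider side (Theorem~\ref{lowerH}, via Lemma~\ref{bigrem}) and in the upper-threshold bounds (Theorem~\ref{upperCycle}, via Lemmata~\ref{div} and~\ref{div2}), where one needs $\binom{n}{2} - t$ divisible by $b+1$ to invoke Fact~\ref{fact:threats}. In the proof of Theorem~\ref{lowerm1}, Enforcer's goal is merely to create at least $b+1$ threats, and Fact~\ref{fact:threats2} requires no divisibility hypothesis.
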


Theorems \ref{lowerH} and \ref{lowerm1} together imply $\fl_H(n)=\Theta(n^{\frac{1}{m(H)}})$ for every graph $H$ with at least two edges and $m(H)\le 1$. Amongst other things, the proof of Theorem~\ref{lowerm1} uses a supersaturation-like result (see Lemma \ref{lem:threat_count}), which might be of independent interest.

The heart of the proof of Theorem \ref{lowerm1} for trees is the following result, which states that under some mild conditions on the bias $b$, Enforcer can force Avoider to make many threats. Below, an edge $e\in E(K_n)$ is called an {\it $H$-threat} if $e$ has not been taken by the players, and adding $e$ to Avoider's graph would create a new copy of $H$ in her graph.

\begin{theorem}\label{upper_strategy}
	Let $H$ be a tree or a unicyclic graph, with at least two edges. Then there exists a constant $\gamma=\gamma(H)\in (0,1)$ such that   
	in the $(1:b)$ Avoider--Enforcer $H$-game played on $K_n$ the following holds. 
	{If $8v(H)n\le b+1\le \gamma n^{ e(H)/(e(H)-1)}$, and if furthermore $b$ is even whenever $H$ is unicyclic}, then Enforcer has a strategy in which, 
	at some point, either he has already won or the number of $H$-threats is greater than 
	$\gamma^{e(H)-1}  n^{v(H)} / (b+1)^{e(H)-1}$.
	The above is also true if in the first round Avoider is allowed to select any number of edges she wishes (possibly none) while Enforcer has to select 
exactly $r$ edges for some fixed number $r\in \{0,1,\ldots,b\}$.
\end{theorem}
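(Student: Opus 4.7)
The plan is to have Enforcer play a near-random strategy and then count almost-copies of $H$ in Avoider's graph via a supersaturation argument. Write $v = v(H)$, $h = e(H)$, and $t = b+1$. Enforcer's nominal strategy is to claim $b$ uniformly random unclaimed edges each round; the probabilistic analysis can then be derandomised by the method of conditional expectation. The analysis terminates at round $k^* := \lfloor \alpha n^2/t \rfloor$ for a small $\alpha = \alpha(H) > 0$. If Avoider has already lost by this point we are done; otherwise she has exactly $k^*$ edges, and the hypothesis $t \leq \gamma n^{h/(h-1)}$ ensures $k^* \geq c \cdot n^{(h-2)/(h-1)}$, which is the density window where supersaturation for near-copies of $H$ takes effect.

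Fix a distinguished edge $e^* \in E(H)$: a leaf edge if $H$ has one, and an arbitrary cycle edge otherwise. Set $F := H - e^*$. The crux of the argument is a supersaturation estimate (essentially Lemma \ref{lem:threat_count} in the present regime): regardless of Avoider's strategy, as long as $G_A^{(k^*)}$ is $H$-free, it contains at least $c_1 \cdot n^{v(F)}/t^{e(F)}$ copies of $F$, matching the Kruskal--Katona heuristic for a graph of density $\Theta(1/t)$.

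From the $F$-copies one reads off candidate threats. For trees, each copy extends to $H$ by adding an edge from a distinguished vertex of the copy to any of the $\sim n$ fresh vertices, producing $\sim n^v/t^{h-1}$ candidate edges; for unicyclic $H$ the extension edge is uniquely determined by the copy, and here $v(F) = v$, so again the candidate count is $\sim n^v/t^{h-1}$. After dividing by a bounded multiplicity (at worst $|\mathrm{Aut}(H)|$), at least $c_2 \cdot n^v/t^{h-1}$ \emph{distinct} edges arise as candidates. Under random play, Enforcer claims any specific fixed edge with probability $O(k^* b/n^2) = O(\alpha)$ by round $k^*$, so a positive constant fraction of candidates remain unclaimed and are genuine $H$-threats. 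Choosing $\gamma$ small relative to $c_2$ then yields the required lower bound $\gamma^{h-1} n^v/t^{h-1}$, and the first-round flexibility is absorbed by starting the counting from the configuration right after that round.

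The main obstacle is the supersaturation step, since Avoider will try to concentrate her edges inside a small vertex set where $F$-copies are scarce (for instance, a matching when $H = P_4$). To rule this out, Enforcer's actual strategy must go beyond pure random play and actively claim many edges \emph{inside} the current vertex-span of Avoider's graph, forcing her to spread out; the lower bound $b+1 \geq 8v(H) n$ provides precisely the budget required for this spreading to proceed on schedule. The unicyclic case is the harder one because $F$ is then a spanning tree of $H$, on $v$ vertices with $v-1$ edges, which makes the copy-counting more delicate; the even-$b$ hypothesis lets Enforcer group his moves into $b/2$ pairs and execute a pairing-style refinement of the random strategy that controls the over-counting and delivers the final lower bound.
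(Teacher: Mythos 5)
Your central claim---that a supersaturation estimate (``essentially Lemma \ref{lem:threat_count} in the present regime'') forces Avoider's graph at round $k^*$ to contain $\sim n^{v(F)}/t^{e(F)}$ copies of $F = H - e^*$---does not hold at the density scale of Theorem~\ref{upper_strategy}. Under the hypothesis $b+1 \ge 8v(H)\,n$, Avoider claims at most $\binom{n}{2}/(b+1) = O(n/v(H))$ edges in the whole game. Since $F$ is a tree on at least two vertices, a graph on $n$ vertices with $O(n)$ edges can be entirely free of $F$-copies (a matching kills $P_3$, disjoint small cliques kill larger trees), so no unconditional supersaturation statement can deliver the needed copy count. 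Lemma~\ref{lem:threat_count} itself illustrates the problem: its guarantee is $e(G) - (h-2)v(G)$, which is negative here, and indeed that lemma is used in the paper only in the regime $b+1 \le n/(12h)$ of Theorems~\ref{thm:odd_cycle} and~\ref{thm:even_cycle}, not in Theorem~\ref{upper_strategy}. You do flag this as ``the main obstacle,'' but the fix you sketch (Enforcer claims edges inside Avoider's vertex span to make her ``spread out'') is not enough: spreading Avoider's edges out makes things worse, not better, for creating $F$-copies, and in any case it is not spelled out how this yields any $F$-copy count at all. The derandomisation via conditional expectations is an additional unresolved issue, since Avoider is an adversary rather than a fixed random process.

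The paper's actual argument is structurally different. Enforcer partitions $V(K_n)$ into $v(H)$ parts, restricts play to the blow-up $\B_H(V_1,\dots,V_h)$ by claiming everything outside it, and then proves Proposition~\ref{blowup_unic} by induction on $v(H)$. The inductive step (Lemma~\ref{blowup_matching}) is a very concrete deterministic blocking strategy: after each Avoider move Enforcer claims all incident edges in the same bipartite class (so Avoider's green edges form a matching there), separates green components, and dumps leftover moves into a dedicated ``dustbin''; this forces a large family of vertex-disjoint canonical copies of a subtree $F_s \subseteq H$, which Enforcer then contracts to reduce to a game on a blow-up of a smaller (multi)graph $H'$. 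The even-$b$ hypothesis enters precisely in the base case $H' = C_2$ (Lemma~\ref{blowup_triangleM}), where Enforcer pairs parallel edges as ``friends'' and takes couples---this is close in spirit to your pairing suggestion, but it lives inside the blow-up induction, not a random strategy. None of the blow-up structure, the canonical-copy bookkeeping, or the induction on $v(H)$ appears in your proposal, and without some replacement for them the supersaturation route stalls at the very first step.
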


Interestingly, Theorem \ref{upper_strategy} can be proved by induction on 
{$v(H)$}. Besides its 
{use} in proving Theorem \ref{lowerm1}, Theorem \ref{upper_strategy} in conjunction with  some number-theoretic results gives the following lower bound on the upper threshold of trees and unicyclic graphs. 

\begin{theorem}\label{upperCycle}
For any connected graph $H$ with at least three edges the following holds.
\begin{enumerate}[{\rm (i)}]
\item \label{upper_tree}
If $H$ is a tree, then $\fu_{H}(n)=\Omega\big(n^{ \frac{v(H)-1}{v(H)-2}}\big)$. 
\item \label{upper_item1}
If $H$ is unicyclic, then $\fu_{H}(n)=\Omega\big(n^{ \frac{v(H)+2}{v(H)+1}}\big)$. 
\item \label{upper_item2}
If $H$ is unicyclic, 
then $\fu_{H}(n) \geq c_H n^{\frac{v(H)}{v(H)-1}}$ for infinitely many values of $n$, where $c_H > 0$ depends only on $H$.
\end{enumerate}
\end{theorem}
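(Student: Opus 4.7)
The plan is to deduce all three parts of Theorem~\ref{upperCycle} from Theorem~\ref{upper_strategy} together with an endgame analysis that converts ``many threats'' into ``Avoider is forced to take one''. In each part Enforcer picks a bias $b$ near the upper end of the range permitted by Theorem~\ref{upper_strategy}: $b+1\asymp n^{(v(H)-1)/(v(H)-2)}$ in (i), since for a tree $e(H)=v(H)-1$; $b+1\asymp n^{v(H)/(v(H)-1)}$ chosen \emph{even} in (iii); and a smaller even $b+1\asymp n^{(v(H)+2)/(v(H)+1)}$ in (ii). Substituting into the guarantee of Theorem~\ref{upper_strategy}, the number of $H$-threats at the critical moment $\mathcal{M}$ is at least $T_0:=\gamma^{e(H)-1}n^{v(H)}/(b+1)^{e(H)-1}$, which evaluates to $\Omega(n)$ in (i), $\Omega(1)$ in (iii), and $\Omega(n^{2/(v(H)+1)})$ in (ii).

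\textbf{Preservation phase.} After moment $\mathcal{M}$, Enforcer switches to a preservation strategy: each round he claims $b$ edges that are not $H$-threats, whenever possible. Because Enforcer's moves do not affect Avoider's graph, they can never destroy threats, while Avoider's moves can only create new ones. Consequently the pool of unclaimed non-threats $M$ shrinks by exactly $b+1$ per round while the threat pool $T$ only grows. Let $R:=M_\mathcal{M}\bmod (b+1)$. A direct round-by-round analysis shows that Enforcer wins whenever $R=0$ (the non-threats vanish precisely at the start of some Avoider turn, forcing her to claim a threat) or $T_0\ge b+2-R$ (at the critical round Enforcer is forced to mop up $b+1-R$ threats, but enough survive to trap Avoider on her next move).

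\textbf{Handling the residue.} Since $M_\mathcal{M}\equiv \binom{n}{2}-T_\mathcal{M}-E_\mathcal{M}\pmod{b+1}$, where $E_\mathcal{M}$ is the number of claimed edges up to $\mathcal{M}$, the residue $R$ depends on $n$, $b$, and on the state reached by the strategy of Theorem~\ref{upper_strategy}. The first-round clause of that theorem, allowing Enforcer to fix an initial count $r\in\{0,\dots,b\}$, provides extra flexibility for nudging $E_\mathcal{M}$ modulo $b+1$. In part (i) the acceptable window has size $T_0=\Omega(n)$ out of $b+1=O(n^{(v(H)-1)/(v(H)-2)})$ residues, which together with the first-round shift can be realised for every large $n$. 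In part (ii) the bias is deliberately taken smaller so that $T_0=\Omega(n^{2/(v(H)+1)})$ matches the modular wiggle-room, guaranteeing an admissible residue for \emph{every} $n$. In part (iii) the constant-sized $T_0$ essentially forces $R=0$ exactly, turning the question into a genuinely number-theoretic one: produce an even $b\asymp n^{v(H)/(v(H)-1)}$ for which $\binom{n}{2}-T_\mathcal{M}$ is divisible by $b+1$. Using the paper's promised number-theoretic toolkit, this divisibility can be certified for infinitely many $n$, which is exactly the ``infinitely many values of $n$'' asserted in (iii).

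\textbf{Main obstacle.} The hardest step is the endgame calibration in (iii): with only a constant number of threats we cannot afford any slack, and an exact congruence modulo $b+1$ must be arranged compatibly with the parity constraint on $b$ and the magnitude $b\asymp n^{v(H)/(v(H)-1)}$. Parts (i) and (ii) are, by comparison, routine modular bookkeeping once Theorem~\ref{upper_strategy} is in hand, with the smaller bias in (ii) chosen precisely to exchange the number-theoretic restriction for a weaker exponent that holds uniformly in $n$.
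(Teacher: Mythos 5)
Your high-level plan (apply Theorem~\ref{upper_strategy} with a bias near the top of its admissible range, then run an endgame on the surviving threats) matches the paper's, and your round-by-round preservation analysis essentially re-derives Facts~\ref{fact:threats} and~\ref{fact:threats2}. However, the crucial step of \emph{arranging the residue} is where the argument has a real gap, in two places.

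First, the ``first-round shift'' does not exist. The last sentence of Theorem~\ref{upper_strategy} is a robustness clause: it says the conclusion holds for \emph{any} fixed $r\in\{0,\dots,b\}$, which is needed because Enforcer arrives at Stage II in the middle of a move; it is not a knob Enforcer gets to turn. So Enforcer has no way to ``nudge $E_{\mathcal M}$ modulo $b+1$'', and since the number of threats $T_{\mathcal M}$ depends on Avoider's play as well, the residue $R\equiv\binom n2 - T_{\mathcal M}\pmod{b+1}$ is not a quantity Enforcer controls. Your preservation-phase lemma is fine, but it gives a \emph{sufficient condition on $R$}, not a procedure for achieving it.

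Second, and more fundamentally, for parts~(i) and~(ii) you treat the number theory as dispensable (``routine modular bookkeeping''), whereas in the paper it is exactly as essential there as in part~(iii). The paper handles the residue by \emph{choosing $b$}: Lemma~\ref{div2} produces, for every large $n$, a bias $b+1$ of the right magnitude (and of the right parity in the unicyclic case) together with a small $t$ for which $\binom n2-t$ is divisible by $b+1$, so that Fact~\ref{fact:threats} applies once the threat count exceeds $t$. This is why part~(ii) deliberately lowers the bias exponent to $\frac{v(H)+2}{v(H)+1}$: with $\alpha=\frac{h+2}{h+1}$, Lemma~\ref{div2} yields $t\le c^2n^{2/(h+1)}$, and the threat bound $T_0=\Omega\bigl(n^{2/(h+1)}\bigr)$ is calibrated to exceed exactly this $t$, not to soak up an unknown residue. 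Your window argument, made rigorous, would have to produce for each $n$ a bias whose residue class lands in the good window, and that is precisely a (slightly disguised) restatement of Lemma~\ref{div2}; there is no shortcut around it. Only part~(iii), where $t=1$ forces $b+1\mid\binom n2-1$ and Lemma~\ref{div} only delivers infinitely many $n$, is identified correctly as genuinely number-theoretic in your write-up, but all three parts need the appendix lemmata.
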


Part (\ref{upper_item2}) shows that the upper bound in \eqref{eqn:mprime} is tight for every unicyclic graph. Theorem~\ref{lowerH} together with Parts (\ref{upper_tree}) and (\ref{upper_item1}) prove the separation conjecture for all connected graphs $H$ with at least two edges and with $m(H) \leq 1$. 

\begin{cor}\label{cor:separation}
The separation conjecture holds for every graph $H$ which is either unicyclic or a tree with at least two edges. 
\end{cor}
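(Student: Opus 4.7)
The plan is to derive the corollary directly by combining the general upper bound on $\fl_H(n)$ from Theorem~\ref{lowerH} with the lower bounds on $\fu_H(n)$ from Theorem~\ref{upperCycle}, and verifying in each case that the two exponents strictly separate.

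First I would note that trees and unicyclic graphs both satisfy $m(H)\le 1$: for a tree $T$ with at least two vertices, $m(T)=(v(T)-1)/v(T)$, while for a unicyclic graph $U$ every proper subgraph is a forest and so $m(U)=1$. Hence Theorem~\ref{lowerH} yields $\fl_H(n)=O(n^{v(H)/(v(H)-1)})$ in the tree case and $\fl_H(n)=O(n)$ in the unicyclic case.

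I would then split into cases for the matching lower bound on $\fu_H(n)$. If $H$ is a tree with at least three edges, so $v(H)\ge 4$, then Theorem~\ref{upperCycle}(\ref{upper_tree}) gives $\fu_H(n)=\Omega(n^{(v(H)-1)/(v(H)-2)})$, and the required separation reduces to the algebraic inequality $v(H)/(v(H)-1)<(v(H)-1)/(v(H)-2)$, which after cross-multiplying is simply $0<1$. The only tree not covered by this range is $P_3$, for which the known result $\fu_{P_3}(n)=\binom{n}{2}-2=\Theta(n^2)$ of Hefetz et al.\ cited in the introduction comfortably beats $\fl_{P_3}(n)=O(n^{3/2})$. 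For unicyclic $H$, Theorem~\ref{upperCycle}(\ref{upper_item1}) gives $\fu_H(n)=\Omega(n^{(v(H)+2)/(v(H)+1)})$, whose exponent strictly exceeds $1$, so polynomial separation against $\fl_H(n)=O(n)$ follows. One should deliberately use part~(\ref{upper_item1}) rather than the stronger-looking part~(\ref{upper_item2}), since the latter only guarantees the bound for infinitely many $n$, whereas the conjecture asks for $\fl_H(n)=o(\fu_H(n))$ as $n\to\infty$.

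Because each case reduces to a routine comparison of polynomial exponents, there is no real obstacle in the argument itself---all the substantive work already lives in the preceding theorems. The separation obtained is always polynomial, of the form $\fl_H(n)=O(n^{-c(H)}\fu_H(n))$ for some constant $c(H)>0$ depending only on $H$.
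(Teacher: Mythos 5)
Your proof is correct and follows the same route the paper intends: combine the upper bound $\fl_H(n)=O(n^{1/m(H)})$ from Theorem~\ref{lowerH} with the lower bounds of Theorem~\ref{upperCycle}(\ref{upper_tree}) and (\ref{upper_item1}) on $\fu_H(n)$, and verify the exponent inequalities. You are right to flag that Theorem~\ref{upperCycle} requires $e(H)\ge 3$, so $P_3$ must be handled separately via the result of Hefetz et al.\ cited in the introduction; the paper's remark that Parts~(\ref{upper_tree}) and (\ref{upper_item1}) cover all connected $H$ with $m(H)\le 1$ and at least two edges elides this edge case, and your more careful accounting (together with the correct observation that Part~(\ref{upper_item2}) cannot be used because it holds only along a subsequence) is a small but genuine improvement in precision over the paper's terse derivation.
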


As opposed to the lower threshold, the upper threshold for the $H$-game is affected by the number of components.
For simplicity, the statement below is given for graphs $H$ with $m(H) = 1$, but similar results hold for forests.

\begin{theorem}\label{manyUnic}
Let $H$ be a disconnected graph with $m(H)=1$ and at least two unicyclic components. Then 
\[
\fu_{H}(n)=O(n).
\]
\end{theorem}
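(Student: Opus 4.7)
The plan is to give Avoider a winning strategy when $b \ge C n$ for a sufficiently large constant $C = C(H)$. The key structural observation is that, since $H$ has at least two unicyclic components, any copy of $H$ in Avoider's graph $G_A$ must contain two vertex-disjoint cycles (one taken from each unicyclic component of $H$). Hence it suffices for Avoider to ensure that $G_A$ has at most one cycle in total, i.e.\ cyclomatic number at most one.

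Avoider's strategy is to play, at each turn, an \emph{inter-component} edge of her current graph (an edge whose endpoints lie in distinct components), preferring edges between two isolated vertices so as to maintain a matching for as long as possible. Any inter-component move preserves the forest structure of $G_A$. Therefore, if she can play an inter-component edge at every turn but possibly the last, her final graph is a forest plus at most one edge, hence has at most one cycle and contains no copy of $H$.

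To check feasibility, let $T \le n/(2C)$ be the total number of turns. At turn $t$ the number of untaken inter-component edges is at least $\binom{n}{2} - \sum_i \binom{n_i}{2} - b(t-1)$, where the $n_i$ are the current component sizes of $G_A$. Using $(b+1)T \ge \binom{n}{2} - b$, this quantity is positive whenever $\sum_i \binom{n_i}{2} \le b(T-t+1) + T - 1$, which is at least $b + T - 1 = \Theta(Cn)$ for all $t \le T$. If Avoider maintains maximum component size at most $L = 2C$, then $\sum_i n_i^2 \le L \sum_i n_i = Ln$ yields $\sum_i \binom{n_i}{2} \le Ln/2 = Cn$, meeting the requirement. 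For $C$ large, matching edges are available for all but the last $O(n/C^2)$ turns; during those final turns the greedy ``merge the two smallest components'' rule keeps the maximum component size bounded by $O(C)$.

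The main obstacle is proving this last claim: under adversarial play, Enforcer can deplete the supply of matching edges between pairs of isolated vertices and try to force Avoider to merge large components. Handling the end-of-game phase requires careful bookkeeping---tracking how Enforcer distributes his edges among isolated-isolated, isolated-matched, and matched-matched pairs, and exploiting that the non-matching phase lasts only $O(n/C^2)$ turns while Avoider still retains $n - O(n/C)$ isolated vertices as potential merging partners throughout.
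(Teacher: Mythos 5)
Your structural observation --- that a copy of $H$ requires two vertex-disjoint cycles, so it suffices for Avoider to keep her graph a forest until her final move --- is correct and is precisely the key observation the paper uses. The gap, which you flag yourself, is the unproven claim that a greedy ``merge the two smallest components'' rule keeps the maximum component size $O(C)$ through the final $\Theta(n/C^2)$ turns. That claim is the entire substance of the argument and cannot be deferred: if Enforcer manages to restrict Avoider so that all her available inter-component edges are incident to one fixed component, that component gains a vertex each turn and can reach size $\Theta(n/C^2) = \omega(C)$, making $\sum_i \binom{n_i}{2} = \Omega(n^2/C^4)$, which exceeds your feasibility threshold $\Theta(Cn)$ once $n \gg C^5$. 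Whether Enforcer can actually force such a concentration is itself delicate: by the transition out of the matching phase he has placed about $n^2/2$ edges (nearly all isolated--isolated pairs), and the roughly $n^2/C$ isolated--matched pairs are of the same order as what he can still claim during the remaining $\Theta(n/C^2)$ turns. So the competition is genuinely tight, and ``careful bookkeeping'' without a potential function or weighting scheme is unlikely to resolve it.

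The paper bypasses this difficulty entirely by invoking Lemma~\ref{avoiderwin2} (from \cite{af14}) as a black box. Applied to the family $\{F_1,\dots,F_t\}$ of the unique cycles of the unicyclic components, it yields, once $b > cn$, an Avoider strategy keeping her graph $F_i$-free for every $i$ until her last move; any copies of $F_1$ and $F_2$ in her final graph must then share that last edge, so there are no vertex-disjoint copies of $H_1$ and $H_2$ and hence no copy of $H$. That lemma is exactly the weighted-potential machinery your direct strategy would need to develop from scratch, and I recommend either invoking it or proving an analogue before the argument can be considered complete.
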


Theorems~\ref{lowerm1} and \ref{manyUnic} imply that $f^{+}_H(n)=\Theta(f^{-}_H(n))=\Theta(n)$ for every disconnected graph $H$ with $m(H)=1$ and at least two unicyclic components. Therefore, for disconnected graphs there is no asymptotic separation between the lower and upper thresholds in general.

\subsection{Organisation}
The paper is organised as follows. In Section \ref{prelim} we introduce basic notation and simple facts that will be used throughout. Section~\ref{sec:short-proofs} is dedicated to the proofs of Theorems~\ref{lowerH}, \ref{upperCycle} and \ref{manyUnic}. We establish Theorem \ref{lowerm1} in Section \ref{sec:lowerm1}.
In Section~\ref{sec:blowup} we prove Theorem \ref{upper_strategy} by analysing games on blow-ups of multigraphs. Section~\ref{sec:conc_remarks} contains concluding remarks, including open questions and conjectures regarding the threshold biases for the $H$-game for every $H$. The appendix is devoted to the proofs of all number theoretic lemmata which are required in the proofs of the main theorems.

\section{Preliminaries}\label{prelim}
For a natural number $k$ we denote the set $\{1,2,\ldots, k\}$ by $[k]$.
For a graph $G=(V(G),E(G))$ and a set of vertices $S \subseteq V(G)$ we denote 
$E(S) = \{e \in E(G)\colon e \subseteq S\}$. For a pair of sets 
$S,T \subseteq V(G)$, let 
$E(S,T):= \big\{ \{s,t\}\in E(G)\colon s \in S, t \in T\big\}$. 
The degree of a vertex $v$ in $G$ is denoted by $d_G(v)$; $N_G(v)$ is the set of  neighbours of $v$.
We write $G[S]$ for the subgraph of $G$ induced by $S$.
We denote by 
$\Delta(G)$ and $\delta(G)$ the maximum degree and minimum degree in $G$, respectively. 

We defined Avoider--Enforcer $H$-games as games played on $K_n$. However, in subsequent
sections we will consider auxiliary Avoider--Enforcer games on other graphs. 
Therefore in all definitions below we assume that the game is played on the edge-set of a graph $F=(V(F),E(F))$. 
We call $E(F)$ the \emph{board of the game}. 
At any point in the game, the graphs $G_A$ and $G_E$ are spanning subgraphs of $F$, and their edge-sets consist
of all edges picked by Avoider and Enforcer, respectively, up to this point.  
We say that an edge $e\in E(F)$ is \emph{free} if $e \notin E(G_E) \cup E(G_A)$.
An edge $e\in E(F)$ is an \emph{$H$-threat} or simply a \emph{threat} if it is free and there exists an $H$-copy 
in $G_A \cup \{e\}$ that is not contained in $G_A$.

By default we assume that Avoider starts the game. Nonetheless, sometime we change this rule, but then
we explicitly state that Enforcer is the first player. The following two facts 
are well-known and frequently used when studying Avoider--Enforcer games. 
We will use them often in the next sections.

\begin{fact}\label{fact:threats}
Suppose that $t\ge 1$, $\binom{n}{2} - t$ is divisible by $b+1$ and the $(1:b)$ Avoider--Enforcer $H$-game 
is being played on $K_n$. If at some point of the game, the number of $H$-threats is at least $t$, then Enforcer has a strategy to win the game from this point on.
\end{fact}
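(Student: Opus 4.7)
My proposal is to give Enforcer the following strategy: on each of his turns, claim $b$ free edges that are not currently $H$-threats whenever at least $b$ non-threat free edges remain, and otherwise claim all remaining non-threats and fill the rest with threats (or claim all remaining free edges, if fewer than $b$ are left). As a preliminary observation I would first record that threats are monotone non-decreasing in time: if $e$ is a threat at some moment $\tau$ and remains free, then $e$ is a threat at every later moment, simply because the new $H$-copy in $G_A \cup \{e\}$ witnessing that $e$ is a threat at $\tau$ survives when $G_A$ grows. Consequently, from the crucial moment onward, as long as Enforcer has claimed no threats, the number of threats stays at least $t$.

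The analysis splits into two cases. In the \emph{feasible} case, Enforcer never has to claim a threat; since every edge is eventually claimed and the $\ge t \ge 1$ threats persist until somebody takes them, Avoider must claim at least one of them, and any Avoider claim of a (currently) threat edge completes an $H$-copy, making her lose. In the \emph{failure} case, let the failing turn be the first Enforcer turn at which fewer than $b$ non-threats are free; there Enforcer claims all remaining non-threats and fills up to $b$ with threats. After his move no free non-threat remains, so if any free edge remains at all it is a threat, and Avoider's next move is forced to be a threat and she loses. Thus it only remains to rule out the subscenario where the game terminates exactly on Enforcer's failing turn.

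This is where the divisibility hypothesis enters. Just before any Enforcer turn, Avoider has made exactly one more move than Enforcer (because Avoider plays first in every round), so the total number of claimed edges is $\equiv 1 \pmod{b+1}$ and the number of free edges $F$ satisfies $F \equiv \binom{n}{2} - 1 \equiv t - 1 \pmod{b+1}$. For the game to end right after Enforcer's failing turn we would need $F = b$, which forces $t \equiv 0 \pmod{b+1}$ and hence $t \ge b+1$; but at that very moment the number of threats is at most $F = b$ while being at least $t \ge b + 1$, a contradiction. A similar parity check rules out the degenerate possibility $F < b$ at a failing turn: the residue of $F$ together with $F \ge T \ge t \ge 1$ forces $F \ge t + b > b$.

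The main obstacle I anticipate is not the feasibility case, which is essentially immediate from the monotonicity of threats, but the precise bookkeeping of the failure case. One has to identify that the only way Avoider could escape is a single termination scenario on an Enforcer turn where the board is exhausted, and then observe that this scenario is exactly what the divisibility hypothesis is engineered to preclude via the simple congruence $F \equiv t - 1 \pmod{b+1}$.
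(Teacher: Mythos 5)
The paper does not prove this fact; it cites Facts~\ref{fact:threats} and \ref{fact:threats2} as well-known and uses them as black boxes, so there is no in-paper argument to compare against. Your argument is correct and is the standard folklore proof: Enforcer dodges threats, threats are monotone while free, and the congruence $F\equiv t-1\pmod{b+1}$ holding before every Enforcer turn (since the number of claimed edges there is $i+(i-1)b\equiv 1$) shows that once $F\ge T\ge t$ one has $F\ge t+b>b$, so the board can never be exhausted on an Enforcer move while the $\ge t$ threats are still free, forcing Avoider onto a threat.

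One small point is stated slightly too strongly and should be patched: the sentence ``from the crucial moment onward, as long as Enforcer has claimed no threats, the number of threats stays at least $t$'' also needs the hypothesis that \emph{Avoider} has claimed no threats, since an Avoider claim removes a free threat and can in principle lower the count. This is harmless --- the moment Avoider claims a threat her graph contains a copy of $H$ and Enforcer has already won, so one may restrict attention to plays in which Avoider never does --- but it should be said explicitly, because the bound $T\ge t$ at the failing turn is precisely what you play off against the congruence.
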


\begin{fact}\label{fact:threats2}
Suppose that the $(1:b)$ Avoider--Enforcer $H$-game is being played on $K_n$. 
If at some point of the game there are at least $b+1$ $H$-threats, then Enforcer has a strategy to win the game 
from this point on. The same is true under the assumption that Enforcer is the first player.
\end{fact}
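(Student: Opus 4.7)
The plan is to give a simple reactive strategy for Enforcer and verify it by a threat-count invariant. The strategy is: on each of Enforcer's turns, he claims as many currently free non-threat edges as possible (up to his quota of $b$); only if fewer than $b$ non-threat free edges remain does he fill the rest of his move with (arbitrarily chosen) threats. The starting hypothesis is that at the moment we take over, the number of $H$-threats is already at least $b+1$.

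The key monotonicity observation is that when Avoider claims a non-threat edge, no previously existing threat can disappear: since enlarging $G_A$ cannot destroy $H$-copies, any free edge whose addition to $G_A$ would have created a new $H$-copy still does so after one more non-threat edge joins $G_A$. Hence, if there are at least $b+1$ threats just before Avoider's move, then either Avoider claims a threat and loses immediately, or she claims a non-threat and the threat count is still at least $b+1$.

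Now I analyse Enforcer's response under this invariant. Let $n_f$ be the current number of free non-threat edges and $t \geq b+1$ the current number of threats. If $n_f \geq b$, Enforcer claims $b$ non-threats; the threat count stays at $\geq b+1$ and the invariant is preserved, while the total number of free edges strictly decreases. If $n_f < b$, then $n_f + t \geq n_f + b + 1 > b$, so the $b$-move is still legal: Enforcer takes all $n_f$ non-threats and $b - n_f$ threats, after which the remaining $\geq n_f + 1 \geq 1$ free edges are all threats, forcing Avoider to claim one on her next turn and lose.

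Since the total number of free edges strictly drops on every round of the first branch, the second branch (or a direct loss by Avoider) is reached after finitely many rounds, and Enforcer wins. The same argument covers the Enforcer-first variant: Enforcer simply applies the rule above as his opening move, either immediately triggering the second branch or establishing the invariant for the subsequent alternating play. The only subtlety worth flagging is the legality of Enforcer's move when $n_f < b$, which is precisely why we require the threat-count to be at least $b+1$ (and not merely $b$) to begin with.
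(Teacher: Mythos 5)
Your proof is correct: the reactive strategy of always preferring non-threat edges, together with the monotonicity observation that a non-threat move by Avoider can never destroy an existing threat, is exactly the standard argument behind this fact. The paper states it as well-known without giving a proof, and your write-up (including the legality check $n_f + t > b$ and the Enforcer-first variant) supplies precisely the reasoning the authors are relying on.
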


\section{Proofs of Theorems~\ref{lowerH}, \ref{upperCycle} and \ref{manyUnic}}\label{sec:short-proofs}

Here we present the proofs of Theorems~\ref{lowerH}, \ref{upperCycle} and \ref{manyUnic}. They can be read independently of each other.

\subsection{Proof of Theorem~\ref{lowerH}}
The key building blocks of the proof of Theorem~\ref{lowerH} are the following two lemmata. The first is a 
simplified version of a result due to the first author \cite{af14}.

\begin{lemma}[{implicit in \cite[Theorem 1.2]{af14}}]\label{avoiderwin}
	Let $H$ be a graph with at least one edge. Assume that $b$ and $q$ are two positive integers such that the remainder of the division of $\binom n2$ by $b+1$ is at least $q+1$, and 
	$$n^{v(H)}\cdot\left(\frac{q}{e(H)}+1\right)^{-e(H)}<1.$$
	Then Avoider has a winning strategy for the $(1 : b)$ Avoider--Enforcer $H$-game on $K_n$.
\end{lemma}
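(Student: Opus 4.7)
The plan is to equip Avoider with an Erd\H{o}s--Selfridge-style potential strategy. Let $\lambda := q/e(H)+1$; writing $\mathcal{H}$ for the collection of copies of $H$ in $K_n$ and, for $H'\in \mathcal{H}$, $a(H'):=|E(H')\cap E(G_A)|$, I define the potential
\[
\Phi(G_A,G_E) := \sum_{H'\in\mathcal{H}:\,E(H')\cap E(G_E)=\emptyset}\lambda^{a(H')}.
\]
Initially $\Phi_0 = |\mathcal{H}|\le n^{v(H)}$, which is strictly less than $\lambda^{e(H)}$ by the second hypothesis. Since a copy of $H$ fully claimed by Avoider contributes the single summand $\lambda^{e(H)}$ to $\Phi$, it suffices to design a strategy that keeps $\Phi < \lambda^{e(H)}$ for the entire game.

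Avoider's strategy will be greedy: on each of her turns she selects a free edge $e^{\ast}$ minimizing $g(e):=\sum_{H'\ni e,\,\text{alive}}\lambda^{a(H')}$, where ``alive'' means $E(H')\cap E(G_E)=\emptyset$. Her move increases $\Phi$ by $(\lambda-1)g(e^{\ast})$, whereas Enforcer's response can only decrease $\Phi$ since claiming an edge of an alive copy removes it from the sum. The pivotal double-counting inequality is $\sum_{e\,\text{free}}g(e)\le e(H)\Phi$, which follows because each alive $H'$ has at most $e(H)$ free edges, each contributing $\lambda^{a(H')}$ to the left-hand side. Together with the remainder condition---which forces the free-edge count $F_t := \binom{n}{2}-(t-1)(b+1)$ to satisfy $F_t\ge q+1$ at every Avoider turn, because $F_t$ decreases by $b+1$ per round and terminates at the remainder $r\ge q+1$---this yields $g(e^{\ast}) \le e(H)\Phi/(q+1)$, so a single Avoider move multiplies $\Phi$ by at most $1+q/(q+1)$. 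In particular, so long as $\Phi<\lambda^{e(H)}$, the number of almost-complete alive copies (with $a(H')=e(H)-1$) is at most $q$, so the number of $H$-threats is at most $q<q+1\le F_t$, and Avoider's greedy choice is safe.

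The hardest step will be passing from the per-move growth estimate to a sufficiently tight global bound. I plan to rewrite the overall product $\Phi_T/\Phi_0\le\prod_{t=1}^{T}\bigl(1+q/F_t\bigr)$ as a ratio of Pochhammer symbols
\[
\prod_{s=0}^{T-1}\frac{M_+-s}{M-s},\qquad M:=\tfrac{\binom{n}{2}}{b+1},\ M_+:=M+\tfrac{q}{b+1},
\]
and exploit the near-equality $M_+-M=q/(b+1)<1$ together with the remainder-induced bound $M-T+1\ge (q+1)/(b+1)$ to show that this product is at most the slack $\lambda^{e(H)}/|\mathcal{H}|>1$ furnished by the second hypothesis. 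The delicate piece is the quantitative calibration: early rounds contribute factors essentially equal to $1$ (since $F_t$ is close to $\binom{n}{2}$), while only the terminal segment---controlled precisely by the remainder---produces a non-trivial increase, and the hypothesis $n^{v(H)}\lambda^{-e(H)}<1$ is sharp enough to absorb exactly this increase. Once $\Phi < \lambda^{e(H)}$ is maintained throughout, no $H$-copy is ever completed by Avoider, and the lemma follows.
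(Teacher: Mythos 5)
The paper does not actually prove Lemma~\ref{avoiderwin}; it cites it as implicit in~\cite[Theorem~1.2]{af14}. So I can only evaluate your argument on its own terms, and unfortunately it has a genuine quantitative gap that the proposed Pochhammer-ratio computation cannot close.

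Your potential $\Phi=\sum_{\text{alive }H'}\lambda^{a(H')}$ with $\lambda=q/e(H)+1$, the greedy rule, and the one-step bound $\Phi\mapsto\Phi\cdot(1+q/F_t)$ are all set up correctly (modulo the small point that Avoider should take the $g$-minimiser among \emph{non-threat} free edges rather than among all free edges, which changes the denominator from $F_t$ to $F_t-q$ but not the order of magnitude). The problem is that the resulting global bound
\[
\Phi_T\le\Phi_0\prod_{t=1}^{T}\Bigl(1+\tfrac{q}{F_t}\Bigr)
\]
is far too weak. Writing $N=\binom{n}{2}$, $B=b+1$, $T=m+1$ with $N=mB+r$ and $r\ge q+1$, one has
\[
\ln\prod_{t=1}^{T}\Bigl(1+\tfrac{q}{F_t}\Bigr)\;\ge\;\sum_{t=1}^{T}\frac{q}{2F_t}
\;\ge\;\frac{q}{2B}\,\ln\frac{N}{r},
\]
i.e.\ the product is at least $(N/r)^{q/(2B)}$. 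In every application of this lemma in the paper (proof of Theorem~\ref{lowerH}), one takes $H=F$ to be the densest subgraph, $q=\lceil e(F)n^{1/m(H)}\rceil$ and $b+1\in(q,Cq]$, so $q/B=\Theta(1)$, $N=\Theta(n^2)$, and $r=\Theta(n^{1/m(H)})$ with $1/m(H)<2$. Hence the product is $n^{\Omega(1)}$. On the other hand, the only slack available to you is $\lambda^{e(H)}/\Phi_0\le\lambda^{e(H)}/|\mathcal{H}|$, and the hypothesis $n^{v(H)}\lambda^{-e(H)}<1$ is used essentially tightly, so this slack is $O(1)$ (in fact $1+o(1)$ in the extremal case). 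You cannot have a polynomially growing product bounded by a constant. The "near-equality'' $q/(b+1)<1$ you hope to exploit is correct but does not make the Pochhammer ratio $\Gamma(M_++1)\Gamma(M-T+1)/\bigl(\Gamma(M+1)\Gamma(M_+-T+1)\bigr)$ small: the first ratio alone is $\approx M^{q/(b+1)}=(N/B)^{q/(b+1)}=n^{\Omega(1)}$.

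The underlying issue is that a potential with a \emph{fixed} base $\lambda$ cannot encode that a partial copy with few Avoider edges, created early, has many rounds in which Enforcer can kill it. The derandomisation needs to be with respect to the true conditional probability of a copy ending up fully Avoider's, which changes as the number of remaining Avoider moves and remaining free edges both shrink; equivalently, the weight attached to an alive copy with $f$ free edges should be something like $\binom{R_t}{f}/\binom{F_t}{f}$ (with $R_t$ the number of remaining Avoider moves), not $\lambda^{-f}$ for a constant $\lambda$. With such a time-dependent weight the increases caused by Avoider's moves are offset, and the remainder condition ($F_t\ge q+1$ and hence $R_t\ge 1$ whenever Avoider must move) is what makes the terminal estimate come out to exactly the stated bound. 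As written, your proof sketch does not establish the invariant $\Phi<\lambda^{e(H)}$, and the plan for step~3 cannot be repaired without changing the potential itself.
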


The second is a number theoretic result. We defer its proof to the appendix.

\begin{lemma}\label{bigrem}
Given real numbers $\alpha,c_1$ and $c_2$ with $\alpha > 1$ and $0<c_2<c_1$, there exists a positive constant $C=C(\alpha,c_1,c_2)$ with the following property. For any integers $q$ and $N$ with $q,N\ge C$ and $c_2 q^{\alpha}\le N \le c_1 q^{\alpha}$, 
we can find an integer $k$ such that $q<k\le Cq$ and the remainder of the division of $N$ 
by $k$ is greater than $q$.  
\end{lemma}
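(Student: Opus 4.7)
The plan is to exhibit a suitable $k$ of the form $k = \lfloor N/(M+1)\rfloor+1$, the smallest integer in the interval $S_M := (N/(M+1), N/M]$, for an appropriately chosen positive integer $M$. Provided $N \geq M(M+1)$, this $k$ lies in $S_M$, so $\lfloor N/k\rfloor = M$ and $N \bmod k = N - Mk$. Writing $N = (M+1)\lfloor N/(M+1)\rfloor + r$ with $r := N\bmod (M+1) \in [0,M]$ yields the key identity
\[
N \bmod k \;=\; \lfloor N/(M+1)\rfloor - M + r \;\geq\; \lfloor N/(M+1)\rfloor - M.
\]
It will therefore suffice to ensure $k \in (q, Cq]$ together with $N/(M+1) - M > q$, i.e., $N > (M+1)(M+q)$.

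In the main regime $1 < \alpha \leq 2$, I would take $M := \lceil N/((C-1)q)\rceil$. By construction, $N/(M+1) < (C-1)q$, whence $k \leq Cq$, while $k > q$ reduces to $M + 1 \leq N/q$, valid for $q$ sufficiently large since $\alpha > 1$. To verify $N > (M+1)(M+q)$, substitute $M \leq c_1 q^{\alpha-1}/(C-1) + 1$ and note that for $\alpha \leq 2$ the leading term of $(M+1)(M+q)$ is $Mq \leq c_1 q^\alpha/(C-1) + q$. The inequality then reduces, after collecting lower-order contributions from $M^2$ and $M$, to $c_1/(C-1) + o(1) < c_2$, valid for $C > 1 + c_1/c_2$ and $q$ large. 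The side condition $N \geq M(M+1)$ needed to have $\lfloor N/k \rfloor = M$ is handled by the same estimate.

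The regime $\alpha > 2$ requires a distinct approach, because $M \asymp q^{\alpha-1}$ can exceed $q$ and then $M^2 \gg N$ for any fixed $C$. Here I would work on the set $I := [N/(Cq), N/q) \cap \mathbb{Z}$, of size $|I| \gtrsim q^{\alpha-1}$: every $M \in I$ gives $k_M := \lfloor N/M\rfloor \in (q, Cq]$, and since $\alpha > 2$ forces $M^2 > N$ for $q$ large, one has $N \bmod k_M = N \bmod M$ whenever $N \bmod M < k_M$. A failure of the lemma would then force, for most $M \in I$, that $M$ divides some element of $[N - q, N]$; combined with the divisor bound $\tau(m) = m^{o(1)}$ and the polynomial constraint $N \leq c_1 q^\alpha$, this contradicts $|I| \gtrsim q^{\alpha-1}$ once $q$ is large. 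The main obstacle is the technical subtlety that the hypothesis ``$N \bmod k \leq q$ for all $k \in (q, Cq]$'' only directly yields $(N \bmod M) \bmod k_M \leq q$ for $M \in I$, requiring care to extract the desired divisibility statement on $M$ itself before invoking the divisor bound.
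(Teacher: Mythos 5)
Your argument for the regime $1 < \alpha \le 2$ is correct, and it is a genuinely different (and more elementary) route than the paper's. You directly construct $k$ as the smallest integer above $N/(M+1)$ with $M = \lceil N/((C-1)q)\rceil$; the identity $N \bmod k = \lfloor N/(M+1)\rfloor - M + r$ and the constraint $N > (M+1)(M+q+1)$ (absorbed into $C > 1 + c_1/c_2$ plus large $q$) close the case cleanly. The paper instead picks $b=Cq$ and a step size $x \approx C^{1+\beta/(2t)}b^{(1-\beta)/t}$, expands $N$ in the ``mixed radix'' $b, b(b-x), \dots, b(b-x)\cdots(b-(t-1)x)$, and shows that if all of $b, b-x, \dots, b-tx$ left remainders $\le q$ then the remainders of the partial sums modulo $b$ stay $O(q)$ — contradicting that the top coefficient forces a remainder strictly between $d_tq$ and $b/2$. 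That mechanism works uniformly for all $\alpha > 1$, whereas your argument is specific to $\alpha \le 2$.

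For $\alpha > 2$ there is a genuine gap, and I do not think the obstacle you flag is merely ``technical.'' Under a putative failure of the lemma you only get $(N \bmod M)\bmod k_M \le q$ for $M \in I$; as you note, this gives $N \bmod M \le q$ \emph{only when} $N \bmod M < k_M$, and for each $k \in (q, Cq]$ there is exactly one $M$ with $\lfloor N/M\rfloor = k$ and $N \bmod M < k$, namely $M = \lfloor N/k\rfloor$. So the hypothesis produces at most $(C-1)q$ values of $M$ that divide an element of $[N-q,N]$ — not ``most $M \in I$.'' Divisor counting over $[N-q,N]$ gives a budget of about $(q+1)\cdot N^{o(1)} = q^{1+o(1)}$ divisors, which comfortably accommodates $(C-1)q$ of them; there is no contradiction. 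Put differently, you would need $\gg q^{1+\varepsilon}$ forced divisibilities to beat $\tau(m)=m^{o(1)}$, but the hypothesis supplies only $O(q)$. So the $\alpha > 2$ case needs a different idea altogether; the paper's approach of examining remainders modulo the arithmetic progression $b, b-x, \dots, b-tx$ simultaneously, rather than modulo each $k$ in isolation, is precisely the extra leverage required.
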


With Lemmata \ref{avoiderwin} and \ref{bigrem} in hand, it is easy to finish the proof of Theorem \ref{lowerH}.
\begin{proof}[\textbf{Proof of Theorem~\ref{lowerH}}.] 
	If $H$ is a matching with at least two edges, 
	then Avoider trivially wins when $b \geq \binom{n}{2}-1 = \Theta(n^2)$,
	so we can assume that $H$ contains two adjacent edges.
	Let $F$ be a subgraph of $H$ such that $e(F)/v(F)=m(H)$. Set $$q=\lceil e(F)\cdot n^{1/m(H)}\rceil=\lceil e(F)\cdot n^{v(F)/e(F)}\rceil.$$
	By the choice of $q$, we have $n^{v(F)}\left(\frac{q}{e(F)}+1\right)^{-e(F)}<1$.
	Furthermore, since $H$ contains two adjacent edges, $m(H)>1/2$. Thus we can use Lemma \ref{bigrem} with $\alpha=2m(H)$, $c_1=1$, $c_2=\frac13 e(F)^{-2m(H)}$, $q=\lceil e(F)\cdot n^{1/m(H)}\rceil$, and $N=\binom{n}{2}$ to conclude that for a suitable positive constant $C=C(H)$
there exists an integer $k=k(n)$ such that $q<k\le Cq$ and the remainder of a division of $\binom n2$ by $k$ is at least $q+1$. By Lemma \ref{avoiderwin} we conclude that 
	for $b=k-1$ Avoider has a strategy to avoid creating a copy of $F$ in $K_n$. This means that 
	she also avoids a copy of $H$, giving $\fl_{H}(n)<Cq-1=O(n^{1/m(H)})$. 
\end{proof}

\subsection{Deriving Theorem~\ref{upperCycle} from Theorem~\ref{upper_strategy}}\label{sec:upper}

In order to establish Theorem~\ref{upperCycle} via Theorem~\ref{upper_strategy}, we will need the following simple number theoretic results, whose proofs are given in the appendix.

\begin{lemma}\label{div}
Given $c>0$ and {a rational number} $\alpha\in (0,2]$, there exists $d \in (0,c)$ such that for   
infinitely many $n\in\N$ there is an odd divisor $q$ of $\binom n2-1$ with
$dn^{\alpha}\le q\le c n^{\alpha}$.
\end{lemma}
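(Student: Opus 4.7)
The plan relies on the identity $\binom{n}{2}-1=\frac{(n-2)(n+1)}{2}$ and on constructing, for an odd parameter $t\to\infty$, a value $n=n(t)$ together with an explicit odd divisor $q=q(t)$ of $\binom{n}{2}-1$ of size $\Theta(n^\alpha)$. Writing $\alpha=p/r$ with positive integers $p,r$, I split into two cases according to whether $\alpha\le 1$ or $\alpha>1$; the borderline cases $\alpha=1$ and $\alpha=2$ arise naturally.

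In the first case ($\alpha\le 1$, so $p\le r$), for a positive integer $a$ to be chosen, set $n:=at^{p}+2$ and $q:=t^{p}$. Since $t$ is odd, $q$ is odd; clearly $q\mid n-2$, and being odd this forces $q\mid\binom{n}{2}-1$. A routine computation gives $n\sim at^{p}$ and $q/n^{\alpha}\sim t^{p(r-p)/r}/a^{p/r}$ as $t\to\infty$. If $\alpha<1$ (so $r>p$), fix $c_{0}\in(0,c)$ and take $a=a(t):=\lceil t^{r-p}/c_{0}^{r/p}\rceil$, giving $q/n^{\alpha}\to c_{0}$. If $\alpha=1$ (so $r=p$), take the fixed value $a:=\lceil 2/c\rceil$, which gives $q/n\to 1/a\le c/2$.

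In the second case ($1<\alpha\le 2$, so $r<p\le 2r$), for a positive integer $b$ to be chosen, set $n:=2bt^{p-r}-1$ and $q:=(n-2)t^{p-r}$. Since $2bt^{p-r}$ is even, $n$ is odd, so $n-2$ is odd, and $t^{p-r}$ is odd, whence $q$ is odd. Moreover $(n+1)/2=bt^{p-r}$, so $\binom{n}{2}-1=(n-2)\cdot bt^{p-r}=b\,q$ and hence $q\mid\binom{n}{2}-1$. A routine computation yields $q/n^{\alpha}\sim t^{(p-r)(2r-p)/r}/(2b)^{(p-r)/r}$. If $\alpha<2$ (so $p<2r$), fix $c_{0}\in(0,c)$ and take $b=b(t):=\lceil t^{2r-p}/(2c_{0}^{r/(p-r)})\rceil$, giving $q/n^{\alpha}\to c_{0}$. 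If $\alpha=2$ (so $p=2r$), the exponent of $t$ vanishes and the fixed value $b:=\lceil 1/c\rceil$ yields $q/n^{\alpha}\to 1/(2b)\le c/2$.

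In every subcase $q/n^{\alpha}$ tends to a limit in $(0,c)$, so $q\le cn^{\alpha}$ for all sufficiently large $t$, and one may take $d$ to be half of that limit. As $t$ runs through the odd positive integers, $n(t)$ is eventually strictly increasing, producing infinitely many valid $n$. I expect the only subtlety to be verifying that the ceiling operations in the definitions of $a(t)$ and $b(t)$ do not disturb the limits $q/n^{\alpha}\to c_{0}$; this is routine since the $O(1)$ rounding error is negligible compared to the main term whenever that term tends to infinity (i.e.\ in the non-boundary subcases), and the two boundary subcases $\alpha=1,\,2$ have no rounding issue at all because $a$ or $b$ is then taken to be constant in $t$.
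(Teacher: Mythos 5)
Your proposal is correct and takes essentially the same approach as the paper: both rely on the factorization $\binom{n}{2}-1=(n-2)(n+1)/2$, split into the cases $\alpha\le 1$ and $1<\alpha\le 2$, and parameterize $n$ by an odd integer $t$ so that an odd power of $t$ divides the relevant factor. The only cosmetic difference is that the paper fixes a large constant $C$ and sets $n=2(Ck)^{r_2}+2$ (so the ratio $q/n^\alpha$ is automatically pinned between two constants), whereas you let the auxiliary parameter $a(t)$ or $b(t)$ grow with $t$ to tune the same ratio; this is an equivalent bookkeeping choice.
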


\begin{lemma}\label{div2}
Given $c>0$ and $\alpha \in (1,2)$, there exists $n_0\in \N$ such that for
each integer $n>n_0$ one can find an integer $1 \le t \le c^2 n^{2\alpha-2}$ for which $\binom n2-t$ is divisible by some odd integer $q$ with
$\frac13cn^{\alpha}\le  q\le c n^{\alpha}$.
\end{lemma}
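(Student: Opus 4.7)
The plan is to prove the lemma by an explicit construction, anchored in the identity
\[
\binom{n}{2} - \binom{j+1}{2} \;=\; \frac{(n-j-1)(n+j)}{2} \qquad (j \ge 0),
\]
which is verified by expanding the right-hand side. Let $T := c^2 n^{2\alpha-2}$ and let $J$ be the largest integer with $\binom{J+1}{2} \le T$; then $J = (1+o(1))\, c\sqrt{2}\, n^{\alpha-1}$. Setting $t := \binom{j+1}{2}$ for some $j \in [1,J]$ automatically yields $t \in [1,T]$, so the problem reduces to finding $j \in [1,J]$ for which $(n-j-1)(n+j)/2$ has an odd divisor in $[\tfrac{1}{3} c n^\alpha, c n^\alpha]$.

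To exhibit such a $j$, first pick any odd integer $d$ in the interval $I := \bigl[\tfrac{c}{2}\, n^{\alpha-1},\; \tfrac{c}{\sqrt{2}}\, n^{\alpha-1}\bigr]$. Since $|I| = \Theta(n^{\alpha-1})$, such a $d$ exists for every sufficiently large $n$. Because $\gcd(d,2) = 1$, the Chinese Remainder Theorem yields an integer $j \in [1, 2d]$ satisfying simultaneously $j \equiv n \pmod{2}$ and $d \mid n+j$. By the upper bound on $I$, $2d \le c\sqrt{2}\, n^{\alpha-1} \le J$ for all sufficiently large $n$, so $j \in [1, J]$. Define $q := (n-j-1)\, d$.

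I then verify the three required properties of $q$. First, $q$ is odd: since $n$ and $j$ share parity, $n-j-1$ is odd, and $d$ is odd by construction. Second, $q \in [\tfrac{1}{3} c n^\alpha,\, c n^\alpha]$: since $j \le J = o(n)$, we have $n-j-1 = n\,(1+o(1))$, and the lower end of $I$ was chosen so that $n d \ge \tfrac{1}{2} cn^\alpha$; the $1+o(1)$ factor then brings $q = (n-j-1)\,d$ safely inside $[\tfrac{1}{3} c n^\alpha,\, c n^\alpha]$ for all large $n$, while $q \le n d \le \tfrac{c}{\sqrt 2} n^\alpha < c n^\alpha$. Third, $q \mid \binom{n}{2}-t$: from the identity, $\binom{n}{2}-t = (n-j-1)(n+j)/2$; since $n+j$ is even and $d$ is odd with $d \mid n+j$, we have $d \mid (n+j)/2$, and hence $q = (n-j-1)\,d$ divides $(n-j-1)(n+j)/2 = \binom{n}{2} - t$.

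The most delicate point, once the key factorisation is in hand, is the simultaneous calibration of the two endpoints of $I$: the upper endpoint is dictated by $2d \le J \sim c\sqrt{2}\, n^{\alpha-1}$ (so that the CRT solution $j$ fits in $[1,J]$), and the lower endpoint by the requirement $q \ge \tfrac{1}{3} cn^\alpha$ (so that the factor $n\cdot d$ leaves room for the $1+o(1)$ correction from $n-j-1$). The interval $[\tfrac{c}{2}n^{\alpha-1},\, \tfrac{c}{\sqrt{2}}n^{\alpha-1}]$ meets both constraints for every $n$ beyond some threshold $n_0$, which is all that the lemma demands.
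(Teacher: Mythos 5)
Your construction is essentially the paper's proof, rewritten with the two linear factors swapped. Both arguments rest on the identity $\binom{n}{2}-\binom{m+1}{2}=(n-m-1)(n+m)/2$, both fix an odd quantity of size $\Theta(n^{\alpha-1})$ first ($x$ in the paper, $d$ in yours), and both then choose the shift $m$ (the paper's $k$, your $j$) in a residue class so that the chosen odd quantity divides half of the other linear factor while the factor kept in $q$ has odd parity. The paper sets $q=(n+k)x$ with $2x\mid n-k-1$; you set $q=(n-j-1)d$ with $d\mid n+j$ and enforce the parity condition separately via CRT. That is a cosmetic reflection, not a different method.

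One small slip worth flagging: you assert $c\sqrt{2}\,n^{\alpha-1}\le J$, but $J$ is the largest integer with $J(J+1)/2\le T=c^2n^{2\alpha-2}$, so in fact $J<\sqrt{2T}=c\sqrt{2}\,n^{\alpha-1}$. Hence $2d$ can exceed $J$ (by $1$), and the CRT solution could be $j=2d=J+1$, giving $t=\binom{j+1}{2}>T$. The fix is trivial and does not change the structure of the argument: take $d$ to be (say) the smallest odd integer at least $\tfrac{c}{2}n^{\alpha-1}$, so $2d\le cn^{\alpha-1}+O(1)$, which is comfortably below $J\sim c\sqrt2\,n^{\alpha-1}$ once $n$ is large; all other estimates go through unchanged.
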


\begin{proof}[\textbf{Proof of Theorem~\ref{upperCycle}}]
Let $h=v(H)$, and let $\gamma=\gamma(H) \in (0,1)$ be the constant given by Theorem \ref{upper_strategy}. 

    (\ref{upper_tree})
	Based on Lemma \ref{div2} with $c = \gamma$ and 
	$\alpha=\frac{h-1}{h-2}$, for $n$ large enough we {can} find  
	an integer $1 \le t \le {\gamma^2}n^{2/(h-2)}$
	and an integer $b$ so that ${n \choose 2} - t$ is divisible by $b+1$ and 
	$\tfrac13 {\gamma} n^{\frac{h-1}{h-2}} \leq b+1 \leq {\gamma} n^{\frac{h-1}{h-2}}$. 
	Note that $8h n\le b+1\le \gamma n^{(h-1)/(h-2)}$, so in view of Theorem \ref{upper_strategy}, Enforcer has a strategy such that at some point
	either he has already won or the number of threats is greater than
	$$
	\gamma^{h-2}n^{h}/(b+1)^{h-2} \ge \gamma^{h-2}n^{h}/(\gamma n^{(h-1)/(h-2)})^{h-2}={n}>t{,}
	$$
	for $h\ge 4$.
	Hereby Enforcer has a winning strategy by Fact \ref{fact:threats}.

\smallskip
(\ref{upper_item1})
Using Lemma \ref{div2} with $c = \gamma$ and 
$\alpha=\frac{h+2}{h+1}$, we infer that for $n$ large enough there exist 
an integer $1 \le t \le c^2n^{2/(h+1)}$
and an even number $b$ so that ${n \choose 2} - t$ is divisible by $b+1$ and 
$\tfrac13 c n^{\frac{h+2}{h+1}} \leq b+1 \leq c n^{\frac{h+2}{h+1}}$.  
As $8h n\le b+1\le \gamma n^{h/(h-1)}$, we learn from Theorem \ref{upper_strategy} that Enforcer has a strategy such that at some point
either he has already won or the number of threats is greater than
$$
\gamma^{h-1}n^{h}/(b+1)^{h-1} \ge \gamma^{h-1}n^{h}/(\gamma n^{(h+2)/(h+1)})^{h-1}=n^{2/(h+1)}>t.
$$
Thus Enforcer has a winning strategy by Fact \ref{fact:threats}.

\smallskip
(\ref{upper_item2})
Using Lemma \ref{div} with $c = \gamma$ and $\alpha = \frac{h}{h-1}$, we conclude that there is a constant $d=d(\gamma,h)>0$ such that for infinitely many $n \in \mathbb{N}$ there exists an even {number} $b$ for which 
$dn^{h/(h-1)} \leq b+1 \leq  \gamma n^{h/(h-1)}$ and ${n \choose 2}-1$ is divisible by $b+1$. 
By Theorem \ref{upper_strategy}, Enforcer has a strategy such that  
at some point either he has already won or there are more than $\gamma^{h-1}   n^{h} /(b+1)^{h-1}$ threats. 
In the latter case, since 
$\gamma^{h-1}  n^{h} / (b+1)^{h-1} \ge 1$,  by Fact \ref{fact:threats} Enforcer wins.
\end{proof}

\subsection{Proof of Theorem~\ref{manyUnic}}
Here we give the proof of Theorem~\ref{manyUnic}.
Our arguments rely on a result due to the first author \cite{af14}.

\begin{lemma}[{implicit in \cite[Theorem 1.2]{af14}}]\label{avoiderwin2}
	Let ${\mathcal H}$ be a finite collection of graphs with at least two vertices, and let $b$ be a natural number with
	$$
	\sum_{F\in{\mathcal H}}n^{v(F)}\cdot\Big(\frac{b}{e(F)}+1\Big)^{-e(F)}<1.
	$$
	Then Avoider has a strategy for the $(1 : b)$ Avoider--Enforcer game on $K_n$ 
	such that no copy of any graph $F\in{\mathcal H}$ appears in her graph before the last round.
\end{lemma}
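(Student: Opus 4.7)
The plan is to run an Erd\H{o}s--Selfridge style potential argument adapted to biased Avoider--Enforcer games with several forbidden subgraphs. For each $F\in\mathcal{H}$ set $\alpha_F:=1+b/e(F)$, a choice tuned to the identity $(\alpha_F-1)\,e(F)=b$; the hypothesis then reads $\sum_F n^{v(F)}\alpha_F^{-e(F)}<1$. For a labelled embedding $\tilde F$ of $F$ into $V(K_n)$, call $\tilde F$ \emph{alive} at time $t$ if Enforcer has claimed no edge of $\tilde F$, and let $a_t(\tilde F)$ be the number of Avoider's edges in $\tilde F$. Put
\[
\phi_t\;:=\;\sum_{F\in\mathcal{H}}\alpha_F^{-e(F)}\sum_{\tilde F\text{ alive}}\alpha_F^{a_t(\tilde F)}.
\]
By hypothesis $\phi_0\le\sum_F n^{v(F)}\alpha_F^{-e(F)}<1$, and any copy of some $F\in\mathcal H$ inside Avoider's graph corresponds to an alive embedding $\tilde F$ with $a_t(\tilde F)=e(F)$ that on its own contributes $\alpha_F^{-e(F)}\alpha_F^{e(F)}=1$ to $\phi_t$. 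Thus maintaining the invariant $\phi_t<1$ prevents any such copy from appearing.

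The strategy I would use is the natural greedy one: at each turn Avoider picks the free edge that minimises $\phi_t$ after her move. The change caused by her picking $e$ is
\[
\Delta_A(e)\;=\;\sum_F\alpha_F^{-e(F)}(\alpha_F-1)\sum_{\tilde F\text{ alive},\;e\in\tilde F}\alpha_F^{a_t(\tilde F)},
\]
and summing over the $N_t$ currently free edges---while bounding the number of free edges in any alive $\tilde F$ by $e(F)$---gives $\sum_{e\text{ free}}\Delta_A(e)\le b\phi_t$, where the identity $(\alpha_F-1)e(F)=b$ is precisely what cancels. Hence Avoider's actual pick $e^\ast$ satisfies $\Delta_A(e^\ast)\le b\phi_t/N_t$. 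Enforcer's subsequent $b$ picks can only kill alive copies and therefore never increase $\phi$.

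The hard part I anticipate is showing that the invariant $\phi_t<1$ genuinely propagates: the naive bound $\phi_{t+1}\le\phi_t(1+b/N_t)$ telescopes to a factor of order $\binom{n}{2}/(b+1)$, which is too lossy to conclude $\phi_t<1$ from $\phi_0<1$ alone. To close the gap, the $b$ forced Enforcer picks must be exploited more finely; my first attempt would be to refine the weight to $\alpha_F^{a_t-e(F)}\beta_F^{z_t(\tilde F)}$, where $z_t(\tilde F)$ counts Enforcer's edges in $\tilde F$ and $\beta_F<1$ is tuned so that Enforcer's picks contribute a guaranteed decrease matching the $b\phi_t/N_t$ increase from Avoider, and then verify that the refined potential remains non-increasing even against an adversarial Enforcer who might pick only edges outside alive copies. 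Granted this, $\phi_t<1$ persists for every round before the partial final one; the ``before the last round'' caveat is then unavoidable, since in the final round Avoider may have fewer than $b+1$ free edges and can be forced to complete a copy.
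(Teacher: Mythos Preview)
Your setup is exactly right: the potential $\phi_t=\sum_F\alpha_F^{-e(F)}\sum_{\tilde F\text{ alive}}\alpha_F^{a_t(\tilde F)}$ with $\alpha_F=1+b/e(F)$ is the correct choice, and greedy Avoider is the correct strategy. But you have not closed the argument, and the gap is real. The bound $\Delta_A(e^\ast)\le b\phi_t/N_t$ uses only that $e^\ast$ is no worse than an \emph{average} free edge; treating Enforcer's contribution as merely non-positive then throws away precisely the information needed. Your proposed $\beta_F^{z_t}$ refinement does not help either: it still fails to exploit the minimality of $e^\ast$, and an adversarial Enforcer can always pick the $b$ free edges whose removal decreases the potential least, so no ``guaranteed decrease matching $b\phi_t/N_t$'' materialises.

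The missing step is simpler and requires no modification of the potential. Write $w(e):=\sum_F\tfrac{1}{e(F)}D_F(e)$, where $D_F(e)=\alpha_F^{-e(F)}\sum_{\tilde F\ni e,\text{ alive}}\alpha_F^{a_t(\tilde F)}$; then $\Delta_A(e)=b\,w(e)$, so Avoider's greedy pick minimises $w$ over all free edges. The key point is that each of Enforcer's picks $f_1,\dots,f_b$ was also free at the start of the round, hence $w(f_j)\ge w(e^\ast)$ for every $j$. The total decrease in $\phi$ from Enforcer's moves is therefore at least
\[
\sum_F\sum_{\tilde F\text{ killed}}\alpha_F^{a_t(\tilde F)-e(F)}
\;\ge\;\sum_F\frac{1}{e(F)}\sum_{j=1}^b D_F(f_j)
\;=\;\sum_{j=1}^b w(f_j)\;\ge\; b\,w(e^\ast),
\]
the first inequality holding because each killed copy of $F$ is counted at most $e(F)$ times in $\sum_j D_F(f_j)$. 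Thus the decrease from Enforcer matches the increase $b\,w(e^\ast)$ from Avoider, giving $\phi_{t+1}\le\phi_t$ in every full round. Since a copy of some $F$ lying entirely in Avoider's graph is alive and contributes $1$ to $\phi$, the invariant $\phi_t\le\phi_0<1$ rules out any such copy before the final (possibly short) round---which is exactly the ``before the last round'' caveat. The paper itself does not spell this out, citing \cite{af14}; the argument above is the standard one implicit there.
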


\begin{proof}[\textbf{Proof of Theorem~\ref{manyUnic}}.]
	Let $H$ be a graph with at least two unicyclic components $H_1, H_2, \ldots, H_t$. Consider a $(1:b)$ Avoider--Enforcer $H$-game played on $K_n$. Let $F_1,F_2,\ldots, F_t$ be the unique cycles in $H_1, H_2, \ldots, H_t$, respectively.
	By an easy calculation one can verify that there exists a positive constant $c=c(H)$ such that
	if $b>cn$, then the assumption of Lemma~\ref{avoiderwin2} is satisfied. Thus Avoider has a strategy 
	such that for each $i\in \{1,2,\ldots,t\}$, no copy of $F_i$ appears in her graph before the last round. 
	This fact implies that at the end of the game there are no disjoint copies of $H_1$ and $H_2$ in Avoider's graph.
	Therefore Avoider wins the $H$ game on $K_n$, implying that $\fu_{H}(n)\le cn$.
\end{proof}

\section{Lower bound on the lower threshold} \label{sec:lowerm1}
This section is devoted to the proof of Theorem \ref{lowerm1}. We remark that when $H$ is a tree, our proof uses Theorem~\ref{upper_strategy} as a black box.

\subsection{Proof of Theorem \ref{lowerm1}}

We will first state special cases of Theorem \ref{lowerm1}, and then establish Theorem \ref{lowerm1} using them. For this purpose, we introduce some notation.

\begin{defn}
Every unicyclic graph $H$ contains a single cycle $v_1,\ldots,v_k$, and for every $1 \leq i \leq k$ there is a (possibly trivial)\footnote{We say that $T_i$ is trivial if $V(T_i)=\{v_i\}$.}
tree $T_i$ which contains $v_i$ and no other vertex from $v_1,\dots,v_k$, such that the trees $T_1,\ldots,T_k$ are pairwise vertex-disjoint and do not have any 
{edge} between them except for the edges of the cycle $v_1,\ldots,v_k$.  
We say that an edge $e \in E(H)$ is a {\em cycle-edge} if it is one of the edges of the cycle $v_1,\ldots,v_k$. Otherwise $e$ is a {\em tree-edge}. 
The graph $H$ is called {\em odd} ({\em even}) if $k$ is odd (even). 
\end{defn}

The following results, whose proofs are given in the next three subsections, cover the case when $H$ is connected. 

	\begin{theorem}\label{lowerTree}
		If  $H$ is a tree with at least two edges, then $\fl_H(n)=\Omega(n^{1/m(H)})$. 
	\end{theorem}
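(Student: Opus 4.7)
The plan is to exhibit an Enforcer winning strategy for every $b$ up to a constant times $n^{1/m(H)}$, applying Theorem~\ref{upper_strategy} as the main engine. Set $h:=v(H)\ge 3$, so that $e(H)=h-1$, $m(H)=(h-1)/h$, and $1/m(H)=h/(h-1)$; let $\gamma=\gamma(H)\in(0,1)$ be the constant furnished by Theorem~\ref{upper_strategy}. I claim Enforcer wins the $(1:b)$ game on $K_n$ whenever $b+1\le\gamma\,n^{h/(h-1)}$, which immediately gives $\fl_H(n)\ge\gamma\,n^{h/(h-1)}-1=\Omega(n^{1/m(H)})$.

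For the main range $8hn\le b+1\le\gamma n^{h/(h-1)}$, both hypotheses of Theorem~\ref{upper_strategy} are satisfied: the lower one is assumed, and the upper one $b+1\le\gamma\,n^{(h-1)/(h-2)}$ follows from the exponent inequality $h/(h-1)\le(h-1)/(h-2)$, valid for all $h\ge 3$. Theorem~\ref{upper_strategy} then delivers an Enforcer strategy under which, at some stage, either Enforcer has already won, or the number of $H$-threats exceeds $\gamma^{h-2}n^h/(b+1)^{h-2}$. Raising $b+1\le\gamma\,n^{h/(h-1)}$ to the $(h-1)$st power and using $\gamma\le 1$ gives $(b+1)^{h-1}\le\gamma^{h-1}n^h\le\gamma^{h-2}n^h$, i.e.,
\[
\frac{\gamma^{h-2}\,n^h}{(b+1)^{h-2}}\ge b+1.
\]
Thus the threat count strictly exceeds $b+1$, and Fact~\ref{fact:threats2} converts this into a winning strategy for Enforcer.

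The small bias range $b+1<8hn$ is handled by elementary means. When $b+1\le n/(2(h-1))$, Avoider is compelled to end up with more than $(h-1)n$ edges; iteratively deleting low-degree vertices extracts a subgraph of minimum degree at least $h-1$, into which the tree $H$ embeds greedily, so $H$ appears in Avoider's graph regardless of her play. The narrow intermediate window $n/(2(h-1))<b+1<8hn$, of multiplicative width $O(h^2)$, can be closed either by a minor relaxation of Theorem~\ref{upper_strategy}'s hypothesis $b+1\ge 8hn$ (a proof convenience, not a genuine obstruction) or by a direct first-round threat-counting argument that produces $\Omega(n)$ threats and then appeals to Fact~\ref{fact:threats2}. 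The conceptual crux is the exponent identity $h/(h-1)<(h-1)/(h-2)$, which is exactly what makes the range of biases where Theorem~\ref{upper_strategy} yields enough threats contain the entire target regime; the only technical irritation is the small-$b$ window not covered verbatim by Theorem~\ref{upper_strategy}, and this poses no real difficulty.
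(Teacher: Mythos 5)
Your main-range computation and the small-bias density argument follow the same approach as the paper's proof, and the key insight — that $h/(h-1) < (h-1)/(h-2)$ makes the entire target regime fit under the ceiling of Theorem~\ref{upper_strategy} — is exactly right. However, there is a genuine gap in how you treat the intermediate window $\Theta(n) < b+1 < 8hn$.

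You dismiss this window by asserting either that the hypothesis $b+1 \geq 8hn$ of Theorem~\ref{upper_strategy} ``can be relaxed'' (but you are using that theorem as a black box, and in its proof the constant $8h$ enters through the condition $|V_i| \leq b/(8h)$ in the blow-up setup, so this relaxation is not free), or that ``a direct first-round threat-counting argument produces $\Omega(n)$ threats'' (but after one round Avoider has a single edge and there are no $H$-threats at all, so it is unclear what this means). Neither alternative is a proof. The paper closes this window by a concrete reduction: Enforcer first occupies every free edge of $E(K_n)\setminus E(K_{n'})$ for $n' = \lfloor cn/(8h)\rfloor$, after which the game is an $H$-game on $K_{n'}$ in which Avoider may have already placed some edges and Enforcer owes a partial move of $r \in \{0,\dots,b\}$ edges. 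The chosen $n'$ satisfies $8hn' \leq b+1 \leq \gamma'(n')^{h/(h-1)}$, so Theorem~\ref{upper_strategy} now applies — and this is precisely why Theorem~\ref{upper_strategy} carries the extra clause about Avoider selecting any number of edges in the first round while Enforcer selects exactly $r$. Without invoking that clause, the intermediate window is not covered. (Also, a small arithmetic slip: with $b+1 \leq n/(2(h-1))$, Avoider is forced to claim roughly $(h-1)(n-1)$ edges, not ``more than $(h-1)n$''; this is harmless since $(h-1)(n-1) > (h-2)n$ suffices for the minimum-degree extraction, but worth noting.)
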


\begin{theorem}\label{thm:odd_cycle}
	Let $H$ be an odd unicyclic graph with $h$ vertices. If 
	$b + 1 \leq n/(12h)$, then Enforcer wins the $(1:b)$ $H$-game on $K_n$. 
\end{theorem}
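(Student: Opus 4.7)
The plan is to turn the bound $b+1\le n/(12h)$ into a density statement for Avoider's graph, and then to invoke Fact~\ref{fact:threats2} at a carefully chosen moment. Since Avoider claims exactly one edge per turn in a game of length $\lceil\binom{n}{2}/(b+1)\rceil$, the hypothesis on $b$ guarantees that by the end of the game Avoider has claimed at least $\binom{n}{2}/(b+1)\ge 6h(n-1)$ edges. In particular, her final graph has average degree at least $12h$ and, by the standard peeling argument, contains a subgraph of minimum degree at least $6h$.

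To convert density into threats, I would fix a cycle edge $e=v_1v_k$ of the odd cycle of $H$ and set $H^- := H-e$, a tree on $h$ vertices. Each copy of $H^-$ in $G_A$ whose image of $e$ is still free contributes one $H$-threat. A straightforward greedy tree-embedding shows that any host graph of minimum degree at least $h$ contains $\Omega_h(n^{h-1})$ copies of $H^-$, so the pool of \emph{potential} threats at the end of the game is large. Enforcer's strategy should then be to preserve a sizeable portion of this pool: he plays conservatively so that he does not claim too many potential closing edges, while still satisfying his obligation of $b$ moves per turn. If at some moment during the game the number of preserved closing edges reaches $b+1$, Fact~\ref{fact:threats2} supplies a winning continuation.

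The oddness of the cycle of $H$ enters essentially at the point where Enforcer's restraint is structured. Since an odd cycle uses at least one same-colour edge in any $2$-colouring of its vertex set, Enforcer can work with an auxiliary $2$-colouring of $V(K_n)$: by concentrating his claims on cross-colour edges, he confines Avoider's edges to within the colour classes, where the same-colour closing edges for the $H^-$-copies that Avoider is forced to create remain available as threats. This is the point where the odd case genuinely differs from the even one, and it is what permits Enforcer to win in the small-bias regime that lies well below the threshold $b+1\ge 8hn$ covered by Theorem~\ref{upper_strategy}.

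The main obstacle I anticipate is making the preservation step quantitatively tight. Enforcer ends up claiming roughly $\binom{n}{2}-6hn$ edges, essentially all of $K_n$, so naively most potential closing edges would end up in his hand. Overcoming this will likely require an induction on $h=v(H)$, with the base case $H=K_3$ handled by a direct partition- or box-game-style argument, together with a supersaturation-style lemma showing that new $H^-$-copies are produced by Avoider's moves faster than their closing edges are exhausted by Enforcer's. The slack provided by $b+1\le n/(12h)$ — many Avoider edges per potential closing edge — is precisely what makes such a counting argument feasible.
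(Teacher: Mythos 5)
Your proposal shares the paper's central ideas: a bipartition $V(K_n)=X\sqcup Y$ with Enforcer restricted to cross-edges $E(X,Y)$, together with the observation that, since the cycle of $H$ is odd, any edge closing it atop an almost-complete copy must lie within $X$ or within $Y$ and is therefore never touched by Enforcer. You also correctly foresee the need for a supersaturation-style lemma. That said, two genuine gaps remain.

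First, the counting step is off: a graph on $n$ vertices with minimum degree $\Theta(h)$ contains $\Theta_h(n)$, not $\Omega_h(n^{h-1})$, copies of a tree on $h$ vertices; and, more importantly, even a correct copy count of $H^-$ does not directly bound the number of \emph{distinct} closing edges, since many $H^-$-copies can share the same one. The paper's Lemma~\ref{lem:threat_count} sidesteps this by directly lower-bounding the number of distinct cycle-threat pairs: every graph $G$ has at least $e(G)-(h-2)v(G)$ pairs $\{x,y\}$ whose addition yields an $H$-copy with $\{x,y\}$ a cycle-edge (the proof is an induction on $v(G)$, not on $h$). Second, your ``preservation'' concern --- that Enforcer, claiming almost all of $K_n$, would exhaust the closing edges --- misidentifies where the work lies. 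Enforcer claims \emph{only} cross-edges until $E(X,Y)$ is exhausted; at that mid-game moment (not at the end of the game, where no free edge could remain a threat) at least one of Avoider's edge counts $g_{X,Y}$, $g_X$, $g_Y$ in $E(X,Y)$, $E(X)$, $E(Y)$ is at least $(h-1)n$ thanks to $b+1\le n/(12h)$, and applying Lemma~\ref{lem:threat_count} to the corresponding subgraph yields at least $n\ge b+1$ cycle-threats, all lying within $X$ or within $Y$ and hence free or already Avoider's. Fact~\ref{fact:threats2} then closes the game; no induction on $h$, greedy embedding, or delicate balancing of Enforcer's claims is required.
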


\begin{theorem}\label{thm:even_cycle}
	Let $H$ be an even unicyclic graph with $h$ vertices, and suppose that the unique cycle in $H$ has length $k$. If 
	$b + 1 \leq n/(200k(h - k + 1))$, then Enforcer wins the $(1:b)$ $H$-game on $K_n$. 
\end{theorem}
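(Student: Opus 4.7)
The plan is to design an Enforcer strategy that forces Avoider to contain so many copies of a fixed tree $T^\ast = H - e^\ast$ (where $e^\ast$ is a well-chosen cycle edge of $H$) that the candidate edges playing the role of $e^\ast$ in those copies contribute at least $b+1$ simultaneous $H$-threats; Fact~\ref{fact:threats2} then finishes the argument. Write the cycle of $H$ as $v_1 v_2 \cdots v_k v_1$ with trees $T_1, \dots, T_k$ attached, and fix $e^\ast = v_1 v_k$, so that $T^\ast$ is a tree on $h$ vertices whose ``extremal'' pair of vertices consists of $v_1$ and $v_k$. Notice that because $k$ is even, $v_1$ and $v_k$ lie on opposite sides of the unique bipartition of $T^\ast$; this is exactly the feature that makes the problem delicate.

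Enforcer's strategy will have two intertwined components. First, he maintains a large ``working set'' $W \subseteq V(K_n)$ on which $G_E$ has low maximum degree: initially $W = V(K_n)$, and Enforcer evicts from $W$ any vertex whose $G_E$-degree inside $W$ crosses a threshold of order $k(h-k+1)$. Because each round Enforcer claims only $b$ edges and the bias satisfies $b+1 \le n/(200 k(h-k+1))$, a standard accounting shows that $|W| \ge n/2$ is preserved until the end of the game. Second, Enforcer simulates a tree-growing procedure: fixing a rooted BFS order on $T^\ast$ with root $v_1$, he tracks, for each $1 \le j \le h$, the collection $\mathcal{P}_j$ of partial embeddings of the first $j$ vertices of $T^\ast$ into $G_A[W]$. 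Whenever Avoider plays an edge inside $W$, Enforcer uses his $b$ moves to ``neutralise'' vertices that would otherwise be used by Avoider to block extensions of embeddings in some $\mathcal{P}_j$; a counting argument analogous to the ones driving Theorems~\ref{lowerTree} and~\ref{thm:odd_cycle} should then give the multiplicative growth $|\mathcal{P}_{j+1}| \ge c(H)\, n \, |\mathcal{P}_j| / ((b+1) k (h-k+1))$ at each level. Iterating $h-1$ times produces $|\mathcal{P}_h| = \omega(b+1)$ full embeddings of $T^\ast$ into $G_A$, and a straightforward double count, using that $|E(G_E)| + |E(G_A)| = O(n)$ inside $W$, shows that a positive fraction of the corresponding closing edges $\{\varphi(v_1), \varphi(v_k)\}$ are still free, so at least $b+1$ genuine $H$-threats are present.

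The main obstacle, and the reason the admissible bias carries the extra factor $k$ absent from Theorem~\ref{thm:odd_cycle}, is the bipartite nature of the cycle: Avoider can attempt to play so that $\varphi(v_1)$ and $\varphi(v_k)$ end up on the same side of some bipartition she implicitly induces, preventing the closing edge from lying in $G_A \cup G_E^{c}$ in a useful way. Overcoming this requires the tree-growing accounting above to be parity-sensitive, tracking for each partial embedding which side of the BFS bipartition of $T^\ast$ the current ``active'' vertex occupies; the factor $h-k+1$ then arises naturally as the cost of also walking through the heaviest attached tree $T_i$ once the cycle traversal is underway. Making all of these pieces fit into a single consistent potential argument, and verifying that the constant $200$ suffices, is where I expect the bulk of the technical work and the main difficulty to lie.
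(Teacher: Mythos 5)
Your proposal takes a very different route from the paper, but as written it is not a proof: it is a program whose key steps are unjustified, and at least one of them is false as stated.

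The paper does not try to force many copies of a spanning tree $T^\ast = H - e^\ast$ and then close them; it forces many vertex-disjoint copies of a much \emph{larger} auxiliary tree $T(H)$, which is not a subgraph of $H$ at all. Concretely, $T(H)$ is built by taking a path $u_1,\dots,u_{3k/2}$ and hanging a copy of the cycle-contracted tree $T'(H)$ at each $u_i$. The point of this construction (Fact~\ref{obs:copy_from_tree}) is that any two disjoint $T(H)$-copies joined by two suitably offset edges already contain a copy of $H$; this is how the even-cycle bipartite obstruction is neutralised, and it is where both the factor $k$ (length $3k/2$ of the path) and the factor $h-k+1$ (size of $T'(H)$) enter, via the bound $t(H)\le 3k(h-k+1)/2$. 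The strategy is then staged: in Stage~I Enforcer plays only inside $E(X)\cup E(Y)$ so that \emph{Avoider's} degree inside $X$ and inside $Y$ stays bounded (Claim~\ref{claim:max_degree}), which together with Lemma~\ref{lem:disj_copies} produces $\Omega(b)$ disjoint $T(H)$-copies in each of $X$ and $Y$; Stages~II and~III then carve out a small set $E^{*}$ of relevant $X$--$Y$ edges, with Enforcer playing only in a sub-family $E^{**}$ designed so that every Avoider edge in $E^{*}$ yields a fresh threat whose closing edge cannot have been taken by Enforcer. Your ``closing edge is still free'' step has no analogue of this bookkeeping, and the factor $h-k+1$ in your account arises for an unrelated reason.

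Beyond the structural mismatch, two concrete steps in your sketch would fail. First, the claim that one can maintain a set $W$ with $|W|\ge n/2$ on which $G_E$ has bounded degree until the end of the game is inconsistent with the game's accounting: if $\Delta(G_E[W])=O(1)$ on $\Omega(n)$ vertices when all edges are claimed, Avoider would have to own $\binom{|W|}{2}-O(n)=\Theta(n^2)$ edges, but she claims at most $O(n^2/b)=o(n^2)$ edges in total. (The paper avoids this by bounding \emph{Avoider's} degree, not Enforcer's, and by never requiring the stage to run to exhaustion of $E(X,Y)$.) Second, the assertion ``$|E(G_E)|+|E(G_A)|=O(n)$ inside $W$'' used in your double count is false in any regime where the strategy would actually produce threats; at the time you need to count free closing edges, Enforcer typically owns $\Theta(b)$ times as many edges as Avoider, and these do not fit under an $O(n)$ bound. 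Finally, the multiplicative growth $|\mathcal{P}_{j+1}|\gtrsim n\,|\mathcal{P}_j|/((b+1)k(h-k+1))$ is asserted by analogy rather than proved, and it concerns counts of (not necessarily disjoint) partial embeddings, whereas what is needed to generate $b+1$ genuinely distinct threats is a family of vertex-disjoint structures — exactly the gap that Lemma~\ref{lem:disj_copies} closes in the paper. In short, you have correctly identified the bipartite cycle as the central difficulty, but the resolution here requires the $T(H)$ construction and the three-stage free-edge bookkeeping; the proposal as written does not supply a substitute.
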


{We are now ready for the proof that $\fl_H(n)=\Theta(n^{1/m(H)})$ for any graph ${H}$ with $m(H)\le 1$. 	
	With the above three theorems in hand, it only remains to prove this for \emph{disconnected} graphs.}

\begin{proof}[\textbf{Proof of Theorem~\ref{lowerm1}.}]

Let $H$ be a (disconnected) graph with $m(H) \leq 1$ and $t \geq 2$ components $H_1, \ldots, H_t$ with at least one edge {\footnote{If $H$ has only one component containing edges, then forcing $H$ is equivalent to forcing this component, so the assertion follows from Theorems \ref{lowerTree}--\ref{thm:even_cycle}.}}. 
Roughly speaking, Enforcer's strategy for $H$ takes a ``divide and conquer'' approach, that partitions the vertices of $K_n$ into $t$ parts and forces Avoider to build one connected component of $H$ in each part. We provide the high-level idea of this approach, omitting some of the technical details (as these follow from the proofs of Theorems~\ref{lowerTree}, \ref{thm:odd_cycle} and \ref{thm:even_cycle}).
	
Clearly $m(H_i)\le m(H)$ for $i=1,2,\ldots,t$.
It follows from Theorems~\ref{lowerTree}, \ref{thm:odd_cycle} and \ref{thm:even_cycle}
that  there exists a constant $c>0$ such that if $b<cn^{1/m(H)}$, 
then Enforcer can force a copy of $H_i$ in the $(1 : b) $ game played on $K_{\lc n/t\rc}$, 
as well as in the game played on $K_{\lf n/t\rf}$. 
By analysing the proof of Theorems~~\ref{lowerTree}, \ref{thm:odd_cycle} and \ref{thm:even_cycle}, it is not hard to observe that Enforcer can force $H_i$ in each of these games even if he has to make a constant number, say $t^2$ in total, of extra moves, claiming at most $b$ edges in each of these moves.\footnote{Theorems~\ref{lowerTree}, \ref{thm:odd_cycle} and \ref{thm:even_cycle} cover all cases where $H_i$ is not a single edge. The case where $H_i$ is a single edge is trivial: As soon as Avoider picks one edge, Enforcer wins in this part of the game.} These extra moves affect only the constants in 
Theorems~\ref{thm:odd_cycle} and \ref{thm:even_cycle}, and in Theorem~\ref{upper_strategy} which is 
the key proof ingredient for Theorem~\ref{lowerTree}. 

Consider the following Enforcer's strategy for the $(1:b)$ $H$-game played on $K_n$. 
First Enforcer splits $V(K_n)$ into $t$ almost equal sets $V_1, V_2,\ldots, V_t$ 
and pretends he plays $t+\binom t2$ separate games $G_{i,j}$ with $i\le j$ and $i,j\in\{1,2,\ldots,t\}$.
Every game $G_{i,i}$ is played on the board $E(V_i)$, while for $i<j$ the game $G_{i,j}$ is played on $E(V_i,V_j)$.
Every time Avoider plays in $G_{i,j}$, Enforcer answers in the same game. 
In the game $G_{i,j}$ with $i<j$ Enforcer plays arbitrarily. In every game $G_{i,i}$ the aim of Enforcer is to 
force Avoider to build a copy of $H_i$. 

In each of the games $G_{i,j}$ it might happen (depending on Avoider's strategy) that Enforcer is the first player. It may also happen that in a game he has some extra (partial) moves to make but the number of such extra moves is at most the number of other games, i.e.~$t-1+\binom t2\le t^2$, and the number of extra edges Enforcer will have to add during these moves is at most $t^2 b$.
In any case, even if the above happens, Enforcer can apply the suitable strategy in each $G_{i,i}$ to force
Avoider to create a copy of $H_i$ in it. This way, Enforcer wins the $H$-game on $K_n$. 
Hence $\fl_{H}(n)\ge cn^{1/m(H)}$, as required.
\end{proof}

\subsection{Trees}
Here we derive Theorem~\ref{lowerTree} from Theorem~\ref{upper_strategy}. The proof of Theorem~\ref{upper_strategy} will be given later in Section~\ref{sec:blowup}. 
	
	\begin{proof}[\textbf{Proof of Theorem~\ref{lowerTree} from Theorem~\ref{upper_strategy}}]
		Let $H$ be a tree on $h\ge 3$ vertices and consider an $H$-game $(1 : b)$ on $K_n$. Our goal is to show that $f^-_H(n) = \Omega(n^{\frac{h}{h-1}})$. 
		It is well known that every graph with $n$ vertices and more than $hn$ edges contains a copy of every tree on $h$ vertices.
		It implies that there is a constant $c=c(h)$ such that for every $b+1\le c n$ and sufficiently large $n$,
		every Enforcer's strategy is a winning strategy. 
		
		{Consider the case $b+1 \ge 8hn$. Since we are only interested in the regime $b = O(n^{\frac{h}{h-1}})$, we may also assume that $b+1 \le \gamma n^\frac{h-1}{h-2}$} where $\gamma>0$ is the constant given by Theorem \ref{upper_strategy}.
		Then Enforcer has a strategy 
		{to force} either a copy of $H$ in Avoider's graph or more than
		$\gamma^{h-2}  n^{h} / (b+1)^{h-2}$ threats. As $\gamma^{h-2}  n^{h} / (b+1)^{h-2}>b$ for $8hn\le b+1\le \gamma^\frac{h-2}{h-1}n^\frac{h}{h-1}$, Enforcer wins the game for such $b$ due to Fact ~\ref{fact:threats2}. 
		
		It remains to consider the case $c n< b+1< 8hn$. Let $n'=\lfloor cn/(8h)\rfloor$.
		Then $8hn'\le b+1\le \gamma' (n')^\frac{h}{h-1}$ for every positive constant $\gamma'$, provided that $n$ is sufficiently large.
		At the first  stage of the game Enforcer selects all free edges from the set $E(K_n)\setm E(K_{n'})$, for some $K_{n'}\subs K_n$.
		Then the game is transformed into the $H$-game on $K_{n'}$, with additional assumption that in the first round of
		this new game Avoider is allowed to select as many edges as she wishes (maybe none), while Enforcer has to select
		$r$ edges, for some fixed $r\in\{0,1,\ldots,b\}$. In view of Theorem~\ref{upper_strategy}, analogously to the previous case, we can argue that Enforcer wins the $H$-game on $K_{n'}$ and thereby he wins the $H$-game on $K_n$ as well.
		
		In view of the above three cases, the assertion easily follows.
	\end{proof}

\subsection{Odd unicyclic graphs}
In this section we prove Theorem \ref{thm:odd_cycle}. Our main tool is the following supersaturation-type result. 
\begin{lemma}
\label{lem:threat_count}
Let $H$ be a unicyclic $h$-vertex graph. Then every graph $G$ has at least
$e(G) - (h-2)v(G)$ pairs $\{x,y\} \in \binom{V(G)}{2}$ such that  $E(G) \cup \big\{\{x,y\}\big\}$contains a copy of $H$ in which $\{x,y\}$ is a cycle-edge. 
\end{lemma}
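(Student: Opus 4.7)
The plan is to prove the lemma by induction on $v(G)$. The base case $v(G) = 0$ makes both sides zero. For the inductive step, I proceed in two cases depending on whether $G$ has a vertex of low degree.

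First, if $G$ has a vertex $w$ with $d_G(w) \leq h - 2$, then any good pair for $G - w$ stays good in $G$ (the same witness copy of $H$ inside $(G - w) \cup \{\{x,y\}\}$ also lies in $G \cup \{\{x,y\}\}$), and the arithmetic
\[
e(G - w) - (h - 2) v(G - w) \;=\; e(G) - d_G(w) - (h - 2)(v(G) - 1) \;\geq\; e(G) - (h - 2) v(G),
\]
which uses $d_G(w) \leq h - 2$, lets the inductive hypothesis on $G - w$ finish this case. So I may assume $\delta(G) \geq h - 1$ from now on.

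In this core case, I would build a candidate good pair for each edge of $G$ via greedy tree embedding. Fix any cycle edge $e^* = v_1 v_2$ of $H$ and let $T := H - e^*$, which is a tree on $h$ vertices in which $v_1, v_2$ are two distinguished vertices connected by the remaining $k - 1$ cycle edges. Also fix a template edge $f \in E(T)$, chosen incident to a leaf of $T$. For each edge $uv \in E(G)$, I would embed $T$ greedily in $G$ with $f \mapsto uv$: processing $T$ in a tree order starting from $f$, at every step the parent vertex in $T$ has at least $h - 1$ neighbors in $G$ while at most $h - 2$ other vertices are already used, so a legal extension always exists. The resulting embedding $\phi$ yields the candidate good pair $p(uv) := \{\phi(v_1), \phi(v_2)\}$, which is good by construction since $G \cup \{p(uv)\}$ then contains a copy of $H$ with $p(uv)$ playing the role of $e^*$.

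The main obstacle is to show that the map $p \colon E(G) \to \{\text{good pairs}\}$ has at least $e(G) - (h - 2) v(G)$ distinct images. Strict injectivity is too strong in general --- for instance $G = C_4$ with $H = C_3$ has only two good pairs but four edges, yet the target bound $e - v = 0$ is trivially met --- so some multiplicity of $p$ must be absorbed into the $-(h - 2) v(G)$ slack. To enforce enough distinctness, I would impose a linear order on $V(G)$ and run the greedy embedding canonically (always picking the smallest-index legal extension), so that $p(uv)$ becomes a deterministic function of $uv$, and then set up a charging scheme in which each collision of $p$ is charged to a vertex of $G$ with total charge bounded by $h - 2$. Exploiting the cyclic symmetry of $H$ --- by allowing the distinguished cycle edge $e^*$ or the template edge $f$ to rotate depending on the local structure of $G$ around $uv$ --- should provide the flexibility needed to make the charges fit within the $(h - 2) v(G)$ budget. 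This bookkeeping, especially in the odd-cycle case where $G$ might be close to bipartite, is the technical heart of the proof.
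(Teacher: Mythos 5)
Your base case and the reduction to $\delta(G)\ge h-1$ coincide with what the paper does. The gap is in the core case. There you set up a map $p$ from $E(G)$ to candidate good pairs via greedy tree-embedding and then assert that ``a charging scheme \dots should provide the flexibility needed'' to show $p$ attains at least $e(G)-(h-2)v(G)$ distinct values. That charging argument \emph{is} the lemma in this regime, and you explicitly acknowledge that you have not carried it out; you have only observed that naive injectivity fails and speculated that rotating the distinguished cycle edge might rescue the count. As it stands, the key inequality is unproved and the argument is incomplete. It is also unclear that the approach is workable: you have abandoned induction in the high-degree case in favour of a direct global count, and there is no evidence that the slack $(h-2)v(G)$ can absorb the collisions of a single canonical embedding.

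The paper sidesteps this entirely by \emph{keeping the induction running} inside the high-degree case. Once $\delta(G)\ge h-1$, rather than lower-bounding the total number of good pairs directly, one embeds $H\setminus T_k$ (i.e.\ $H$ minus one pendant tree) into $G$, obtaining two embedded cycle vertices $a=\phi(v_1)$ and $b=\phi(v_{k-1})$ with the property that every neighbour of $b$ outside the image yields a good pair with $a$, and symmetrically. This gives at least $d_G(a)+d_G(b)-2(h-2)$ good pairs incident to $\{a,b\}$. One then deletes $a$ and $b$, invokes the inductive hypothesis on $G-\{a,b\}$, and adds: the degree terms cancel and the bound $e(G)-(h-2)v(G)$ falls out with no collision bookkeeping. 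That two-vertex deletion and second invocation of induction is the idea your proposal is missing.
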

\begin{proof}
	For simplicity, we say that a pair $\{x,y\}$ as in the statement of the lemma is a {\em cycle-threat}.
	Let $n=v(G)$ and $m=e(G)$.
	The proof is by induction on $n$. For $n=1$, the lemma obviously holds since $m - (h-2)n \leq m=0$.
	
	Let $G$ be a graph with $n \geq 2$ vertices and $m$ edges. First assume that 
	$d_G(v) \leq h - 2$ for some $v \in V(G)$ and consider $G' = G \setminus \{v\}$. 
	By the induction hypothesis, the number of cycle-threats in $G'$ (and thus in $G$) is at least
	$e(G') -(h-2)v(G') \geq m - (h-2) - (h-2)(n - 1) = m - (h-2)n$,
	as required. From now on assume that $\delta(G) \geq h-1$. It is well known that under this assumption 
	every embedding of a subtree of any tree $T$ with $h$ vertices
	into $G$ can be extended into an embedding of $T$ into $G$. 
	
	Recall that $H$ consists of a cycle $v_1,\ldots,v_k$ and trees $T_1,\ldots,T_k$ such that $V(T_i)\cap \{v_1,\ldots,v_k\}=\{v_i\}$ for every $i\in [k]$. Consider an embedding 
	$\phi : H \setminus T_{k} \rightarrow G$. Put 
	$U := Im(\phi) \subseteq V(G)$. Then $|U| \leq h-1$.
	We claim that for every vertex
	$y \in N_G \big( \phi(v_{k-1}) \big) \setminus U$, the pair $\{\phi(v_1),y\}$ is a cycle-threat. 
	To prove this, it is enough to extend the embedding $\phi$ by putting $\phi(v_k)=y$ and then to extend it again,
	into an embedding of the tree $E(H) \setminus \{\{v_1,v_k\}\}$ into $G$. 
	
	Similarly, for every vertex $y \in N_G \big( \phi(v_{1}) \big) \setminus U$, the pair $\{\phi(v_{k-1}),y\}$ is a cycle-threat. 
	It follows that the number of cycle-threats that contain one of the vertices $\phi(v_1)$, $\phi(v_{k-1})$ is at least
	\begin{equation}\label{2_vertices}
		d_G \big( \phi(v_1) \big) + d_G \big( \phi(v_{k-1}) \big) - 2(|U|-1) \geq 
		d_G \big( \phi(v_1) \big) + d_G \big( \phi(v_{k-1}) \big) - 2(h - 2),
	\end{equation}
	as $|U| \le h-1$.
	
	Now consider the graph $G'$ obtained from $G$ by removing two vertices $\phi(v_1)$ and $\phi(v_{k-1})$. By the induction hypothesis, the number of cycle-threats in $G'$ is at least
	\begin{equation}\label{recursion}
		e(G') - (h-2)v(G') \geq m - 
		d_G \big( \phi(v_1) \big) - d_G \big( \phi(v_{k-1}) \big) - (h-2)(n-2).
	\end{equation} 
	Adding up (\ref{2_vertices}) and (\ref{recursion}), we get that $G$ contains at least $m - (h-2)n$ cycle-threats, as required. 
\end{proof}

\begin{proof}[\textbf{Proof of Theorem \ref{thm:odd_cycle}}]
	With Fact~\ref{fact:threats2} in mind, we will show that Enforcer has a strategy which guarantees that at some point 
	of the game, either Enforcer has already won or the number of $H$-threats is at least $b+1$. 
	
	At the start of the game Enforcer arbitrarily partitions $V(K_n)$ into two sets $X,Y$ of sizes $\lfloor n/2 \rfloor$ and 
	$\lceil n/2 \rceil$, respectively. Enforcer claims arbitrary edges from $E(X,Y)$ as long as he can. 
	Consider the point in the game just after the last edge in $E(X,Y)$ was claimed.  
	We claim that at this point either Enforcer has already won or there are at least $b+1$ $H$-threats. 
	Let $G_A$ denote Avoider's graph at this point and put $g_{X,Y} = |E(G_A) \cap E(X,Y)|$, $g_{X} = |E(G_A) \cap E(X)|$ and 
	$g_{Y} = |E(G_A) \cap E(Y)|$. Since every edge in $E(X,Y)$ was already claimed, the number of edges claimed by 
	Avoider up to this point is at least 
	$$ e(G_A) \geq \frac{|E(X,Y)| - b}{b+1} \geq \frac{(n^2-1)/4 - b}{b+1} \geq \frac{n^2}{4(b+1)} - 1. $$
	As $e(G_A) = g_{X,Y} + g_{X} + g_{Y}$, one of the numbers 
	$g_{X,Y}, g_X, g_Y$ is not smaller than $n^2/(12(b+1)) - 1$. 
	By our choice of $b$ we have
	$b+1 \leq n/12h \leq n^2/(12(h-1)n + 12)$,
	implying that
	$n^2/(12(b+1)) - 1 \geq (h-1)n$. 
	Assume first that $g_{X,Y} \geq (h-1)n$. 
	Let $G'$ be the graph whose edges are $E(G_A) \cap E(X,Y)$. 
	Then $e(G') = g_{X,Y} \geq (h-1)n$. By Lemma \ref{lem:threat_count}, there are at least 
	$e(G') - (h-2)n \geq n \geq b+1$ pairs of vertices $\{u,v\} \in \binom{V(G')}{2}$ such that $G' \cup \big\{\{u,v\}\big\}$ 
	contains a copy of $H$ in which $\{u,v\}$ is a cycle-edge. Let $\{u,v\}$ be such a pair. Since the cycle in $H$ is odd and $G'$
	only contains edges from $E(X,Y)$, either $u,v \in X$ or $u,v \in Y$. Since Enforcer only claims edges
	between $X$ and $Y$, the edge $\{u,v\}$ is either free or taken by Avoider. If $\{u,v\}$ is taken by Avoider then her graph
	contains a copy of $H$, implying that Enforcer has already won the game. Otherwise, $\{u,v\}$ is an $H$-threat. 
	We conclude that unless Enforcer had already won, there are at least $b + 1$ $H$-threats, as required.
	
	Now assume that $g_X \geq (h-1)n$. In this case let $G'$ be the graph whose edges are $E(G_A) \cap E(X)$. 
	By Lemma \ref{lem:threat_count} and the same argument as in the previous case, either Enforcer had already won or $X$
	contains at least $b+1$ $H$-threats, as required. 
	The case $g_Y \geq (h-1)n$ is handled analogously. This completes the proof of the theorem.   
\end{proof}

\subsection{Even unicyclic graphs}
The aim of this section is to prove Theorem \ref{thm:even_cycle}. For this purpose, we will need the following folklore result. Since we have not been able to find a reference for it, we include a proof here.
\begin{lemma}\label{lem:disj_copies}
Let $T$ be a tree with $t$ vertices. Let $G$ be a graph with maximum degree $\Delta$. Then $G$ contains at least 
$(e(G) - (t-2)v(G))/(t\Delta)$ pairwise vertex-disjoint copies of $T$.  
\end{lemma}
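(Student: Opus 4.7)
The plan is to prove this by a greedy removal argument. Starting from $G_0 := G$, I will iteratively extract a copy $T_i$ of $T$ from $G_{i-1}$ (whenever one exists) and pass to $G_i := G_{i-1} - V(T_i)$, stopping at step $k$ once no copy of $T$ remains in $G_k$. The copies $T_1, \ldots, T_k$ are pairwise vertex-disjoint by construction, so it suffices to show $k \geq (e(G) - (t-2)v(G))/(t\Delta)$.

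The argument relies on two simple observations. The first is the standard degeneracy fact that any graph $F$ with $e(F) > (t-2)v(F)$ contains a copy of $T$: iteratively delete vertices of degree at most $t-2$; this preserves the inequality $e(F) > (t-2)v(F)$ (each deletion drops $v$ by $1$ and $e$ by at most $t-2$), so the process terminates at a nonempty subgraph with minimum degree at least $t-1$, and any such subgraph contains every tree on $t$ vertices by the greedy one-vertex-at-a-time embedding already used in the proof of Lemma~\ref{lem:threat_count}. The second observation is that each step of the removal procedure deletes at most $t\Delta$ edges, since $|V(T_i)|=t$ and every vertex of $G$ has degree at most $\Delta$; iterating, $e(G_k) \geq e(G) - kt\Delta$ and $v(G_k) \leq v(G)$.

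To finish, note that when the process stops $G_k$ contains no copy of $T$, so the contrapositive of the first observation gives $e(G_k) \leq (t-2)v(G_k) \leq (t-2)v(G)$. Combining this with the lower bound $e(G_k) \geq e(G) - kt\Delta$ yields $e(G) - kt\Delta \leq (t-2)v(G)$, i.e.\ $k \geq (e(G) - (t-2)v(G))/(t\Delta)$, as required. I do not anticipate any real obstacle: the argument is routine, and the only non-trivial ingredient is the folklore tree-embedding fact, whose short proof is essentially already contained in the paper. Moreover, if the right-hand side is non-positive the statement is vacuous, so no separate trivial case needs to be treated.
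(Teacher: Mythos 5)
Your proof is correct and follows essentially the same approach as the paper's: the paper proves the bound by induction on $v(G)$, which when unwound is precisely your greedy extraction argument (remove a copy of $T$ when $e > (t-2)v$, note that each removal costs at most $t\Delta$ edges, and stop when the degenerate threshold is reached). Your direct counting via $e(G_k) \geq e(G) - kt\Delta$ and $e(G_k) \leq (t-2)v(G_k) \leq (t-2)v(G)$ is a slightly cleaner way of packaging the same computation, and your handling of the degeneracy/embedding fact — noting that a graph of minimum degree at least $t-1$ automatically has at least $t$ vertices — is sound.
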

\begin{proof}
	Let $n = v(G)$ and $m = e(G)$. The proof is by induction on $n$. The base case is $n < t$. It is easy to check that in this case 
	$m \leq \binom{n}{2} \leq (t-2)n$, so there is nothing to prove. 
	
	Suppose that 
	$n \geq t$. If $m \leq (t-2)n$, then the lemma holds trivially. Assume, then, that 
	$m > (t-2)n$. It is well known  that if $n \geq t$ and $m > (t-2)n$, then $G$ contains a copy of every tree on $t$ vertices. Let 
	$u_1,\ldots,u_t \in V(G)$ be the vertices of a copy of $T$ in $G$, and let $G'$ be the graph obtained from $G$ by removing the vertices $u_1,\ldots,u_t$. We have $v(G') = n-t$ and $e(G') \geq m - t\Delta$. By the induction hypothesis, $G'$ contains at least 
	$$\frac{e(G') - (t-2)v(G')}{t\Delta} \geq 
	\frac{m - t\Delta - (t-2)n}{t\Delta} \geq 
	\frac{m - (t-2)n}{t\Delta} - 1
	$$ 
	pairwise vertex-disjoint copies of $T$. Now, the copy of $T$ on $\{u_1,\dots,u_t\}$ is clearly disjoint from every copy in $G'$, 
	giving a total of $(m - (t-2)n)/(t\Delta)$ pairwise vertex-disjoint copies of $T$.
\end{proof}

\begin{proof}[\textbf{Proof of Theorem~\ref{thm:even_cycle}}]
	 We will describe a strategy for Enforcer which guarantees that at some point of the game either Enforcer has already won or the number of $H$-threats is at least $b+1$ (implying that Enforcer wins by Fact~\ref{fact:threats2}). 
	We start by defining a tree $T(H)$ which will play an important role in Enforcer's strategy.
	
	Suppose that the unique cycle in $H$ is $v_1,\dots,v_k$. Let $T' = T'(H)$ be the tree obtained from $H$ by contracting the edges of the cycle $v_1,\dots,v_k$ into one vertex $v$.
	Let $T(H)$ be the graph which consists of a path $u_1,\dots,u_{3k/2}$ and $3k/2$ pairwise vertex-disjoint copies of $T'$ in which $u_1,\dots,u_{3k/2}$ play the role of $v$. See Figure 1 for an example of the definition of $T(H)$.  
	Define $t(H) := v(T(H))$.
	It is easy to see that $T(H)$ is a tree and that  
	\begin{equation}\label{eq:t_bound}
	t(H) \leq 3k(h-k+1)/2.
	\end{equation}
	The following observation follows immediately from the definition of $T(H)$. 
	\begin{fact}\label{obs:copy_from_tree}
		Let $P$ and $Q$ be two vertex-disjoint copies of $T(H)$ in a graph $G$, and let $p_1,\dots,p_{3k/2}$ (respectively,
		$q_1,\dots,q_{3k/2}$) be the vertices of $P$ (respectively, $Q$) which play the roles of $u_1,\dots,u_{3k/2}$. 
		If there are 
		$\alpha,\beta,\gamma,\delta \in \{1,\dots,3k/2\}$ for which 
		$\left\{ p_{\alpha},q_{\beta} \right\}, \left\{ p_{\gamma},q_{\delta} \right\} \in E(G)$ and 
		$\left| \alpha - \gamma \right| = \left| \beta - \delta \right| = k/2 - 1$, then $G[P \cup Q]$ contains a copy of $H$. 
	\end{fact}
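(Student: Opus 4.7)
The plan is to exhibit a copy of $H$ in $G[P \cup Q]$ by constructing a $k$-cycle from the two cross-edges together with path-segments of $P$ and $Q$, and then attaching to each vertex of this cycle one of the trees $T_1,\dots,T_k$ drawn from the nearby copy of $T'$.

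First I would assume without loss of generality that $\alpha < \gamma$, so $\gamma = \alpha + k/2 - 1$. Consider the two segments $P' := p_\alpha,p_{\alpha+1},\dots,p_\gamma$ inside the path of $P$ and $Q' := q_\beta,\dots,q_\delta$ (taken in either direction, depending on the sign of $\delta-\beta$) inside the path of $Q$; each has exactly $k/2$ vertices and $k/2 - 1$ edges. Concatenating $P'$, the edge $\{p_\gamma,q_\delta\}$, $Q'$ traversed backwards to $q_\beta$, and finally the edge $\{q_\beta,p_\alpha\}$ produces a closed walk with $k/2 + k/2 = k$ vertices and $(k/2-1)+1+(k/2-1)+1 = k$ edges. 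Since $P$ and $Q$ are vertex-disjoint and the indices used lie within $P'$ and $Q'$ respectively, this is a genuine cycle $C$ of length $k$ in $G[P \cup Q]$.

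Next I would use the structure of $T(H)$. By construction, each of the $3k/2$ vertices $p_i$ (resp.\ $q_j$) is the root of a copy of $T'$ contained in $P$ (resp.\ $Q$), and these $3k/2$ copies inside each of $P$ and $Q$ are pairwise vertex-disjoint and intersect the underlying path only at the roots. In particular, for each of the $k$ vertices $w \in V(C)$, there is a copy $T'_w$ of $T'$ in $G[P\cup Q]$ rooted at $w$, and the copies $\{T'_w : w \in V(C)\}$ are pairwise vertex-disjoint and meet $V(C)$ only at their roots. Recall that $T'$ was obtained from $H$ by contracting the cycle $v_1,\dots,v_k$ to a single vertex $v$, so $T'$ contains pairwise (internally) vertex-disjoint copies of each of $T_1,\dots,T_k$, all sharing only the root $v$; hence each $T'_w$ contains, rooted at $w$, a copy of any prescribed $T_i$.

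Finally I would label the vertices of $C$ as $w_1,w_2,\dots,w_k$ in cyclic order and, inside $T'_{w_i}$, select the subtree playing the role of $T_i$ attached at $w_i$. The cycle $C$ together with these $k$ attached trees is vertex-disjoint (by the disjointness of the $T'_w$'s), and the union precisely matches the structure of $H$ (a $k$-cycle with tree $T_i$ attached at each $v_i$). The only subtlety, which I expect to be the main routine point rather than a real obstacle, is checking that the two cases $\beta<\delta$ and $\delta<\beta$ both yield a cycle of length exactly $k$; both are handled identically thanks to the hypothesis $|\beta-\delta|=k/2-1$.
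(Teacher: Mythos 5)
Your proof is correct, and it is essentially the argument the paper has in mind (the paper simply asserts the fact "follows immediately from the definition of $T(H)$" and leaves it unproved). Your construction — piecing a $k$-cycle together from the two cross-edges and the two $k/2$-vertex path segments, then using the pairwise-disjoint rooted copies of $T'$ to attach each pendant tree $T_i$ at the appropriate cycle vertex — is exactly the natural reading, and your bookkeeping (vertex counts, disjointness of the $T'$-copies from the path and from each other, and the $\beta<\delta$ vs.\ $\delta<\beta$ symmetry) is all sound.
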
 
	\begin{figure}\label{figure:T(H)}
		\begin{center}
			\begin{tabular}{ c c c} 
				\begin{tikzpicture}
				\draw[fill] (0.5,0.5) circle [radius=1pt];
				\draw[fill] (0.5,-0.5) circle [radius=1pt];
				\draw[fill] (-0.5,0.5) circle [radius=1pt];
				\draw[fill] (-0.5,-0.5) circle [radius=1pt];
				\draw[fill] (1.5,0.5) circle [radius=1pt];
				\draw[fill] (2.5,0.5) circle [radius=1pt];
				\draw[fill] (-1.5,0.5) circle [radius=1pt];
				\draw[fill] (-0.5,1.5) circle [radius=1pt];
				
				\node at (0,-1) {$H$}; 
				
				\draw (0.5,0.5) -- (0.5,-0.5);
				\draw (0.5,-0.5) -- (-0.5,-0.5);
				\draw (-0.5,-0.5) -- (-0.5,0.5);
				\draw (-0.5,0.5) -- (0.5,0.5);
				\draw (0.5,0.5) -- (1.5,0.5);
				\draw (1.5,0.5) -- (2.5,0.5);
				\draw (-0.5,0.5) -- (-1.5,0.5);
				\draw (-0.5,0.5) -- (-0.5,1.5);
				
				\end{tikzpicture}
				&
				\begin{tikzpicture}
				
				\draw[fill] (0,0) circle [radius=1pt];
				\draw[fill] (1,0) circle [radius=1pt];
				\draw[fill] (2,0) circle [radius=1pt];
				\draw[fill] (0,1) circle [radius=1pt];
				\draw[fill] (-1,0) circle [radius=1pt];
				
				\node at (0.5,-1) {$T'(H)$};
				
				\draw (0,0) -- (1,0);
				\draw (1,0) -- (2,0);
				\draw (0,0) -- (-1,0);
				\draw (0,0) -- (0,1);
				
				\node[below] at (0,0) {$v$};
				
				\end{tikzpicture}
				&
				\begin{tikzpicture}
				
				\draw[fill] (-2,0) circle [radius=1pt]; 
				\draw[fill] (-1,0) circle [radius=1pt]; 
				\draw[fill] (0,0) circle [radius=1pt]; 
				\draw[fill] (1,0) circle [radius=1pt]; 
				\draw[fill] (2,0) circle [radius=1pt]; 
				\draw[fill] (3,0) circle [radius=1pt]; 
				
				\node[below left] at (-2,0) {$u_1$}; 
				\node[below left] at (-1,0) {$u_2$}; 
				\node[below left] at (0,0) {$u_3$}; 
				\node[below left] at (1,0) {$u_4$}; 
				\node[below left] at (2,0) {$u_5$}; 
				\node[below left] at (3,0) {$u_6$};
				
				\draw[fill] (-2,1) circle [radius=1pt]; 
				\draw[fill] (-2,2) circle [radius=1pt]; 
				\draw[fill] (-1,1) circle [radius=1pt]; 
				\draw[fill] (-1,2) circle [radius=1pt]; 
				\draw[fill] (0,1) circle [radius=1pt]; 
				\draw[fill] (0,2) circle [radius=1pt];
				\draw[fill] (1,1) circle [radius=1pt]; 
				\draw[fill] (1,2) circle [radius=1pt]; 
				\draw[fill] (2,1) circle [radius=1pt]; 
				\draw[fill] (2,2) circle [radius=1pt]; 
				\draw[fill] (3,1) circle [radius=1pt]; 
				\draw[fill] (3,2) circle [radius=1pt];
				
				\draw[fill] (-0.195,-0.980) + (-2,0) circle [radius=1pt]; 
				\draw[fill] (0.195,-0.980) + (-2,0) circle [radius=1pt];  
				\draw[fill] (-0.195,-0.980) + (-1,0) circle [radius=1pt]; 
				\draw[fill] (0.195,-0.980) + (-1,0)circle [radius=1pt];  
				\draw[fill] (-0.195,-0.980) + (0,0)circle [radius=1pt]; 
				\draw[fill] (0.195,-0.980) + (0,0)circle [radius=1pt];  
				\draw[fill] (-0.195,-0.980) + (1,0)circle [radius=1pt]; 
				\draw[fill] (0.195,-0.980) + (1,0)circle [radius=1pt];  
				\draw[fill] (-0.195,-0.980) + (2,0)circle [radius=1pt]; 
				\draw[fill] (0.195,-0.980) + (2,0)circle [radius=1pt];  
				\draw[fill] (-0.195,-0.980) + (3,0)circle [radius=1pt]; 
				\draw[fill] (0.195,-0.980) + (3,0)circle [radius=1pt]; 
				
				\draw (-2,0) -- (-1,0);
				\draw (-1,0) -- (0,0);
				\draw (0,0) -- (1,0);
				\draw (1,0) -- (2,0);
				\draw (2,0) -- (3,0);
				
				\draw (-2,0) -- (-2,1);
				\draw (-2,1) -- (-2,2);
				\draw (-1,0) -- (-1,1);
				\draw (-1,1) -- (-1,2);
				\draw (0,0) -- (0,1);
				\draw (0,1) -- (0,2);
				\draw (1,0) -- (1,1);
				\draw (1,1) -- (1,2);
				\draw (2,0) -- (2,1);
				\draw (2,1) -- (2,2);
				\draw (3,0) -- (3,1);
				\draw (3,1) -- (3,2);
				
				\draw (-2,0) -- (-2-0.195,-0.980);
				\draw (-2,0) -- (-2+0.195,-0.980);
				\draw (-1,0) -- (-1-0.195,-0.980);
				\draw (-1,0) -- (-1+0.195,-0.980);
				\draw (0,0) -- (0-0.195,-0.980);
				\draw (0,0) -- (0+0.195,-0.980);
				\draw (1,0) -- (1-0.195,-0.980);
				\draw (1,0) -- (1+0.195,-0.980);
				\draw (2,0) -- (2-0.195,-0.980);
				\draw (2,0) -- (2+0.195,-0.980);
				\draw (3,0) -- (3-0.195,-0.980);
				\draw (3,0) -- (3+0.195,-0.980);
				
				\node at (0.5,-2) {$T(H)$};  
				
				\end{tikzpicture} 
			\end{tabular}
		\end{center}
		
		\caption{The tree $T(H)$ - an example}
	\end{figure}
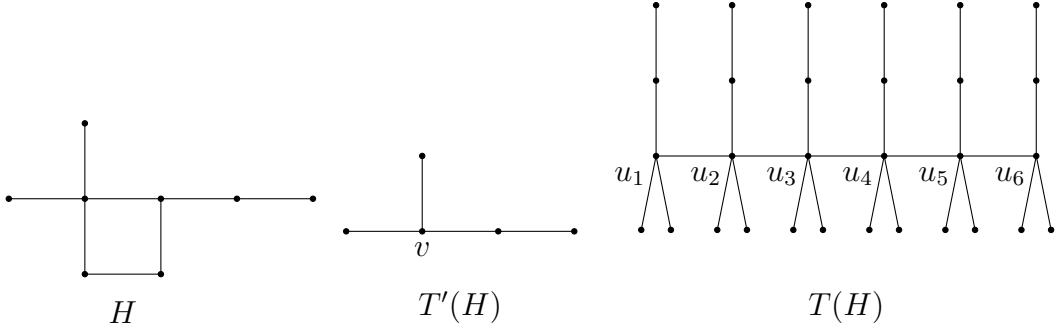
	As in the odd cycle case, Enforcer arbitrarily partitions $V(K_n)$ into two sets $X,Y$ of sizes 
	$\lfloor n/2 \rfloor$ and $\lceil n/2 \rceil$, respectively.  Enforcer's strategy has three stages. Roughly speaking,
	in the first stage Enforcer makes sure that the maximum degree in Avoider's graph inside each of the sets $X$ and $Y$ is not
	large. 
	We will show that unless there are many $H$-threats when the first stage ends, the sets $X$ and $Y$ contain many copies of $T(H)$. The only edges relevant to Enforcer's strategy in later stages are the edges connecting these copies of $T(H)$. Thus, in the second stage Enforcer claims all irrelevant free edges, making sure that they are not free in the third stage.
	This enables Enforcer to force many $H$-threats in the third stage. A precise description of Enforcer's strategy follows. 
	To make the writing shorter, we introduce the following notation. 
	\begin{equation*}
	\St(z) = \begin{cases}
	\{e \in E(X) : z \in e\} & \text{ for }z \in X, \\
	\{e \in E(Y) : z \in e\} & \text{ for }z \in Y.
	\end{cases}
	\end{equation*} 
	
	\paragraph{Stage I}
	The first stage starts at the beginning of the game and ends at the moment when there are no more 
	free edges in $E(X) \cup E(Y)$. The stage may end in the middle of Enforcer's move. 
	In this stage Enforcer only claims edges from $E(X) \cup E(Y)$. 
	In his first round (if he is the first player), Enforcer claims $b$ arbitrary edges from $E(X) \cup E(Y)$.
	Then in every round he responds to Avoider's move as follows. 
	If Avoider claimed an edge in $E(X,Y)$ then Enforcer claims $b$ arbitrary free edges from $E(X) \cup E(Y)$.  
	Suppose now that Avoider claimed an edge in 
	$E(X) \cup E(Y)$, say $\{u,v\}$. Enforcer claims 
	$\lfloor b/2 \rfloor$ free edges from $\St(u)$ and 
	$\lceil b/2 \rceil$ free edges from $\St(v)$ (the roles of $u$ and $v$ are decided arbitrarily). 
	If Enforcer cannot follow this rule (for example, if there are less than $\lfloor b/2 \rfloor$ free edges in $\St(u)$) then Enforcer completes his move by claiming additional arbitrary edges from $E(X) \cup E(Y)$. 
	
	It is clear that Enforcer can follow his strategy for the first stage. 
	We now show that by following his strategy, Enforcer
	succeeds in keeping the maximum degree in $G_A[X]$ and $G_A[Y]$ not too high.  
	
	\begin{claim}\label{claim:max_degree}
		Throughout the game it holds that 
		$\Delta\left( G_{A}[X] \right) \leq 3|X|/(b+1)$ and 
		$\Delta\left( G_{A}[Y] \right) \leq 3|Y|/(b+1)$.   
	\end{claim}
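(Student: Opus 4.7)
The plan is to bound $\Delta(G_A[X])$; the bound on $\Delta(G_A[Y])$ will follow by the identical argument. My first observation is that Stage I ends only once $E(X) \cup E(Y)$ contains no free edges, so neither player can claim an edge inside $X$ or inside $Y$ after Stage I, and it suffices to control $d_{G_A[X]}(v)$ at the end of Stage I, for each fixed $v \in X$.

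I would then exploit the central feature of Enforcer's Stage I rule: every time Avoider claims an edge $\{u,v\}$ inside $X$, Enforcer replies by claiming at least $\lfloor b/2 \rfloor$ free edges of $\St(v)$, unless fewer than $\lfloor b/2 \rfloor$ such free edges remain, in which case he takes all of them. In the latter ``deficient'' case no edge of $\St(v)$ is free after Enforcer's response, so Avoider cannot play at $v$ again. Consequently, if I set $d := d_{G_A[X]}(v)$ at the end of Stage I, then Enforcer must have replied successfully (with at least $\lfloor b/2 \rfloor$ new edges at $v$) to each of Avoider's first $d-1$ moves at $v$.

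The remaining step is a short counting argument. Just before Avoider's $d$-th move at $v$, the vertex $v$ has $d-1$ edges held by Avoider inside $X$ and at least $(d-1)\lfloor b/2 \rfloor$ edges held by Enforcer inside $X$, and at least one further edge of $\St(v)$ must still be free in order for Avoider to play there. This yields
\[
(d-1)\bigl(\lfloor b/2 \rfloor + 1\bigr) \leq |X| - 2.
\]
Combining $\lfloor b/2 \rfloor + 1 \geq (b+1)/2$ with the hypothesis of Theorem~\ref{thm:even_cycle} on $b$, which forces $b+1 \leq |X|$, I expect to conclude $d \leq 1 + 2(|X|-1)/(b+1) \leq 3|X|/(b+1)$. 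The only genuinely delicate point is the bookkeeping around the single move at which Enforcer may fall short of a full reply, but the observation that such a failure immediately freezes $\St(v)$ ensures it can occur at most at Avoider's very last move at $v$, and therefore does not affect the count.
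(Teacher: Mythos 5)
Your proposal is correct and follows essentially the same route as the paper: you use the observation that a deficient Enforcer reply at $v$ freezes $\St(v)$ and hence can happen at most once (at Avoider's last move at $v$), then count edges incident to $v$ inside $X$ to get $(d-1)(\lfloor b/2\rfloor+1)\le |X|-2$, which is the same inequality the paper derives. The only cosmetic differences are that you count just before Avoider's $d$-th move rather than at the end of Stage I, and you explicitly flag the implicit use of $b+1\le |X|$ (which indeed follows from the hypothesis of Theorem~\ref{thm:even_cycle}) in the final step $2|X|/(b+1)+1\le 3|X|/(b+1)$, a step the paper leaves silent.
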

	\begin{proof}
		We only prove the statement for $X$, as the proof for $Y$ is similar.  
		Let $x \in X$ and assume that $d_{G_{A}[X]}(x) = d$ at the end of the first stage. By Enforcer's strategy, every time Avoider claimed an edge from $\St(x)$, Enforcer immediately claimed at least $\lfloor b/2 \rfloor$ such edges, unless there were less than $\lfloor b/2 \rfloor$ free edges remaining in $\St(x)$, which could only happen at the last time Avoider claimed an edge from $\St(x)$. Therefore Enforcer has claimed at least $\lfloor b/2 \rfloor (d-1)$ edges from $\St(x)$. Since all edges in $E(X)$ have been claimed by the end of the first stage, we have
		$d + \left\lfloor \frac{b}{2} \right\rfloor \cdot (d-1) \leq 
		\left| \St(x) \right| = |X| - 1,$ implying that
		$d \leq 
		(|X| - 1 + \left\lfloor \frac{b}{2} \right\rfloor)/
		(\left\lfloor \frac{b}{2} \right\rfloor + 1) 
		\leq
		2|X|/(b+1) + 1 \leq 
		3|X|/(b+1)$. 
	\end{proof} 
	
	Now we prove three  claims about the position at the end of the first stage. 
	Put $g_{X} = |E(G_A) \cap E(X)|$, $g_{Y} = |E(G_A) \cap E(Y)|$ and 
	$g_{X,Y} = |E(G_A) \cap E(X,Y)|$. 
	
	\begin{claim}\label{claim:many_bipartite_edges}
		If $g_{X,Y} \geq (h-1)n$ then either Enforcer has already won or there are at least $b+1$ $H$-threats. 
	\end{claim}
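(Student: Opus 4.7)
The plan is to apply Lemma~\ref{lem:threat_count} to the bipartite subgraph $G' = (X \cup Y, E(G_A) \cap E(X,Y))$ at the moment Stage~I ends. Under the assumption $g_{X,Y} \geq (h-1)n$ we have $e(G') = g_{X,Y} \geq (h-1)n$ and $v(G') \leq n$, so the lemma produces at least $e(G') - (h-2)v(G') \geq n$ distinct \emph{cycle-threat} pairs $\{u,v\} \in \binom{X \cup Y}{2}$ with the property that $E(G') \cup \{\{u,v\}\}$ contains a copy of $H$ in which $\{u,v\}$ plays the role of a cycle-edge.

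The crucial step, which is precisely where the hypothesis that $H$ is \emph{even} unicyclic enters, is to show that every such cycle-threat pair $\{u,v\}$ necessarily lies in $E(X,Y)$. I would fix a copy of $H$ witnessing the cycle-threat, inside $E(G') \cup \{\{u,v\}\}$, in which one cycle-edge is realised by $\{u,v\}$; the remaining $k-1$ cycle-edges then form a path from $u$ to $v$ inside $G'$. Because $G'$ is bipartite between $X$ and $Y$ and $k-1$ is odd, the endpoints of this path must lie on opposite sides of the bipartition, forcing $\{u,v\} \in E(X,Y)$.

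Finally, I would argue that each cycle-threat pair is either a genuine $H$-threat in the actual game or else already witnesses that Enforcer has won. During Stage~I Enforcer only claims edges from $E(X) \cup E(Y)$, so no pair in $E(X,Y)$ has been claimed by Enforcer. If some cycle-threat pair $\{u,v\}$ already lies in $E(G_A)$, then $E(G_A) \supseteq E(G') \cup \{\{u,v\}\}$ already contains a copy of $H$ and Enforcer has won. Otherwise every cycle-threat pair is free and hence is an $H$-threat; since the hypothesis $b+1 \leq n/(200k(h-k+1))$ certainly gives $b+1 \leq n$, at least $b+1$ threats are produced, as required. The only subtlety in the argument is the parity step, which goes through precisely because $k$ is even---in sharp contrast with the proof of Theorem~\ref{thm:odd_cycle}, where cycle-threats had to be routed through $E(X)$ or $E(Y)$ instead.
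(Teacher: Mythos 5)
Your proposal is correct and follows essentially the same route as the paper: apply Lemma~\ref{lem:threat_count} to the bipartite graph $G'$, use the even-cycle parity to place every cycle-threat in $E(X,Y)$, and observe that Enforcer has touched no such edge. The parity step you spell out is left slightly implicit in the paper but is the same argument.
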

	\begin{proof}
		Let $G'$ be the graph whose edges are $E(G_A) \cap E(X,Y)$. 
		Then $e(G') = g_{X,Y} \geq (h-1)n$. By Lemma \ref{lem:threat_count}, there are at least $e(G') - (h-2)n \geq n \geq b+1$ 
		pairs of vertices $\{u,v\} \in \binom{V(G')}{2}$ such that 
		$G' \cup \big\{\{u,v\}\big\}$ contains a copy of $H$ in which $\{u,v\}$ is a cycle-edge. Let $\{u,v\}$ be such a pair. Since the cycle in $H$ is even and $G'$ only contains edges between $X$ and $Y$, we must have that either 
		$u \in X$ and $v \in Y$ or vice versa. Since in the first stage Enforcer only claimed edges from $E(X) \cup E(Y)$, the edge $\{u,v\}$ is either free or has been taken by Avoider. In the latter case, Avoider's graph contains a copy of $H$, implying that Enforcer has already won. In the former case, $\{u,v\}$ is an $H$-threat. We conclude that unless Enforcer has already won, there are at least $b + 1$ $H$-threats, as required.
	\end{proof}
	
	\begin{claim}\label{claim:edges_in_sets}
		If $g_{X,Y} < (h-1)n$ then $g_X > 25t(H)|X|$ and 
		$g_Y > 25t(H)|Y|$. 
	\end{claim}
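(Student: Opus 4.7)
By symmetry, it suffices to prove $g_X > 25 t(H) |X|$; the argument for $g_Y$ follows by swapping $X$ and $Y$. Since all of $E(X) \cup E(Y)$ has been claimed by the end of Stage I, the identity $|E(X)| = g_X + (\text{Enforcer's edges in } E(X))$ holds, so my plan is to upper-bound the Enforcer's edge count in $E(X)$. I would classify the Stage I rounds by where Avoider's edge lies: $g_X$ rounds are of type $X$, $g_Y$ of type $Y$, and $g_{X,Y}$ of type $XY$. I would also refine Enforcer's Stage I strategy in two harmless ways: (i) in each type $XY$ round Enforcer places exactly $\lfloor b/2 \rfloor$ of his $b$ arbitrary edges in $E(X)$ and the remaining $\lceil b/2 \rceil$ in $E(Y)$; (ii) whenever an overflow occurs at an Avoider edge $\{u,v\}$, Enforcer first exhausts every remaining free edge of $\St(u) \cup \St(v)$ before completing his move with arbitrary free edges from $E(X) \cup E(Y)$.

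With these conventions, a type $X$ round contributes at most $b$ edges to Enforcer's $E(X)$-tally, a type $XY$ round contributes at most $\lceil b/2 \rceil$ edges, and a type $Y$ round contributes only through overflow, since the default response lies entirely within $E(Y)$. The key step is bounding the total type $Y$ overflow. I would show that each $v \in Y$ is involved in at most one type $Y$ overflow round: once such an overflow occurs at Avoider's edge $\{u,v\}$, convention (ii) forces every edge of $\St(u) \cup \St(v)$ inside $E(Y)$ to be claimed, after which neither $u$ nor $v$ can appear in a subsequent type $Y$ Avoider move. Consequently there are at most $|Y|/2$ type $Y$ overflow rounds, contributing at most $b|Y|/2 \le bn/2$ edges to Enforcer's $E(X)$-tally.

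Combining these three bounds yields
\[
|E(X)| \le (b+1) g_X + \tfrac{b+1}{2}\, g_{X,Y} + \tfrac{bn}{2}.
\]
Plugging in $|E(X)| \ge \binom{\lfloor n/2 \rfloor}{2}$, the hypothesis $g_{X,Y} < (h-1)n$, the bound $b+1 \le n/(200 k(h-k+1))$, and the elementary inequality $k(h-k+1) \ge h$, a direct rearrangement gives $g_X \ge c \cdot k(h-k+1)\, n$ for a constant $c > 75/4$ (for instance $c = 24$ works once $n$ is sufficiently large). Since $t(H) \le 3 k(h-k+1)/2$ and $|X| \le (n+1)/2$, one has $25 t(H) |X| \le (75/4) k(h-k+1) (n+1)$, and the comparison $g_X > 25 t(H) |X|$ follows for all sufficiently large $n$.

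The main obstacle is the overflow bookkeeping for type $Y$ rounds: without the tie-breaking convention (ii), Enforcer's edges in $E(X)$ produced by type $Y$ responses could easily swamp the desired lower bound on $g_X$. Once convention (ii) is in place and the ``at most one overflow per vertex of $Y$'' property is established, everything reduces to a careful matching of the numerical constants $200$, $25$ and $3/2$ appearing in the theorem and in the definition of $t(H)$.
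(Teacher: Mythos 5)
Your proof takes essentially the same approach as the paper's: at the end of Stage I every edge of $E(X)$ has been claimed, so you bound Enforcer's share of $E(X)$ by classifying the rounds in which he claims an $E(X)$-edge by the location of Avoider's preceding move (in $E(X)$, in $E(X,Y)$, or an "overflow" round in $E(Y)$), and observe that overflow rounds cannot repeat vertices of $Y$. The paper reaches the same bound without refining the strategy (it uses $a_2\le b\,g_{X,Y}$ directly, and gets the overflow bound by noting that the triggering endpoints $y_i$ are pairwise distinct), and it also keeps the extra $+b$ accounting for Enforcer's possible opening move, which your inequality omits; these differences are only in lower-order terms and constants, and your version, once the near-exhaustion edge cases of convention (i) are acknowledged, yields the same conclusion.
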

	\begin{proof}
		We only prove the statement for $g_X$, as the proof for $g_Y$ is symmetric.
		According to Enforcer's strategy in the first stage, if Enforcer claims an edge from $E(X)$ in some round 
		(other than 
		{his} first, if he is the first player), then Avoider's preceding move must have been one of the following types: 
		\begin{enumerate}
			\item Avoider claimed an edge from $E(X)$.
			\item Avoider claimed an edge from $E(X,Y)$.
			\item Avoider claimed an edge $\{u,v\} \in E(Y)$ such that after this Avoider's move there were less than $\lceil  b/2 \rceil$ free edges in $\St(u)$ or less than $\lceil b/2 \rceil$ free edges in $\St(v)$. 
		\end{enumerate}
		For $i\in \{1,2,3\}$, let $a_i$ be the number of edges in $E(X)$ that Enforcer claimed after a move of type $i$ by Avoider. 
		It is clear that $a_1 \leq b \cdot g_X$ and $a_2 \leq b \cdot g_{X,Y}$. 
		Let $e_1,\dots,e_{\ell}$ be the edges claimed by Avoider in moves of type $3$. For $i \in [\ell]$, let $y_i$ be an endpoint of $e_i$ 
		such that after Avoider claimed $e_i$, there remained less than 
		$\lceil b/2 \rceil$ free edges in $\St(y_i)$. We claim that $y_i \neq y_j$ for every $1 \leq i < j \leq \ell$. 
		Suppose by contradiction that $y_i = y_j$ for some $i < j$. 
		After Avoider claimed $e_i$, Enforcer immediately claimed all remaining free edges in $\St(y_i)$, implying that Avoider could not
		have claimed $e_j$ later in the game, a contradiction. Since $y_1,\dots,y_{\ell}$ are distinct we get that
		$\ell \leq |Y| \leq \left\lceil n/2 \right\rceil$. Therefore
		$a_3 \leq \ell \cdot b \leq (n+1) b/2$.
		Observe that 
		$b + a_1 + a_2 + a_3 + g_{X} \ge \binom{|X|}{2}$,
		as the left hand side counts the number of edges in $E(X)$ claimed by either Avoider or Enforcer when Enforcer is the first player.  
		By this inequality and our assumption that 
		$g_{X,Y} < (h-1)n$, we get that
		\begin{equation*}
		\binom{|X|}{2} \le b + a_1 + a_2 + a_3 + g_X < b + (b+1)g_X + (h-1)nb + (n+1)b/2.
		\end{equation*}
		This implies 
		\begin{equation*}
		g_X > \frac{\binom{|X|}{2} - hnb}{b+1} \geq
		\frac{n|X|}{5(b+1)} \geq   
		25 \cdot \tfrac{3}{2}k(h - k + 1)|X| \geq 
		25t(H)|X|,
		\end{equation*}
		where we used $|X| \geq (n-1)/2$, (\ref{eq:t_bound}) and 
		$b + 1 \leq n/(200k(h - k + 1))$ which holds by assumption. 
	\end{proof}
	
	\begin{claim}\label{claim:tcopies}
		Suppose that there are less than $b+1$ $H$-threats at the end of Stage I. Then there are at least $8(b+1)$ pairwise disjoint copies of $T(H)$ in $G_A[X]$ and at least $8(b+1)$ pairwise disjoint copies of $T(H)$ in $G_A[Y]$.
	\end{claim}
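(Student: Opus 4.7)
The plan is to chain together Claims \ref{claim:max_degree}, \ref{claim:many_bipartite_edges} and \ref{claim:edges_in_sets} with Lemma \ref{lem:disj_copies}. The three claims together give control over both the number of edges in $G_A[X]$ (and $G_A[Y]$) and their maximum degree, which is exactly the input Lemma \ref{lem:disj_copies} requires in order to extract many pairwise vertex-disjoint copies of a prescribed tree.

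First, suppose that at the end of Stage I there are fewer than $b+1$ $H$-threats. Since the overarching goal is to produce either a win for Enforcer or at least $b+1$ threats, we may restrict attention to the case in which Enforcer has not already won; in the complementary case the subsequent stages are unnecessary, so there is nothing to prove. Under this extra assumption, the contrapositive of Claim \ref{claim:many_bipartite_edges} forces $g_{X,Y} < (h-1)n$, and then Claim \ref{claim:edges_in_sets} yields the edge lower bounds $g_X > 25\,t(H)|X|$ and $g_Y > 25\,t(H)|Y|$.

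Next I would apply Lemma \ref{lem:disj_copies} to $G_A[X]$ with $T = T(H)$. Claim \ref{claim:max_degree} gives $\Delta(G_A[X]) \le 3|X|/(b+1)$, so the lemma produces at least
\[
\frac{g_X - (t(H)-2)|X|}{t(H)\cdot 3|X|/(b+1)} \;\ge\; \frac{\bigl(25\,t(H)-(t(H)-2)\bigr)|X|\,(b+1)}{3\,t(H)\,|X|} \;\ge\; \frac{24\,t(H)\,|X|\,(b+1)}{3\,t(H)\,|X|} \;=\; 8(b+1)
\]
pairwise disjoint copies of $T(H)$ inside $G_A[X]$. The argument for $G_A[Y]$ is identical, using $g_Y > 25\,t(H)|Y|$ and $\Delta(G_A[Y]) \le 3|Y|/(b+1)$.

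There is no real obstacle in this step: once the three preceding claims are in place, the conclusion drops out by direct substitution. The constants were designed for this moment — the factor $25$ appearing in Claim \ref{claim:edges_in_sets} and the factor $3$ in Claim \ref{claim:max_degree} are tuned precisely so that the final count $8(b+1)$ of disjoint trees comes out, leaving room to use Fact \ref{obs:copy_from_tree} in the third stage to build $H$-copies from pairs of these trees connected by a few cross-edges.
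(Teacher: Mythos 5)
Your proof is correct and follows essentially the same route as the paper's: invoke Claim \ref{claim:max_degree} for the degree bound, Claims \ref{claim:many_bipartite_edges} and \ref{claim:edges_in_sets} for the edge count, and then apply Lemma \ref{lem:disj_copies} with the exact same arithmetic. The only (minor, and reasonable) addition is your explicit remark about discarding the case in which Enforcer has already won, which the paper leaves implicit.
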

	\begin{proof}
		Suppose that there are less than $b+1$ $H$-threats in $G_A$.
		By combining Claims \ref{claim:max_degree}, {\ref{claim:many_bipartite_edges}} and \ref{claim:edges_in_sets} we get that 
		$\Delta\left( G_A[X] \right) \leq 3|X|/(b+1)$
		and
		$e\left( G_A[X] \right) = g_X \geq 25t(H)|X|$.
		By Lemma \ref{lem:disj_copies}, $G_A[X]$ contains at least
		\begin{equation*}
		\frac{25t(H)|X| - (t(H)-2)|X|}{t(H)\cdot 3|X|/(b+1)} \geq 
		8(b+1)
		\end{equation*}
		pairwise disjoint copies of $T(H)$. 
		The proof for $G_A[Y]$ is similar. 
	\end{proof}
	
	\paragraph{Stage II}
	We may assume that at the beginning of the second stage there are less than $b+1$ $H$-threats (because otherwise Enforcer had already achieved his goal). Fixing
	\begin{equation*}
	r = 8(b+1),
	\end{equation*}
	Enforcer identifies $r$ pairwise vertex-disjoint copies of $T(H)$ in $G_A[X]$, denoted by $P_1,\ldots,P_r$, and $r$ pairwise vertex-disjoint copies of $T(H)$ in $G_A[Y]$, denoted by $Q_1,\ldots,Q_r$. This is possible in view of Claim~\ref{claim:tcopies}.
	For each $1 \leq i \leq r$, let $p^i_{1},\dots,p^i_{3k/2}$ (respectively, $q^i_{1},\dots,q^i_{3k/2}$) be the vertices of $P_i$ (respectively, $Q_i$) which play the roles of $u_1,\dots,u_{3k/2}$ (recall the definition of $T(H)$). 
	Define 
	\begin{equation*}
	E^* = \left\{ \{p^i_{\alpha},q^j_{\beta} \} : 
	1 \leq \alpha,\beta \leq 3k/2, 1 \leq i,j \leq r \right\}
	\end{equation*}
	and
	\begin{equation*}
	E^{**} = \left\{ \{p^i_{\alpha},q^j_{\beta} \} : 
	1 \leq \alpha,\beta < k/2, 1 \leq i,j \leq r \right\}.
	\end{equation*}
	In the second stage Enforcer claims arbitrary free edges {\em not} belonging to $E^*$. The second stage ends at the moment when all remaining free edges are in $E^*$.   
	
	\paragraph{Stage III}
	In the third stage Enforcer claims arbitrary edges from $E^{**}$. 
	The third stage ends at the moment that every edge from $E^{**}$ has been claimed. 
	
	\bigskip
	It is clear that Enforcer can follow his strategy for Stages II and III. 
	Let us analyse the position at the end of Stage III.
	Let $\mathcal{F}$ be the collection of all pairs 
	$(i,j) \in [r]^2$ such that Avoider claimed some edge in 
	$\left\{ \{p^i_{\alpha},q^j_{\beta}\} : 1 \leq \alpha,\beta \leq 3k/2 \right\}$. Let $(i,j) \in \mathcal{F}$ and suppose that Avoider claimed the edge
	$\{p^i_{\alpha},q^j_{\beta}\}$. It is easy to see that there are $\gamma$ and $\delta$ such that
	$k/2\leq \gamma \leq 3k/2$, $k/2\leq \delta \leq 3k/2$ and  
	$|\alpha - \gamma| = |\beta - \delta| = k/2 - 1$. 
	By Fact~\ref{obs:copy_from_tree}, adding the edge $\{p^i_{\gamma},q^j_{\delta}\}$ to $G_A$ would create a copy of $H$ in $G_A[P_i \cup Q_j]$. 
	Since $\gamma,\delta \geq k/2-1$ we have 
	$\{p^i_{\gamma},q^j_{\delta} \} \notin E^{**}$. Since in the third stage Enforcer only claimed edges in $E^{**}$, the edge
	$\{p^i_{\gamma},q^j_{\delta} \}$ is either free or has been claimed by Avoider. In the latter case Avoider's graph contains
	a copy of $H$, implying that Enforcer has already won. In the former case, $\{p^i_{\gamma},q^j_{\delta} \}$ is an $H$-threat.
	We conclude that either Enforcer has already won or the number of $H$-threats is at least $\left| \mathcal{F} \right|$. 
	
	It remains to show that $\left| \mathcal{F} \right| \geq b + 1$. 
	Let $a$ be the number of edges that Avoider claimed in the third stage, and note that the number of Enforcer's moves in the third stage is at most $a+1$. Let $m$ be the number of edges in $E^{*}$ that Avoider had claimed in the first and second stages. Then by the end of the third stage Avoider has claimed $a+m$ edges in $E^{*}$. From the definition of $E^{*}$, we see that there are at least $(a+m)/(3k/2)^2$ pairs $(i,j) \in [r]^2$ such that Avoider claimed some edge in 
	$\left\{ \{p^i_{\alpha},q^j_{\beta}\} : 1 \leq \alpha,\beta \leq 3k/2 \right\}$, that is, 
	$|\mathcal{F}| \geq (a+m)/(3k/2)^2$.
	Recall that Enforcer had not claimed any edge in $E^{*}$ in the first and second stages. Since by the end of the third stage every edge in $E^{**}$ has been claimed, we have
	$a + (a+1)b \geq \left| E^{**} \right| - m = 
	r^2 \cdot \left( k/2 - 1 \right)^2 - m \geq r^2k^2/16 - m,$
	implying that
	$a \geq (r^2k^2/16-m-b)/(b+1)$. By our choice of $r$ we get that
	\begin{equation*}
	\left| \mathcal{F} \right| \geq \frac{a+m}{( 3k/2 )^2} \geq 
	\frac{r^2k^2/16-m-b}{( 3k/2 )^2(b+1)} + \frac{m}{( 3k/2 )^2} \geq 
	\frac{r^2k^2/16-b}{( 3k/2 )^2(b+1)} =
	\frac{4k^2(b+1)^2-b}{( 3k/2 )^2(b+1)} \ge b+1,
	\end{equation*} 
	as required. This completes the proof of the theorem. 
\end{proof}
\noindent

\section{Games on blow-ups of a multigraph}\label{sec:blowup}

In this section, we present the proof of Theorem \ref{upper_strategy}. 
For this purpose we consider some auxiliary games, played not on the complete graph but on blow-ups of $H$. 
In the auxiliary games, in some rounds the players can select more or less edges than in other rounds. 
Nonetheless, with a slight abuse of terminology, 
we still call them Avoider--Enforcer games.
 
\subsection{The general setup}\label{sec:setup}

For a graph $H$ on the vertex set $[h]$
 we define a graph $\B_H(V_1, V_2,\ldots, V_h)$, called a \emph{blow-up of $H$}, as follows. 
We replace every vertex $i$ of $H$ with an independent set $V_i$ and every edge $\{i,j\}$ of $H$
 with the corresponding complete bipartite graph, that is, with the set of edges 
 $\{xy : x \in V_i, y \in V_j\}$. 

We will also consider blow-ups of connected multigraphs $H$ containing exactly one cycle, which is either a loop $C_1$, 
or a $C_2$ (the multigraph with two vertices joined by two edges). 
With a little abuse of terminology, we extend the definition of a unicyclic graph to include such multigraphs.
We define the blow-up $\B_{H}(V_1, V_2, \ldots, V_h)$ of such a multigraph similarly to the blow-up of a simple graph. Note that if $C_1\subs H$, say if $i \in V(H)$ has a loop, then all vertices in $V_i$ have loops. Similarly, if $C_2\subs H$, say if $i,j \in V(H)$ are joined by two edges, then 
$E(V_i,V_j)$ is a multiset such that every pair $x\in V_i$, $y\in V_j$ is joined by two edges. 
In this case, a couple of edges $\{e,f\}$ in $E(V_i,V_j)$ with the same ends will be called \emph{friends}. 
In an Avoider--Enforcer $H$-game on $\B_{H}(V_1, V_2, \ldots, V_h)$ the players select edges from the multiset
$E(\B_{H}(V_1, V_2,\ldots,V_h))$ and Avoider tries to avoid a copy of $H$ in her multigraph.

Let $H'$ be a submultigraph of a multigraph $H$ and let $V(H')=\{i_1, i_2,\ldots, i_r\}$. 
A copy of $H'$ in ${\mathbb B}_H(V_1, V_2,\ldots, V_h)$ is said to be \emph{canonical} if {the vertex playing the role of $i_j$ in this copy is from $V_{i_j}$} 
(for every $1 \leq j \leq r$).  In the Avoider--Enforcer $H$-game played on 
${\mathbb B}_H(V_1,\ldots,V_h)$,
we call an edge $e \in E$ a \emph{canonical $H$-threat} or simply a \emph{canonical threat} 
if $e$ is free and there exists a canonical $H$-copy in $G_A \cup \{e\}$ that is not contained in $G_A$.

Throughout Section \ref{sec:blowup} we assume that 
{$h\ge 2$, $H$ is either a tree with the vertex set $V(H)=[h]$, or it is a unicyclic multigraph with the vertex set 
$V(H)=[h]$ and (unique) cycle $C_k$ ($k\ge 1$) on vertices $1,\dots,k$.}
Let $b$ and $r$ be two integers such that $0\le r\le b$. 
We will only consider the $(1:b)$ Avoider--Enforcer game played on 
${\mathbb B}_{H}(V_1,\ldots,V_h)$, in which 
\begin{equation}\label{bigprod_match}
4^{h} \le |V_i| \le b/(8h) \quad \text{for every $i\in [h]$, and} \ 
\prod_{i=1}^h|V_i|>
\begin{cases}
(16^h (b+1)h)^{h-1}, &\text{ if $H$ has a cycle,}\\
{(16^h (b+1)h)^{h-2}\max_j |V_j|},&{\text{ if $H$ is a tree.}}
\end{cases}
\end{equation}
Additionally, assume that in the first round (and only then)
Avoider can select any number of edges
of ${\mathbb B}_{H}(V_1,\ldots,V_h)$ she wishes and Enforcer has to select exactly $r$ edges.


\subsection{Proof of Theorem~\ref{upper_strategy}}

In this section we give the proof of Theorem~\ref{upper_strategy}, modulo Proposition \ref{blowup_unic}, stated below, which says that Enforcer can force many $H$-threats in the game played on the blow-up of $H$. The proof of Proposition \ref{blowup_unic} appears in the next section.

\begin{prop}\label{blowup_unic}
{Assume the setting as introduced in Section~\ref{sec:setup}}. Then Enforcer has a strategy in which, at some point, 
	either he has already won or the number of canonical $H$-threats is at least
	$$
	\frac{\prod_{i=1}^h |V_i|}{(8^{h}(b+1)h)^{{e(H)}-1}}\,.
	$$
\end{prop}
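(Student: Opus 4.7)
The plan is to proceed by induction on $h = v(H)$. The base case is the smallest admissible $H$: for unicyclic multigraphs this is $H = C_2$ on two vertices with two parallel edges, and for trees it is $H = P_3$. In the $C_2$ case Enforcer's strategy is to keep at least one friend edge free in each friend pair, so that whenever Avoider claims an edge its still-free friend is automatically a canonical threat; using $|V_i| \geq 4^h$ and $|V_i| \leq b/(8h)$ a direct count yields the required bound. The $P_3$ case is handled by an analogous argument focused on the degrees inside $V_2$ of Avoider's graph.

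For the inductive step with $h \geq 3$, I would pick a leaf $v$ of $H$ with unique neighbor $u$. (The exception $H = C_k$, a pure cycle, I would handle separately by deleting a cycle vertex to obtain the path $P_{k-1}$ and invoking the tree case.) Set $H' = H - v$; this is a tree or unicyclic multigraph on $h - 1$ vertices with $e(H) - 1$ edges, lying in the same class as $H$. Since $|V_h| \leq b/(8h) < b+1$, one has $\prod_{i<h}|V_i| \geq \prod_{i \leq h}|V_i| / (b+1)$, and the product hypothesis in~\eqref{bigprod_match} for $H'$ on the board $\B_{H'}(V_1, \ldots, V_{h-1})$ follows from the hypothesis for $H$ after an elementary manipulation.

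Enforcer's strategy has two phases. In Phase~1 he applies the inductive strategy inside $\B_{H'}(V_1, \ldots, V_{h-1})$, disregarding edges incident to $V_h$: whenever Avoider picks an edge touching $V_h$, Enforcer absorbs it as an Avoider free move of the sub-game, which the first-round flexibility of the setup permits. Phase~1 ends when either Enforcer has already won, or there are at least $T' := \prod_{i<h}|V_i| / (8^{h-1}(b+1)(h-1))^{e(H')-1}$ canonical $H'$-threats, each corresponding to a free edge that would close a canonical $H'$-copy $C$ in $G_A$ and hence exposes a distinguished vertex $\phi_C(u) \in V_u$. In Phase~2, Enforcer directs his remaining moves outside $E(V_u, V_h)$ so as to force Avoider to claim many edges in $E(V_u, V_h)$; each such new Avoider edge touching some $\phi_C(u)$ either closes a canonical $H$-copy in $G_A$ (so Enforcer has already won) or creates a canonical $H$-threat on the free $H'$-extending edge certified by the corresponding $H'$-threat.

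The main obstacle is Phase~2: translating the $H'$-threats from Phase~1 into $H$-threats while controlling two sources of loss, namely (i)~Enforcer's own moves possibly claiming edges of $E(V_u, V_h)$ needed for extension, and (ii)~the simultaneous consumption of $H'$-threats when Avoider closes $H'$-copies. The target $(8^h(b+1)h)^{e(H)-1}$ is calibrated so that each inductive step is allowed to lose at most a multiplicative factor of $8h(b+1)$; checking that the surviving threat count meets this bound requires a careful comparison between the Phase~1 supply of extensions and the number of edges Enforcer spends outside $\B_{H'}$, where the size bound $|V_h| \leq b/(8h)$ is used essentially.
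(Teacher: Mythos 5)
Your high-level plan (induction on $h = v(H)$) coincides with the paper's, but your inductive step is genuinely different — and, as written, incomplete. The paper does not delete a leaf. Instead it has Enforcer force a large family $\F$ of pairwise vertex-disjoint \emph{canonical copies} of some subtree $F_s \subseteq H$ in Avoider's graph, along with strong structural conditions (Enforcer has no edges inside those copies or touching the remaining reservoir sets), and then \emph{contracts} each copy to a single vertex: the game becomes an $H'$-game on $\B_{H'}(V'_s,\ldots,V'_h)$ where $H' = H/F_s$ is again a tree or a unicyclic multigraph on fewer vertices (possibly acquiring a loop or a $C_2$, which is exactly why the setup works with multigraphs). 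The decisive advantage of contraction over leaf deletion is that an $H'$-threat in the contracted board is \emph{automatically} an $H$-threat in the original board, because the contracted subtrees are already present as Avoider's edges. No translation step is needed.

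Your Phase~2 is where the proposal breaks. An $H'$-threat after Phase~1 is a free edge $e$ in $\B_{H'}$; to upgrade it to an $H$-threat you need Avoider to later claim a specific edge from $\phi_C(u)$ to $V_h$, while $e$ remains free. Nothing in your strategy forces this. When the edges outside $E(V_u,V_h)$ run out, Avoider chooses which edges of $E(V_u,V_h)$ to take, and she can freely play from the many $V_u$-vertices that are \emph{not} of the form $\phi_C(u)$ — the number of such threat vertices is at most $T'$, which is typically far smaller than $|V_u|$. Enforcer could try to pre-emptively occupy the irrelevant edges of $E(V_u,V_h)$, but then he must simultaneously avoid consuming the threat edges $e \subseteq \B_{H'}$ and the extension edges $E(\{\phi_C(u)\},V_h)$; you give no accounting that these two constraints are compatible, and your concluding paragraph essentially concedes that this is still to be done. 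Two further obstacles: (1) the setup's first-round flexibility only holds for the first round, so your "absorb Avoider's out-of-board moves as free moves of the sub-game" is not covered by the inductive hypothesis — later-round skipped Avoider moves are not an admissible game state, and making Enforcer respond locally in $E(V_u,V_h)$ instead would require a separate supply argument you do not give; and (2) for $H = C_k$, deleting a vertex gives $P_{k-1}$, a drop of \emph{two} edges, so the single extension step you envision cannot close the gap — this is precisely the case the paper handles cleanly via contraction (a $C_k$ contracts to a $C_{k-1}$, $C_2$, or a loop while keeping the edge deficit at one per step).

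In short: your leaf-deletion route is a different and attractive idea, but the crux — converting $H'$-threats into $H$-threats while Enforcer both blocks irrelevant edges and preserves the relevant ones — is the hard part of the proof, and it is the part that is missing.
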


Now, we deduce Theorem~\ref{upper_strategy} from the above result.

\begin{proof}[\textbf{Proof of Theorem~\ref{upper_strategy}}]
	{Let $V(K_n)=V_1\sqcup V_2 \sqcup \ldots \sqcup V_h$} be a partition of the vertex set of $K_n$
	such that $\lf n/h\rf\le |V_i|\le \lc n/h\rc$ for every $i\in [h]$.
	Consider the blow-up ${\mathbb B}_{H}(V_1,V_2,\ldots,V_h)\subs K_n$. 
	Set $\gamma=\frac{1}{16^hh (2h)^{e(H)/(e(H)-1)}}$, and suppose that $b$ is an even number with 
	\begin{equation}\label{bbound}
	8h n\le b+1\le \gamma n^{{e(H)/(e(H)-1)}}\,.
	\end{equation}
	We split Enforcer's strategy into two stages.
	
	\paragraph{Stage I}
	Enforcer picks all free edges that are not edges of the blow-up 
	${\mathbb B}_{H}(V_1,V_2,\ldots,V_h)$, until no such free edges remain.
	This stage might end in the middle of Enforcer's move. 
	
	\paragraph{Stage II}
	In this stage the game is played on ${\mathbb B}_{H}(V_1,V_2,\ldots,V_h)$. 
	At the beginning some edges are already selected by Avoider but there is no edge of Enforcer on the board.
	Let $r\in \{0,1,\ldots,b\}$ be the number of edges Enforcer has to select in order to complete his last move of Stage I.
	Enforcer then uses the strategy from Proposition~\ref{blowup_unic}. 
	Let us verify that this is possible.
	Observe that \eqref{bbound} implies
	$4^{h} \le |V_i| \le \frac{b}{8h}$ for every $1 \le i \le h$.
	Furthermore, by our choice of $\gamma$ and \eqref{bbound}, {in case of $H$ with a cycle} we have
	$$
	\prod_{i=1}^h |V_i|>\Big(\frac{n}{2h}\Big)^h \ge (16^{h} (b+1)h)^{h-1}\,,
	$$
	{and in case $H$ is a tree we have
		$$
		\prod_{i=1}^h |V_i|>\Big(\frac{n}{2h}\Big)^{h-1}\max_j |V_j| \ge (16^{h} (b+1)h)^{h-2}\max_j |V_j|\,.
		$$}
	Therefore we can indeed apply Proposition~\ref{blowup_unic}. As a consequence, Enforcer has a strategy in which at some point, either he has already won or the number of threats is not less than
\begin{equation*}
	\frac{\prod_{i=1}^h |V_i|}{(8^{h}(b+1)h)^{{ e(H)}-1}}>\frac{\gamma^{e(H)-1} n^h}{(b+1)^{{e(H)}-1}}. \qedhere
\end{equation*}
\end{proof}

\subsection{Main lemmata and the proof of Proposition \ref{blowup_unic}}
\label{sec:prop-threats}

The idea behind Enforcer's strategy is quite simple: he tries to force Avoider to create many trees and then to extend them gradually, till a copy of $H$ is completed. He can achieve this by following an inductive strategy. The base case is captured by the following statement, whose proof is given in Section \ref{sec:base-case}.

\begin{lemma}[{\bf Base case}]\label{blowup_triangleM}
{Assume the setting as introduced in Section~\ref{sec:setup}}. Suppose further that $b$ is even and $H=C_2$. Then Enforcer has a strategy in which, at some point, either he has already won or the number of $C_2$-threats is not less than
$$\frac{|V_1||V_2|}{4(b+1)}.$$
\end{lemma}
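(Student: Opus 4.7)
The plan is to exploit the friend structure of $\B_{C_2}(V_1,V_2)$. Set $N := |V_1||V_2|$, so the board is a multiset consisting of $N$ unordered pairs of friend edges, and a $C_2$-threat is exactly a free edge whose friend has been claimed by Avoider. At any point in the game, classify each friend pair by the status of its two edges, giving states $\{F,F\},\{F,A\},\{F,E\},\{A,E\},\{E,E\}$ and the absorbing state $\{A,A\}$ in which Avoider has completed a $C_2$; the threats are precisely the pairs in state $\{F,A\}$.

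Enforcer will play a \emph{pair strategy}. In each of his moves where he must pick $m$ edges ($m=r$ in round~1, $m=b$ in subsequent rounds), he selects $\lfloor m/2\rfloor$ pairs currently in state $\{F,F\}$ and takes both of their edges, plus, if $m$ is odd, a single edge from one further $\{F,F\}$ pair. Since $b$ is even, an odd $m$ can only occur in round~1 when $r$ is odd. Consequently at most one pair ever reaches state $\{F,E\}$, and the state $\{A,E\}$ can only be entered when Avoider later defuses that one pair. Thus, at every stage where Avoider owns $a$ edges, at least $a-1$ of them lie in $\{F,A\}$ pairs and hence correspond to distinct threats; moreover, if Avoider ever claims the free edge of a $\{F,A\}$ pair, she completes a $C_2$ and Enforcer has already won.

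It remains to run the strategy long enough. Let $a_0$ be the number of edges Avoider picks in round~1. If $a_0\ge N/(4(b+1))$, we are done after round~1 (either Avoider already lost, or her $a_0$ pairs are threats). Otherwise, put $T^{*} := \lceil N/(4(b+1))\rceil + 2$; using the bound $N>512(b+1)$ supplied by~\eqref{bigprod_match} with $h=2$ (which in particular forces $b<N/512$), a short calculation gives
\[
p_{FF}\;\ge\; N-a_0-\lceil r/2\rceil-T^{*}(b/2+1)\;>\;N/4\;>\;b/2
\]
throughout the first $T^{*}$ rounds beyond round~1, so Enforcer can maintain his strategy for all of them. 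By the invariant above, the number of threats at that point is at least $a_0+T^{*}-1\ge N/(4(b+1))$, as required. The main obstacle is the bookkeeping of the first round: Avoider's unbounded $a_0$ is dealt with by the case split, while the possibly odd $r$ gives one unavoidable $\{F,E\}$ pair, whose eventual defusing by Avoider accounts for the $-1$ loss in the threat lower bound $a-1$.
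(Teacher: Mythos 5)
Your proof is correct and uses essentially the same approach as the paper's: Enforcer repeatedly claims full friend-pairs so that every subsequent Avoider move (save for the one pair left half-claimed when $r$ is odd) forces a new persistent $C_2$-threat, and the free-pair supply is shown to last long enough. The only cosmetic difference is that you split on $a_0$ (the size of Avoider's first move) while the paper splits on the set $X$ of $V_1$-vertices isolated after that move; both serve the same bookkeeping purpose.
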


The following statement, to be proven in Section \ref{sec:inductive-step}, allows Enforcer to reduce a game played on a blow-up of $H$ to a game on a blow-up of a proper sub(multi)graph of $H$. 

\begin{lemma}[{\bf Inductive step}]\label{blowup_matching}
{Assume the setting as introduced in Section~\ref{sec:setup}}. Suppose further that $h\ge 3$. Then Enforcer has a strategy which guarantees that, at some point, there exist sets $V'_i\subs V_i$ for $1 \le i  \le h$, a subgraph $F_s\subs H$ on $s$ vertices,   
and a family $\F$ of vertex-disjoint canonical copies of $F_s$ in Avoider's graph, 
such that the following conditions are satisfied.
\begin{enumerate}[{\rm (P1)}]
\item\label{vprime}
$V'_i=V_i\cap \bigcup_{F\in\F} V(F)$ 
for every $i\in V(F_s)$.
\item\label{bigsets}
$|V'_i|\ge |V_i|/4$ for every $i\notin V(F_s)$.
\item\label{no_enf}
Enforcer has claimed no edges which are incident to vertices in 
$\bigcup_{i\in V(H)\setminus V(F_s)}V'_i$.
\item\label{no_enf_threat}
For every  $F\in\F$ there is no Enforcer's edge with both endpoints in $V(F)$. 
\item\label{manypaths}
Either $F_s$ is a loop and $|\F|\ge \max\limits_{ij \in E(H), i \ne j}\frac{|V_i||V_j|}{64(b+1)h}$, or $F_s$ is 
a non-trivial tree  and ${|\F|\ge \frac{\prod_{i\in V(F_s)} |V_i|}{(64(b+1)h)^{s-1}}}$.
\end{enumerate}
\end{lemma}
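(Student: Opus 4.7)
\textbf{Proof plan for Lemma~\ref{blowup_matching}.}
I would prove the lemma by induction on $h = v(H)$, with the base case $h = 3$ handled directly using Lemma~\ref{blowup_triangleM} as the key building block. Concretely, in the base case one picks a convenient edge $\{i,j\} \in E(H)$ and runs an auxiliary $C_2$-game on the pair $V_i, V_j$, interpreting it as the blow-up $\mathbb{B}_{C_2}(V_i, V_j)$. By Lemma~\ref{blowup_triangleM}, Enforcer produces many canonical $C_2$-threats, each the friend of an Avoider-claimed edge between $V_i$ and $V_j$; extracting from these Avoider edges a large matching yields the desired family $\mathcal{F}$ with $F_s = \{i,j\}$, while the third part $V_k$ (with $k \notin \{i,j\}$) is left entirely untouched, so that (P2) and (P3) are trivially satisfied for $k$.

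For the inductive step $h \ge 4$, choose a vertex $v \in V(H)$ so that $H' := H - v$ is still a tree or a unicyclic multigraph (a leaf of $H$ if one exists, or otherwise any vertex on the cycle when $H$ itself is a cycle, since then $H - v$ is a path). Enforcer simulates the game on the sub-blow-up $\mathbb{B}_{H'}(V_i : i \ne v)$, following the strategy supplied by the induction hypothesis and \emph{never touching any edge incident to} $V_v$. When Avoider plays in the sub-blow-up, Enforcer answers according to the inductive strategy; when Avoider plays an edge incident to $V_v$, Enforcer places all $b$ of his edges inside the sub-blow-up as ``bonus'' moves, which can only help him in the sub-game. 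The induction hypothesis then yields sets $V''_i \subseteq V_i$ ($i \ne v$), a subgraph $F_s \subseteq H' \subseteq H$, and a family $\mathcal{F}$ of canonical $F_s$-copies satisfying (P1), (P4), and (P5), the last with $h - 1$ rather than $h$ in the denominator $(64(b+1)h)^{s-1}$ (a bound stronger than what is needed). Taking $V'_v := V_v$ in the conclusion then makes (P2) and (P3) for the index $v$ automatic, since Enforcer touched no edge incident to $V_v$, and (P1) for $i \in V(F_s)$ is inherited directly from the sub-game.

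The main obstacle I anticipate is making the ``bonus move'' framing rigorous: the statement of the lemma only grants Enforcer the flexibility of a partial first round of $r \le b$ edges, whereas the reduction above hands him extra moves scattered throughout the play (once for each time Avoider picks an edge incident to $V_v$). The cleanest way around this is to first strengthen the inductive claim to permit Enforcer an arbitrary number of bonus edges during the game, subject only to preserving the $(1:b)$ rhythm on Avoider-initiated rounds; this strengthening costs nothing, because extra Enforcer edges only expand his strategic options. A secondary verification is that the sub-blow-up $\mathbb{B}_{H'}(V_i : i \ne v)$ still satisfies the size hypothesis~(\ref{bigprod_match}) with $h$ replaced by $h - 1$, which follows from the generous slack in (\ref{bigprod_match}) together with the upper bound $|V_v| \le b/(8h)$, so that removing one factor from $\prod_i |V_i|$ leaves a product comfortably above the threshold required by the induction hypothesis.
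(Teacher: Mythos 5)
Your proposal is a genuinely different approach from the paper, and unfortunately it has serious gaps in both the base case and the inductive step.

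The paper does \emph{not} argue by induction on $h$. It gives a direct strategy: Enforcer maintains a ``green graph'' consisting of Avoider's post-first-round edges and keeps it a disjoint union of canonical subtrees of $H$ (or canonical paths/cycles inside the cycle blow-up) by, in each round, isolating Avoider's fresh edge from other green components, dumping surplus edges into a pre-designated ``dustbin'' $E(W_\ell,W_t)$, and adding special moves to prevent $G_k$-paths from exceeding $k$ vertices. A stopping condition (some $E(V_i,V_j)$ reaches $4m_{i,j}$ green edges, or, in the loop case, many Avoider loops accumulate) guarantees the desired family $\mathcal F$, with (P1)--(P5) read off from structural claims about the green graph. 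Three separate cases are needed: $H$ is a tree or has a loop, $H$ has a $C_2$, and $H$ has a $C_k$ with $k\ge 3$.

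Two concrete problems with your proposal. First, the base case: for a generic edge $\{i,j\}\in E(H)$ (a tree edge, or an edge of a $C_k$ with $k\geq 3$), the bipartite graph $E(V_i,V_j)$ in $\mathbb B_H$ is \emph{simple} --- there are no parallel ``friend'' edges --- so the $C_2$-game of Lemma~\ref{blowup_triangleM} cannot be played on it. Moreover, even when a $C_2$ is present, the threats produced by Lemma~\ref{blowup_triangleM} correspond to Avoider edges that need not form a matching; the paper's proof has to actively enforce the matching structure (see Claim~\ref{claimGbis-case1}(i) and its analogues) precisely because the raw count of threats does not hand it to you.

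Second, the ``bonus move'' framing in the inductive step does not survive the following stalling attack. Both $|E(\mathbb B_{H'})|$ and the number of edges incident to $V_v$ are of order $b^2/h$ (each factor $|V_i|$ is at most $b/(8h)$). If Avoider plays only edges incident to $V_v$, then under your strategy Enforcer pours $b$ edges per round into $\mathbb B_{H'}$, which is exhausted after roughly $b/(64h)$ rounds --- while Avoider has claimed only $\Theta(b/h)$ of the $\Theta(b^2/h)$ edges incident to $V_v$, so she can keep stalling. At that point Avoider's subgraph inside $\mathbb B_{H'}$ is still empty, so the induction hypothesis yields no family $\mathcal F$, and Enforcer's next legal move must touch $V_v$-incident edges, breaking (P3). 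Strengthening the inductive hypothesis to ``allow arbitrary bonus edges'' cannot fix this, since the obstacle is depletion of the sub-board, not a flaw in the inductive statement. The paper sidesteps this entirely: its Enforcer never needs to avoid a whole part $V_v$, because the stopping condition is defined in terms of accumulated green structure, not in terms of completing a sub-game on a smaller blow-up.
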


\begin{proof}[\textbf{Proof of Proposition~\ref{blowup_unic}}]
We will use induction on $h$. 
{If $H$ is a tree and $h=2$, then the assertion easily follows.
If $H$ has a cycle and $h=k=2$}, 
the assertion follows immediately from Lemma~\ref{blowup_triangleM}.
We next consider the case {of a unicyclic $H$ with} $(h,k)=(2,1)$. Let $\F$  be the family given by Lemma~\ref{blowup_matching}. According to property (P\ref{manypaths}), 
$\F$ is a matching or a set of loops of size at least $|V_1||V_2|/(64(b+1)h)$. 
From properties (P\ref{bigsets})--(P\ref{no_enf_threat}), it follows that
Avoider's graph contains either a copy of $H$, or at least $|V_1||V_2|/(64(b+1)h)=|V_1||V_2|/(8^h(b+1)h)$ 
canonical $H$-threats.

Let $h\ge 3$ and suppose that the assertion is true for {every tree and} every unicyclic multigraph on less than $h$ vertices.
We will show that, roughly speaking, Enforcer can force many vertex-disjoint canonical trees in Avoider's graph.
After contracting some edges of the trees he then can proceed inductively. 
Enforcer's strategy is divided into three stages. In the first he forces many trees,
in the second he selects edges irrelevant for later play, and in the third he applies the inductive argument.

\paragraph{Stage I}
Due to Lemma~\ref{blowup_matching},  
Enforcer has a strategy so that at some point there exist vertex-sets 
$V'_i\subs V_i$ for $1 \le i \le h$, 
and a family $\F$ of vertex-disjoint canonical copies of some $F_s\subs H$ in Avoider's graph, which satisfy properties (P\ref{vprime})--(P\ref{manypaths}). 
Suppose first that $F_s$ is a tree on $h$ vertices. {If $F_s = H$ then Enforcer had already won (as there is a copy of $F_s = H$ in Avoider's graph), so we assume from now on that $F_s \neq H$, namely that $H$ is unicyclic. Then $e(H) = h = s$}. In view of (P\ref{manypaths}), we have
$$|\F|\ge \frac{\prod_{i=1}^h |V_i|}{(64 (b+1)h)^{{ e(H)}-1}}>\frac{\prod_{i=1}^h |V_i|}{(8^h(b+1)h)^{{ e(H)}-1}}\,.$$
If there is no copy of $H$ in Avoider's graph yet, then every tree in $\F$ is a canonical threat, by (P\ref{no_enf_threat}).
Thus the proposition follows, and we do not have to proceed to Stage II.

From now on we assume that $F_s$ is not a tree on $h$ vertices, that is, $V(F_s)\subsetneq V(H)$. 
The game proceeds to the next stage.

\paragraph{Stage II}
If $F_s$ is a tree, we say that an edge $e\in E(V'_i,V'_j)$ is {\em important} 
if $\{i,j\}\cap (V(H)\setminus V(F_s)) \ne \emptyset$, or
$ij\in E(C_k)\setm E(F_s)$ and the ends of $e$ belong to the same tree in $\F$ ($e$ may be a loop).
 
If $F_s$ is a loop, we define important edges differently.  
Suppose that $x\in V(H)$ is the only vertex of $H$ having a loop, 
and fix an arbitrary $y \in V(H) \setminus \{x\}$ such that $xy\in E(H)$ (such a $y$ exists because $H$ is connected). 
Let $\cM$ be a matching in $E(V'_x,V'_y)$ of size at least $|V_x||V_y|/(256(b+1)h)$. 
Such a matching exists because 
$|V'_x|=|\F|\ge |V_x||V_y|/(64(b+1)h)$ by properties (P\ref{vprime}) and (P\ref{manypaths}),
and $|V'_y|\ge |V_y|/4> |V_x||V_y|/(64(b+1)h)$ due to (P\ref{bigsets}) and \eqref{bigprod_match}. 
Let $V''_x=V'_x\cap V(\cM)$,  $V''_y=V'_y\cap V(\cM)$, and $V''_i=V'_i$ for every $i\in V(H) \setm \{x,y\}$.   
We call all edges in $\cM\cup\bigcup_{ij\in E(H)\setm \{xy\}}E(V''_i,V''_j)$ {\em important}. 

In Stage II Enforcer selects all free edges that are {\it not} important.  
The stage might end in the middle of Enforcer's move. 

\paragraph{Stage III}
At the beginning of this stage, the set of important edges contains every free edge, and
in view of (P\ref{no_enf}), (P\ref{no_enf_threat}) and Enforcer's strategy in the second stage, Enforcer has claimed no edges in this set. 
We are going to reduce the target graph $H$ and for this purpose we consider two cases.

\begin{description}
\item[\bf Case 1] $F_s$ is a tree.

Without loss of generality we can assume that $V(F_s)=[s]$
(here we make an exception to the convention that $1,\dots,k$ are the vertices of the {unique} cycle in $H$, {if there is one}).
We contract every tree in $\F$ to a vertex, 
and consider a blow-up $\B_{H'}(V'_s,V'_{s+1},\ldots,V'_h)$ of the graph $H'$ obtained from $H$ by contracting $F_s$ to the vertex $s$. 
Note that if $F_s$ contains an edge $e$
of a $C_2$, then the friend of $e$ becomes a loop in $H'$.  
{Observe that $H'$ is unicyclic if $H$ is unicyclic, and $H'$ is a tree if $H$ is a tree.}  
Every edge of $\B_{H'}(V'_s,V'_{s+1},\ldots,V'_h)$ is either selected by Avoider
or it was an important edge in $\B_{H}(V'_s,V'_{s+1},\ldots,V'_h)$ and thus it has not been selected by Enforcer.

To prove the proposition, it suffices to show that Enforcer has a strategy
in the $(1:b)$ $H'$-game played on $\B_{H'}(V'_s,V'_{s+1},\ldots,V'_h)$, 
which forces either a copy of $H'$ in Avoider's graph or at least 
$$
\frac{\prod_{i=1}^h |V_i|}{(8^{h}(b+1)h)^{{e(H)}-1}}
$$
canonical $H'$-threats.

\item[\bf Case 2] $F_s$ is a loop.

We can assume that the vertices $x$ and $y$ of $H$ considered in 
Stage II are equal to $1$ and $2$, respectively. Recall that the edge-set $\cM$ defined in Stage II 
is a perfect matching in $E(V''_1,V''_2)$.
We glue together the ends of every edge $e$ in $\cM$ (turning $e$ into a loop), delete all the loops in $\F$,
and consider a blow-up $\B_{H'}(V''_2,V''_3,\ldots,V''_h)$ of $H'$, which is the unicyclic graph obtained from $H$ by contracting the edge $\{1,2\}$ to a vertex. 
As in Case 1, to prove the proposition, it is enough to show that Enforcer has a strategy in the $(1:b)$ $H'$-game played on $\B_{H'}(V''_2,V''_3,\ldots,V''_h)$, 
which guarantees either a copy of $H'$ in Avoider's graph or at least 
$\frac{\prod_{i=1}^h |V_i|}{(8^{h}(b+1)h)^{{e(H)}-1}}$ canonical $H'$-threats.
\end{description}

We will prove that the induction hypothesis can be applied to $H'$ in both cases.  
We only consider Case 1, as the argument for Case 2 is the same as for the special case of Case 1 in which 
$H$ has a cycle, $s=k=2$ and the $C_2$ in $H$ coincides with the edge being contracted. 

So our goal now is to show that \eqref{bigprod_match} holds for $H'$ and the sets $V'_s,\dots,V'_h$. 
Since \eqref{bigprod_match} holds for $H$, we have $|V'_i| \le |V_i| \le
b/(8v(H)) \le b/(8v(H'))$ for every $i \in \{s,\ldots,h\}$. 
From (P\ref{bigsets}) and \eqref{bigprod_match}, we know that $|V'_i|\ge |V_i|/4\ge 4^{h-1} \geq {4^{v(H')}}$ for $i\in \{s+1,\ldots,h\}$. By (P\ref{manypaths}), \eqref{bigprod_match} and $h \geq 3$, we have
$$
|V'_s|= |\F|\ge \frac{\prod_{i=1}^s |V_i|}{(64(b+1)h)^{s-1}}>
{\frac{\prod_{i=1}^h |V_i|}{(64h)^{s-1}(b+1)^{h-2}|V_h|} \ge 
\frac{(16^{h}(b+1)h)^{h-2}\max_j{|V_j|}}{(64h)^{s-1}(b+1)^{h-2}|V_h|} } >
4^{h-1} \ge 4^{v(H')}.
$$

Finally, as $|V'_i|\ge |V_i|/4$ for every $s+1 \le i\le h$, and as 
$|V'_s|=|\F|\ge \frac{\prod_{i=1}^s |V_i|}{(64 (b+1)h)^{s-1}}$, $v(H') = h - s + 1$ and $h \geq 3$, in the case that $H$ is unicyclic it holds that  
\begin{eqnarray*}
\prod_{i=s}^{h} |V'_i|&\ge & 
\frac{\prod_{i=1}^h |V_i|}{(64 (b+1)h)^{s-1}4^{h-s}} \overset{\eqref{bigprod_match}}{>}
\frac{(16^h(b+1)h)^{h-1}}{(64(b+1)h)^{s-1}4^{h-s}} >
(16^{v(H')}(b+1)v(H'))^{v(H')-1},
\end{eqnarray*}
and in the case that $H$ is a tree it holds that
\begin{eqnarray*}
\prod_{i=s}^{h} |V'_i|&\ge & 
\frac{\prod_{i=1}^h |V_i|}{(64 (b+1)h)^{s-1}4^{h-s}} \overset{\eqref{bigprod_match}}{>}
\frac{(16^h(b+1)h)^{h-2}\max_j|V_j|}{(64(b+1)h)^{s-1}4^{h-s}} >
(16^{v(H')}(b+1)v(H'))^{v(H')-2}\max_j|V_j|.
\end{eqnarray*}

{Thus, we have proven that the induction hypothesis can be applied to $H'$.} By the induction hypothesis Enforcer has a strategy such that at some point, 
either there is a canonical copy of $H'$ in Avoider's graph, or the number of canonical $H'$-threats is at least    
\begin{align*}
\frac{\prod_{i=s}^h |V'_i|}{(8^{h-s+1}(h-s+1)(b+1))^{{e(H')-1}}}
&\ge
\frac{\prod_{i=1}^h |V_i|}{(8^{h-s+1}(h-s+1)(b+1))^{{e(H')-1}}\cdot (64(b+1)h)^{s-1}4^{h-s}}\\
&>
\frac{\prod_{i=1}^h |V_i|}{(8^{h} h(b+1))^{{e(H)}-1}}.\qedhere
\end{align*}
\end{proof}

\subsection{The base case}\label{sec:base-case}
Here we provide a self-contained proof for the base case.

\begin{proof}[\textbf{Proof of Lemma~\ref{blowup_triangleM}}] 
	Suppose that after the first move of Avoider there is no copy of $C_2$ in her multigraph.
	Let $X$ be the set of all vertices of $V_1$ isolated in Avoider's multigraph after her first move. If $|X|\le |V_1|/2$, then the number of threats is at least
	$$ |V_1\setminus X| \ge |V_1|/2 \overset{\eqref{bigprod_match}}{>}|V_1||V_2|/(2b),$$
	and the assertion follows.
	From now on we assume that $|X|>|V_1|/2$. Let  
	$$T=\frac{|V_1||V_2|}{4(b+1)}\,.$$
	From the definition of $X$ we see that
	every edge of the multiset $E(X,V_2)$ is free. Furthermore, we have
	$
	{|E(X,V_2)|=2|X||V_2| > |V_1| \cdot |V_2|
		=4(b+1)T}
	$. Thus the number of rounds in the game is greater than
	$4T$. Note that $T>1$, by \eqref{bigprod_match}.
	Furthermore there are $|E(X,V_2)|/2>2(b+1)T$  
	free couples of friends.
	
	Enforcer plays as follows. In the first round he begins by picking $\lf r/2\rf$ free couples of friends. 
	After that he has either zero edges 
	or one edge left to take, depending on the parity of $r$.  
	If $r$ is odd, Enforcer finishes the move by picking an arbitrary edge in $E(X,V_2)$, say $e$. 
	Denote the friend of $e$ by $f$ 
	and note that $f$ 
	is the only free edge in $E(X,Y)$ whose friend has been
	selected by Enforcer.
	Any other free edge is either a threat or has a free friend. 
	If $r$ is even, then every  free edge is either a threat or has a free friend.
	Avoider in her response must create a new threat or, if $r$ is odd, select $f$ 
	(we assume that Avoider never selects a threat, as this would lose the game).
	
	In the next $2\lf T\rf$ rounds, Enforcer always picks $b/2$  free couples of friends. 
	Recall that $b$ is even and, as mentioned above, there are enough free couples to play in this way.
	In every round Avoider will create a new threat, with only one possible exception, that is, when she selects $f$.
	We conclude that the number of threats created in the game is at least 
	$2\lf T\rf>T$.
\end{proof}

\subsection{The inductive step}\label{sec:inductive-step}

In this section we provide the (long) proof of Lemma~\ref{blowup_matching}. To simplify the presentation, we first introduce some notation.

\begin{defn} We colour an edge {\it green} if and only if it is not a loop and it was claimed by Avoider
in her second or later round. 
At any moment of the game we call a subgraph of Avoider's graph green if it is non-empty and all of its edges are green. 
Throughout the game we denote by $G$ the {\em green graph}, that is, the graph induced by the set of green edges. 
Note that $G$ has no isolated vertices and no loops. A connected component of $G$ is called a {\em green component}. 
{If $H$ contains a cycle,} 
we denote by $G_k$ the subgraph of $G$ induced by the set of all green edges with both endpoints in $\bigcup_{i=1}^k V_i$.
In other words, $G_k$ is induced by the set of green edges in the blow-up of the cycle $C_k$.
\end{defn}

\begin{proof}[\textbf{Proof of Lemma~\ref{blowup_matching}}]
	
For each edge $ij \in E(H)$ with $i\ne j$, let 
$$
m_{i,j}=\Big\lc \frac{|V_i||V_j|}{64(b+1)h}\Big\rc.
$$
Note that $m_{i,j} > 1$, as by \eqref{bigprod_match}, {in case of $H$ with a cycle we have}
$$
|V_i||V_j| \ge \frac{(16^h(b+1)h)^{h-1}}{(b/8h)^{h-2}}\ge 256(b+1)h,
$$
{while if $H$ is a tree, then for $t\in[h]\setm\{i,j\}$ we have
$$
|V_i||V_j| \ge \frac{(16^h(b+1)h)^{h-2}|V_t|}{|V_t|(b/8h)^{h-3}}\ge 256(b+1)h.
$$
Thus in both cases
\begin{equation}\label{bignn}
|V_i||V_j| \ge 256(b+1)h\quad\text{for every } i\neq j. 
\end{equation}
}

 Let $ij$ be an arbitrary edge in $H$ with $i\ne j$. Avoider starts the game. After her first move we denote by $\cM_{i,j}$ a maximal matching (possibly empty) formed by Avoider's edges in 
$E(V_i,V_j)$. Let $U_{i}$ and $U_j$ be the sets of all vertices saturated by $\cM_{i,j}$ in $V_i$ and $V_j$ respectively. 
If $|\cM_{i,j}| \ge m_{i,j}$, we define 
$V'_{i}=U_{i}$, $V'_{j}=U_j$, and $V'_r=V_r$ for $r \neq i,j$. 
These vertex sets and the family $\F=\cM_{i,j}$ of paths of length $1$
clearly possess the required properties (P\ref{vprime})--(P\ref{manypaths}). 
So from now on we can assume 
\begin{equation}\label{matching-size}
|\cM_{i,j}|<m_{i,j} \quad \text{for every $ij \in E(H)$ with $i\ne j$}.
\end{equation}

Fix $\{\ell, t\} \in E(H)$ such that $\ell \ne t$ and $m_{\ell,t}=\max_{ij \in E(H), i\ne j}m_{i,j}$. Since $\cM_{\ell,t}$ is a maximal matching, Avoider has no edges between $V_{\ell}\setm U_{\ell}$ and $V_t \setm U_t$.  
Furthermore, we learn from \eqref{matching-size} and \eqref{bigprod_match} that $|V_{\ell}\setm U_{\ell}| \ge |V_{\ell}|-m_{\ell,t} \ge 3|V_{\ell}|/4$. Similarly, $|V_t\setm U_t|>3|V_t|/4$.
Thus we can find two subsets $W_{\ell}\subs V_{\ell}\setm U_{\ell}$ and $W_t\subs V_t\setm U_t$ of sizes
$\lfloor |V_{\ell}|/2\rfloor$ and $\lfloor |V_t|/2\rfloor$, respectively. 
We call the edge-set $E(W_{\ell},W_t)$ {\em the dustbin}.

In what follows we first present Enforcer's strategy separately for graphs $H$ with a loop {or without a cycle}, 
with a cycle $C_2$, and with longer cycles. We then analyse the properties of Avoider's graph.\\

\noindent {\bf Case 1:}  {$H$ is a tree or $H$ has a loop.}

In his first move Enforcer selects $r$ arbitrary edges from the dustbin.
Starting with the second round, 
Enforcer responds to Avoider's move $e\in E(V_i,V_j)$ in the following way.  
If $i=j=1$ and there are 
$5h\cdot m_{\ell,t}$ loops in $V_1$ which have been claimed by Avoider, or if 
$i\ne j$ and there are $4m_{i,j}$ green edges in $E(V_i,V_j)$, then Enforcer 
{\it stops} the game. Otherwise he plays as follows.

\begin{enumerate}[{\rm (a)}]  
\item 
Enforcer selects all free edges in $E(V_i,V_j)$ which are incident to $e$.

\item For every vertex $v$ in the green components other than the green component containing $e$, Enforcer claims all free edges between $v$ and $e$.

\item 
If Enforcer still has to choose more edges, he picks them from the dustbin $E(W_{\ell},W_t)$. 
\end{enumerate}
\noindent
Note that items (a) and (b) only apply if $e$ is not a loop. 
The following claim details the outcome of the game in {Case 1.} 
\begin{claim}\label{claimGbis-case1}
Enforcer can follow the strategy described above. Moreover, throughout the game, the green graph $G$ has the following properties.
\begin{enumerate}[{\rm (i)}]  
\item \label{deg_match_bis}
The edge-set $E(G)\cap E(V_i,V_j)$ is a matching for every edge $ij \in E(H)$ with $i\neq j$.
		
\item \label{separ_bis}
Before every Avoider's move there is no free edge between distinct green components.
		
\item \label{enf_comp_bis}
Enforcer has selected no loops.

\item
Every green component  is a canonical copy of a subgraph of $H$.

\item\label{enf_edge_bis}
Every edge selected by Enforcer is incident to a green edge or belongs to the dustbin.
\end{enumerate}
\end{claim}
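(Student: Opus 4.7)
My plan is to prove the five structural statements of the claim, together with the feasibility of Enforcer's strategy, by a single induction on the round number. In the base case nothing is green, so (i)--(iv) hold vacuously, and Enforcer's first-move quota $r\le b$ is supplied by the dustbin, which contains $|W_\ell||W_t|\ge \tfrac14|V_\ell||V_t|$ edges and is vastly larger than $b$ under \eqref{bigprod_match}. For the induction step, fix a round $k\ge 2$ before the stopping condition has been triggered, and suppose the five properties hold just before Avoider's move. If Avoider picks a loop, $G$ is unchanged and Enforcer uses only rule (c), so everything trivially persists. Otherwise Avoider's edge $e=xy$ with $x\in V_i$, $y\in V_j$ and $i\ne j$ becomes green.

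Property (i) survives because, by induction, if some $xy''$ with $y''\ne y$ were already green then rule (a) at the round when $xy''$ was played would have claimed $e$; so $e$ has no green neighbour in $E(V_i,V_j)$, and rule (a) in round $k$ now claims the remaining matching-complement around $e$. Property (ii) survives because the inductive hypothesis forbids free edges between distinct green components, so $e$ cannot bridge two previously disjoint components; rule (b) in round $k$ then claims every free edge between the (possibly new or extended) component containing $e$ and any other green component. Properties (iii) and (v) are immediate: rules (a) and (b) only claim edges incident to the non-loop $e$, and rule (c) stays inside the loop-free dustbin $E(W_\ell,W_t)$. The most delicate property is (iv). I will strengthen it to the statement that each green component is a canonical copy of a subtree of the underlying tree of $H$ (this is $H$ itself if $H$ is a tree, and $H$ with its loop removed if $H$ has a loop). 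If $e$ had both endpoints in the same existing component $C$ with canonical subtree $H'_C$, then $i,j\in V(H'_C)$ together with the fact that a connected subgraph of a tree containing two vertices must contain the unique tree-path between them yields $ij\in E(H'_C)$; hence $xy$ was already green, contradicting that Avoider is playing it. If $e$ extends $C$ by a new vertex $y$ but $j\in V(H'_C)$, then the unique $V_j$-vertex $y'\in C$ again satisfies that $xy'$ is green by the same argument, and rule (a) at the round of $xy'$ has already claimed $xy$, a contradiction. Any remaining extension attaches a genuinely new leaf to $H'_C$, and the ``both new'' case produces a single-edge subtree, so the strengthened (iv) is preserved.

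The only remaining obligation, which is also the main technical obstacle, is verifying that Enforcer can fill his quota of $b$ edges in round $k$. Since the game has not yet stopped, every bipartite class has fewer than $4m_{i,j}$ green edges and Avoider has taken fewer than $5hm_{\ell,t}$ loops; these caps control the number of edges that rules (a) and (b) demand through elementary degree estimates in the green graph. The residual edges must come from the dustbin, so one has to show that the dustbin still contains enough free edges at every round. Bounding the total number of dustbin picks over all prior rounds against $|W_\ell||W_t|$, and using $m_{\ell,t}=\max_{ij}m_{i,j}$, $|V_i|\le b/(8h)$, and the explicit definition of $m_{i,j}$, yields the needed estimate. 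This bookkeeping is where the parameters in \eqref{bigprod_match} and the dustbin definition come crucially into play, and it is the most delicate step of the argument.
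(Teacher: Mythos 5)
Your induction makes explicit what the paper dismisses as ``straightforward from Enforcer's strategy,'' and the structural argument for (i)--(v) is correct. In particular, your strengthening of (iv) to ``canonical copy of a subtree of the underlying tree of $H$'' is the right invariant, and the case split (both endpoints already in the same component, one old endpoint with $j$ already occupied in that component, a genuinely new leaf, both endpoints new) is exactly what the induction needs; property (ii) at the start of the round rules out merging two components, and rule (a) at earlier rounds rules out the duplicate-class cases. So far the proposal and the paper run parallel, only with inverted emphasis: the paper compresses (i)--(v) to a single sentence and spends its whole proof of this claim on feasibility, which you only sketch.

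Two issues. First, a minor misattribution: the stopping condition plays no role in bounding the per-round demand of rules (a) and (b). Since those rules claim only edges incident to $e$, their combined demand is at most $2h\max_r|V_r|\le 2h\cdot b/(8h)<b$, directly from \eqref{bigprod_match}; no estimate on the size of the green graph is needed for this. Second -- and this is the genuine gap -- the dustbin estimate that you name but do not perform is the entire quantitative content of the claim. The stopping condition caps the number of Avoider moves after round one by $5h\,m_{\ell,t}+\sum_{ij\in E(H)}4m_{i,j}\le 9h\,m_{\ell,t}$, so at most $r+9(b+1)h\,m_{\ell,t}$ edges ever leave the dustbin, and one must verify
\[
\lfloor|V_\ell|/2\rfloor\,\lfloor|V_t|/2\rfloor \;-\; r \;-\; 9(b+1)h\,m_{\ell,t}\;\ge\; b .
\]
Substituting $m_{\ell,t}\le\frac{|V_\ell||V_t|}{64(b+1)h}+1$, $r\le b$, $\lfloor|V_\ell|/2\rfloor\ge\frac{15}{32}|V_\ell|$ (valid because \eqref{bigprod_match} forces $|V_\ell|\ge 16$), and the consequence \eqref{bignn} of \eqref{bigprod_match}, this reduces to a positivity check whose leading coefficient is $\tfrac{225}{1024}-\tfrac{9}{64}-\tfrac{10}{256}$, a margin of roughly $0.04$. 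The constants in the strategy (the $64$ in the definition of $m_{i,j}$, the halving in the dustbin) were chosen precisely so that this margin is positive, and not by much, so ``yields the needed estimate'' is not something one can assert without carrying it through. You should include this computation.
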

\begin{proof}
It is straightforward from Enforcer's strategy that, during the game, the green graph $G$ satisfies conditions (i)--(v). 

We now show that he can follow the strategy. Indeed, he can easily follow rules (a) and (b) because the number of edges incident to $e$ is at most $2h\max_{r\in[h]}|V_r|<b$, due to \eqref{bigprod_match}. To prove that Enforcer can follow (c), it suffices to show that at every moment of the game until it stops, the dustbin contains at least $b$ free edges. Because the number of edges claimed by Avoider from the second round onward is at most 
$5h\cdot m_{\ell,t}+\sum_{ij\in E(H)} 4 m_{i,j}\le 9h\cdot m_{\ell,t}$ (due to Enforcer's stopping condition), the number of free edges in the dustbin is at least
\begin{align*}
|W_{\ell}||W_t|-r-9(b+1)h\cdot m_{\ell,t} &\ge \frac{15|V_{\ell}|}{32}\cdot\frac{15|V_t|}{32}-b-9(b+1)h\cdot \left(\frac{|V_{\ell}||V_t|}{64(b+1)h}+1\right)\\ &\overset{\eqref{bignn}}{\ge} b+\left(\frac{15^2}{32^2}-\frac{9}{64}-\frac{10}{256}\right)|V_{\ell}||V_t|>b,
\end{align*} 
where in the first inequality we estimate $|W_{\ell}|=\lfloor \frac{|V_{\ell}|}{2}\rfloor \ge \frac{15|V_{\ell}|}{32}$ for $|V_{\ell}| \ge 16$, and $|W_t| \ge \frac{15}{32}|V_t|$.    
\end{proof}

\noindent
We next present Enforcer's strategy in the case when $H$ has a $C_2$.\\

\noindent {\bf Case 2:} {$C_2\subs H$.} 

By symmetry, we only need to handle the cases $\{\ell,t\}\neq\{1,2\}$ and $(\ell,t)=(1,2)$.

\noindent {\bf Case 2.a:} $\{\ell,t\}\neq\{1,2\}$.

In his first move Enforcer selects $r$ arbitrary edges from the dustbin.
Starting with the second round, 
Enforcer responds to Avoider's move $e\in E(V_i,V_j)$ in the following way.  
If the number of green edges in $E(V_i,V_j)$ is at least $4m_{i,j}$, then Enforcer {\it stops} the game. Otherwise, he plays according to the following strategy.

\begin{enumerate}[{\rm (a)}]  
\item 
Enforcer selects all free edges $f\in E(V_i,V_j)$ such that $|e\cap f|=1$.\footnote{In particular, Enforcer will not pick the friend of $e$ if $e\in E(V_1,V_2)$.}

\item 
For every vertex $v$ in green components other than the green component containing $e$, 
Enforcer selects all free edges between $v$ and $e$. 


\item 
If Enforcer still has some edges to choose, he selects them from the dustbin $E(W_{\ell},W_t)$. 
\end{enumerate}

\noindent {\bf Case 2.b:} $(\ell,t)=(1,2)$.

Since by assumption $H\ne C_2$ and $H$ is connected, we can assume that 
$\{2,3\}\in E(H)$. 
Due to \eqref{bigprod_match} and \eqref{matching-size}, there exist two subsets $W'_2\subs V_2$, $W'_3\subs V_3$ with 
$\lfloor |V_2|/8\rfloor$ and $\lfloor |V_3|/8\rfloor$ vertices, respectively, such that Avoider has no edges in $E(W'_2,W'_3)$. (The argument for the existence of $W'_2$ and $W'_3$ is the same as the one used for the sets 
$W_{\ell}$ and $W_t$). We call the edge-set $E(W'_2,W'_3)$ {\em the remainder-bin}.

Enforcer plays as follows. In the first round he begins by picking $\lf r/2\rf$ free couples of friends 
in the dustbin $E(W_{\ell},W_t)=E(W_1,W_2)$. 
If $r$ is odd, Enforcer finishes this round by picking an arbitrary edge in the remainder-bin $E(W'_2,W'_3)$. 
From the second round onward, Enforcer responds to Avoider's  move $e\in E(V_i,V_j)$ as follows.
If the number of green edges in $E(V_i,V_j)$ is at least $4m_{i,j}$, then Enforcer again {\it stops} the game. If this is not the case, he plays according to the following strategy.  

\begin{enumerate}[{\rm (a)}]  
\item 
Enforcer selects all free edges $f \in E(V_i,V_j)$ with $|e\cap f|=1$.

\item 
For every vertex $v$ in green components other than the green component containing $e$, 
Enforcer selects all free edges between $v$ and $e$. 


\item 
If Enforcer still has some, say $s$, edges to choose, he selects $\lf s/2\rf$ free couples of friends in 
$E(W_1,W_2)$. After this, he must take either zero edges or one edge, depending on the parity of $s$.

\item  
If Enforcer has to choose one more edge, he picks an arbitrary edge in $E(W'_2,W'_3)$. 
\end{enumerate}

\begin{claim}\label{claimGbis-case2}
Enforcer can follow the strategies described in Cases 2.a and 2.b. Moreover, during the game, the green graph $G$ has the following properties.
\begin{enumerate}[{\rm (i)}]  
\item \label{deg_match_bis}
The edge-set $E(G)\cap E(V_i,V_j)$ is a matching for every $\{i,j\}\neq \{1,2\}$, and
$E(G)\cap E(V_1,V_2)$ consists of pairwise-disjoint edges and copies of $C_2$.
		
\item \label{separ_bis}
Before every Avoider's move there is no free edge between distinct green components.
		
\item \label{enf_comp_bis}
If an edge in $E(V_1,V_2)$ is green, then its friend is not selected by Enforcer.

\item
Every green component  is a canonical copy of a subgraph of $H$.

\item\label{enf_edge_bis}
Every edge selected by Enforcer is incident to a green edge, or belongs to the dustbin or to the remainder-bin. 
\end{enumerate}
\end{claim}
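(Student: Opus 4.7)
The plan is to parallel the proof of Claim~\ref{claimGbis-case1}, verifying the five structural properties of the green graph $G$ directly from the design of Enforcer's rules, and then establishing feasibility via a counting argument on the dustbin (and remainder-bin, in Case~2.b). Property~(i) follows because rule~(a) forces Enforcer to grab every free edge in $E(V_i,V_j)$ sharing exactly one vertex with Avoider's move $e$; the only way two green edges in $E(V_i,V_j)$ can share an endpoint is if $\{i,j\}=\{1,2\}$ and they are friends, which is precisely the exception allowed. Property~(ii) is built into rule~(b): whenever a new green edge $e$ appears, Enforcer immediately claims all free edges between $e$ and the vertices of every other pre-existing green component, so no cross-component free edge can survive until Avoider's next move. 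Property~(iii) in Case~2.a is automatic because Enforcer takes no edges in $E(V_1,V_2)\setminus E(W_1,W_2)$, and in fact $(\ell,t)\ne(1,2)$ so the dustbin avoids $E(V_1,V_2)$ entirely; in Case~2.b it holds by construction, since Enforcer only takes edges of $E(W_1,W_2)$ as pairs of friends and only takes edges of $E(W'_2,W'_3)$ singly, so the friend of any green edge in $E(V_1,V_2)$ is either free or green. Property~(iv) then follows from (i)--(iii) together with the observation that a green component together with its induced edges is a canonical subgraph of the blow-up corresponding to some sub-multigraph of $H$. Property~(v) is immediate from reading off Enforcer's rules.

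Feasibility of the strategy is the quantitative step. The stopping rule guarantees that from the second round onward Avoider has claimed at most $\sum_{ij\in E(H)}4m_{i,j}\le 8h\cdot m_{\ell,t}$ green edges, so the total number of rounds is bounded by this quantity. In each round, rules (a) and~(b) force Enforcer to take at most $2h\max_r|V_r|<b$ edges, since the number of edges incident to $e$ or joining $e$ to another green component is crudely bounded by $2h\max_r|V_r|$, which is smaller than~$b$ by \eqref{bigprod_match}. Hence across the whole game the total number of edges Enforcer needs to draw from the dustbin is at most $r+b(b+1)\cdot 8h\cdot m_{\ell,t}$, which after plugging in $m_{\ell,t}=\lceil |V_\ell||V_t|/(64(b+1)h)\rceil$ and using \eqref{bignn} will be safely below $|W_\ell||W_t|/2$, exactly as in the computation of Claim~\ref{claimGbis-case1}.

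The new feature, and main obstacle, is the parity bookkeeping required in Case~2.b. Here the dustbin $E(W_1,W_2)$ must be drawn only in couples of friends (since any single edge Enforcer takes in $E(V_1,V_2)$ would violate~(iii) once Avoider takes its friend), and the remainder-bin $E(W'_2,W'_3)\subseteq E(V_2,V_3)$ exists solely to absorb the odd residue after rules (a)--(c). The argument is that since $b$ is even, the deficit $s$ after steps~(a) and~(b) has some parity, and Enforcer removes $\lfloor s/2\rfloor$ couples from the dustbin, leaving either~$0$ or~$1$ edge to be taken; that single edge is drawn from the remainder-bin. So the remainder-bin is used at most once per round, i.e.\ at most $8h\cdot m_{\ell,t}+1$ times total, which is dwarfed by $|W'_2||W'_3|\ge \lfloor|V_2|/8\rfloor\lfloor|V_3|/8\rfloor$ thanks to \eqref{bignn}. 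The dustbin usage, counted in couples, is bounded by $(b/2)\cdot 8h\cdot m_{\ell,t}+\lfloor r/2\rfloor\le |W_1||W_2|/4$ by the same estimate as in Case~1. The only subtlety to check carefully is that neither the dustbin couples nor the remainder-bin edges ever coincide with edges already claimed or with edges incident to a green component (so as to preserve (iv) and (v)); this holds because both bins lie in parts disjoint from the matching $\cM_{\ell,t}$ and from the green graph, which by construction lives outside $W_\ell\cup W_t\cup W'_2\cup W'_3$ at the moment of selection.
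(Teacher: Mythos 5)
Your overall approach — reading off properties (i)--(v) from the rules and then verifying feasibility by a counting argument on the two ``bins'' — mirrors the paper. However, two parts of your argument do not go through as stated, and one of them is a genuine gap.

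The more serious problem is your feasibility argument for the remainder-bin in Case~2.b. You assert that the remainder-bin is touched ``at most once per round, i.e.\ at most $8h\cdot m_{\ell,t}+1$ times total, which is dwarfed by $|W'_2||W'_3|$.'' This implicitly assumes that only rule~(d) can deplete $E(W'_2,W'_3)$, which is false: Avoider can claim edges with endpoints in $W'_2$ or $W'_3$, and once that happens, rules~(a) and~(b) force Enforcer to sweep up many adjacent edges, potentially a large portion of $E(W'_2,W'_3)$ in a single round. Your closing sentence explicitly claims that the green graph ``by construction lives outside $W_\ell\cup W_t\cup W'_2\cup W'_3$ at the moment of selection,'' but this is not true — nothing in the strategy prevents Avoider from building green components inside these sets; the $W$-sets were only chosen to avoid the matching $\cM_{\ell,t}$ from Avoider's \emph{first} move. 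The paper's fix is to pass to the subsets $W''_i \subseteq W'_i$ of vertices not incident to green edges: one first bounds the number of green vertices in each $V_i$ by roughly $|V_i|/64$ (using the stopping rule and $m_{i,j}$ estimates), showing $|W''_i|\ge 3|V_i|/64$, and then observes that rules (a), (b), (c) never select edges with both endpoints in $W''_2\cup W''_3$, so only rule (d) depletes $E(W''_2,W''_3)$ — at most once per round. This refinement is precisely the content you are missing.

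A secondary issue: your justification of property~(iii) in Case~2.a (``Enforcer takes no edges in $E(V_1,V_2)\setminus E(W_1,W_2)$'') is not correct — Enforcer certainly can claim edges in $E(V_1,V_2)$ through rules~(a) and~(b). What actually makes (iii) hold is that rule~(a) explicitly excludes friends (the $|e\cap f|=1$ condition, as the footnote emphasizes), and rule~(b) only claims edges joining two distinct green components, which can never be the friend of an existing green edge. Similarly in Case~2.b you only account for rules~(c) and~(d) when discussing (iii). The conclusion is still correct, but the reasoning needs to address rules~(a) and~(b) directly. There is also an extraneous factor of $b$ in your dustbin count $r+b(b+1)\cdot 8h\cdot m_{\ell,t}$ (it should be $r+(b+1)\cdot 8h\cdot m_{\ell,t}$, since each of Avoider's at most $8h m_{\ell,t}$ green moves triggers one round of $b+1$ claimed edges), though this reads as a slip rather than a conceptual error.
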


\begin{proof}
It is easy to see that if Enforcer can carry out his game plan, then the green graph satisfies Properties (i)--(v). The proof that Enforcer can follow rules (a)--(c) in both cases is exactly as in Claim \ref{claimGbis-case1}, which we need not repeat here. 

To complete the proof, we show that at every step of the game until it stops, the remainder-bin $E(W'_2,W'_3)$ contains at least one free edge; 
this will clearly imply that Enforcer can perform (d) in Case 2.b. 
For $i=2,3$, let $W''_i$ be the set of all vertices of $W'_i$ not incident to green edges.
First observe that due to the stopping condition, the number of endpoints of green edges in $V_i$ is at most
$$
\sum_{j:\, ij\in E(H)} 4 m_{i,j}<
\sum_{j:\, ij\in E(H)} \frac{8|V_i||V_j|}{64(b+1)h}
\overset{\eqref{bigprod_match}}{<}
|V_i|/64.
$$
Thus $|W''_i|> \lf |V_i|/8\rf - |V_i|/64\ge 3|V_i|/64$.
Furthermore, the number of edges selected by Enforcer in $E(W''_2,W''_3)$ is not greater than
$1+\sum_{ij \in E(H)}4m_{i,j} \le 1+4h\cdot m_{1,2}$, since in every round Enforcer selects at most one edge in $E(W''_2,W''_3)$; indeed, he only selects an edge in $E(W''_2,W''_3)$ when he invokes rule (d) in Case 2.b. 
Therefore the number of free edges in the remainder-bin is not less than 
$$
|W''_2|\cdot |W''_3|-1-4h\cdot m_{1,2}>
\frac{9|V_2|\cdot|V_3|}{2^{12}}-\frac{|V_1|\cdot|V_2|}{4(b+1)}
\overset{\eqref{bigprod_match}}{>}
\frac{9|V_2|\cdot 4^h}{2^{12}}-\frac{|V_2|}{32} \geq 0.
$$
Hence the remainder-bin contains a free edge.
\end{proof}

\noindent
Finally we describe Enforcer's strategy in the case when the unique cycle of $H$ has length $k\ge 3$.
 
\noindent {\bf Case 3:} $C_k\subs H$ and $k\ge 3$.

In the first round Enforcer selects $r$ arbitrary edges from the dustbin.
In each subsequent round, Enforcer responds to Avoider's move $xy\in E(V_i,V_j)$ in the following way.
He {\it stops} the game if there are $4m_{i,j}$ green edges in $E(V_i,V_j)$ 
and otherwise he proceeds as follows.  

\begin{enumerate}[{\rm (a)}]  
\item\label{inc_xy}
Enforcer selects all free edges in $E(V_i,V_j)$ which are incident to $xy$.

\item\label{sep_comp}
For every vertex $v$ in green components other than the green component containing $xy$, 
Enforcer selects all the free edges among $xv$ and $yv$. 

\item\label{threat_xy}
Suppose that the green component of $G_k$ containing $xy$ 
is a canonical path $P$ on $k$ vertices. 
If 
$u \in V_s$ and $w \in V_{s+1\pmod k}$ are the endpoints of $P$ for some $s \in [k]$,
then Enforcer selects all free edges in $E(V_{s},V_{s+1\pmod k})\setminus \{uw\}$ which are incident to $uw$.

\item\label{path_trash}
Suppose that the green component of $G_k$ containing $xy$ 
is a canonical path $P$ on $k-1$ vertices. 
Let $s \in [k]$ be such that $V_s \cap V(P)=\emptyset$, let
$u \in V_{s-1\pmod k}$ and $w \in V_{s+1\pmod k}$ 
be the endpoints of $P$,
and suppose that $\{u,w\}\cap (W_{\ell} \cup W_t) \ne \emptyset$.
Then Enforcer selects all free edges between $\{u,w\}$ and 
$V_s\cap(W_{\ell}\cup W_t)$.

\item\label{trash}
If Enforcer still has some edges to choose, he selects them from the dustbin $E(W_{\ell},W_t)$. 
\end{enumerate}

\noindent
The following claim {summarises} the effects of Enforcer's strategy. 

\begin{claim}\label{claimG}
Enforcer can follow the above strategy. Furthermore, the green graph $G$ has the following properties throughout the game.	
\begin{enumerate}[{\rm (i)}]  

\item\label{deg_match}
The edge-set $E(G)\cap E(V_i,V_j)$ is a matching for every $ij\in E(H)$.
		
\item\label{separ}
Before every Avoider's move there is no free edge between distinct green components.
		
\item\label{enf_edge}
Let $e = uw$ be Enforcer's edge and suppose that $e \in E(V_i,V_j)$. Then at least one of the following holds.
\begin{enumerate}
	\item There is a green edge in $E(V_i,V_j)$ which is incident to $e$. 
	\item $u,w \in V(G)$.
	\item $i,j \in [k]$ and there is a canonical green path on $k$ vertices in $G_k$ between $V_i$ and $V_j$ (in particular, $j-i \equiv \pm 1 \mod{k}$), one of whose endpoints is $u$ or $w$.
	\item $u$ is an endpoint of a green path on $k-1$ vertices in $G_k$ and $w \in W_{\ell} \cup W_t$, or vice versa. 
	\item $e \in E(W_{\ell},W_t)$.
\end{enumerate}
In particular, either $e \in E(W_{\ell},W_t)$ or $e$ is incident to a green edge. 
			
\item\label{nogiant}
Every component of $G_k$ has at most $k$ vertices.
		
\item\label{enf_comp}
No Enforcer's edge has both ends in the same component of $G_k$.
\end{enumerate}
\end{claim}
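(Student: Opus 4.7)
The plan is to prove Claim~\ref{claimG} by induction on the number of rounds, maintaining the five properties (i)--(v) throughout. The base case (no moves made) is trivial since $G$ is empty. For the inductive step, we assume the properties hold at the start of a round and show they persist after Avoider's move and Enforcer's response. Executability reduces to verifying that rule (e) always has enough free edges in the dustbin $E(W_\ell, W_t)$: the stopping condition (``stop when $4m_{i,j}$ green edges appear in some $E(V_i, V_j)$'') caps Avoider's total moves after round~1 by $O(h\, m_{\ell,t})$, so rule (e) consumes $O((b+1)\, h\, m_{\ell,t})$ dustbin edges, far fewer than the $|W_\ell|\cdot|W_t| = \Theta(|V_\ell||V_t|)$ available, paralleling the calculation in Claim~\ref{claimGbis-case1}.

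Properties (i) and (ii) are immediate consequences of rules (a) and (b): rule (a) preserves the matching property in each bipartite class $E(V_i, V_j)$, while rule (b) separates the updated green component containing $xy$ from every other green component. Property (iii) is a direct rule-by-rule inspection: each of (a)--(e) produces edges satisfying the corresponding clause in the claim.

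The heart of the proof is properties (iv) and (v). Combining (i) with rule (c)'s blocking of the closing edge of any $k$-path shows that $G_k$ is acyclic, so each $G_k$-component is a path with at most $k$ vertices by the inductive hypothesis. When Avoider plays a cycle-edge $xy \in E(V_i, V_{i+1\pmod k})$, let $P_x, P_y$ denote the $G_k$-components of $x, y$. We split into three cases. (1) If $P_x = P_y$, the vertex set is unchanged, so the bound persists. (2) If one of $x, y$ is isolated in $G_k$ and the other lies in a $k$-path $P$, this scenario is impossible: every extension-edge of $P$ has already been claimed, either by rule (c) (applied when $P$ first became a $k$-path) or by an earlier matching-enforcing activation of rule (a); hence in fact $|P| \le k-1$ and the merged component has $\le k$ vertices. (3) If $P_x \ne P_y$ are distinct nontrivial paths, we claim (ii) already forbids $xy$ from being free, which requires showing $P_x, P_y$ lie in distinct $G$-components. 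Any path $\alpha$ in $G$ joining $P_x$ to $P_y$, together with $xy$, forms a cycle in $G\cup\{xy\}$ whose projection to $H$ is a closed walk; exploiting the unicyclic structure of $H$ together with the matching property (i) (which forbids a three-vertex subwalk whose first and third vertices lie in the same part but are distinct) yields a contradiction. Property (v) is then verified rule-by-rule: the canonical one-vertex-per-part structure of $G_k$-paths from (iv) ensures that the two endpoints of every Enforcer edge lie in different $G_k$-components.

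The principal obstacle is case (3) of (iv): ruling out that two distinct $G_k$-paths lie in a common $G$-component. The backtrack-forces-matching-violation argument on the projection to $H$ is the key new ingredient compared to Cases~1 and~2.
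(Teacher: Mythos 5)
Your overall scaffolding (induction on rounds, (i)--(ii) from rules (a)--(b), (iii) by rule-by-rule inspection, executability via the dustbin estimate) matches the paper's proof, but there are two substantive problems.

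First, a factual error in (iv): you assert that ``rule (c)'s blocking of the closing edge of any $k$-path shows that $G_k$ is acyclic.'' Rule (c) explicitly \emph{excludes} the edge $uw$ joining the two endpoints of the canonical $k$-path; Enforcer does not block the closing edge, so Avoider may close a canonical $k$-cycle in $G_k$. Consequently the components of $G_k$ can be cycles as well as paths. This does not break the bound ``$\le k$ vertices,'' so property~(iv) is salvageable, but the stated reasoning is wrong. In case~(3) of (iv), your route through ``cycle in $G\cup\{xy\}$ projects to a non-backtracking closed walk in $H$, hence stays on $C_k$'' is a genuinely different argument from the paper's. It can be made to work, but it needs the lemma that a non-backtracking closed walk in a unicyclic graph uses no bridge edge, which you only gesture at. The paper instead observes directly that, by (i), (ii) and the inductive hypothesis, every green component is a \emph{canonical} copy of a connected subgraph $H'\subseteq H$, whose intersection with $C_k$ is connected, so two distinct nontrivial $G_k$-components must lie in distinct green components and (ii) immediately forbids $xy$ from being free. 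This is shorter and is the route you should take.

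Second, and more seriously, your one-line justification of (v) does not work. You write that ``the canonical one-vertex-per-part structure of $G_k$-paths from (iv) ensures that the two endpoints of every Enforcer edge lie in different $G_k$-components,'' but property~(iv) alone does not exclude the possibility that both ends of an Enforcer edge land in the same canonical $k$-vertex path of $G_k$. The genuinely delicate case is an Enforcer edge $uv\in E(W_\ell,W_t)$ claimed under rule~(e). Suppose later $u,x_2,\dots,x_{k-1},v$ is a canonical $k$-path in $G_k$. To rule this out one must look at the edge of that path claimed last by Avoider (say $ux_2$), and observe that immediately after Avoider completed the shorter path $x_2,\dots,x_{k-1},v$, rule~(d) required Enforcer to claim every free edge between its endpoints and $V_\ell\cap(W_\ell\cup W_t)$, in particular $ux_2$ --- a contradiction. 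Your sketch never invokes rule~(d), yet (d) exists in the strategy precisely to make this case of (v) go through. A separate (easier) argument is also needed for each of rules (a), (b), (c) and (d) themselves; this case analysis is the actual content of (v), and cannot be collapsed into the structural consequence of (iv) as you propose.
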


\begin{proof}
In much the same way as in the proof of Claim \ref{claimGbis-case1}, we learn that Enforcer can follow the strategy. It remains to show that the green graph satisfies properties (i)--(v).
 
Properties (\ref{deg_match}) and (\ref{separ}) follow immediately from rules (\ref{inc_xy}) and (\ref{sep_comp}), respectively.
These two properties and the fact that $H$ is unicyclic imply that at every moment:
\begin{itemize}
\item
the graph $G_k$ is a union of vertex-disjoint non-trivial paths and cycles;
\item
if a path in $G_k$ has $k$ vertices or less, then it is a canonical path;
\item
every green cycle has at least $k$ vertices, and if it has exactly $k$ vertices, then it is a canonical cycle;
\item
if there is no path on $k+1$ vertices in $G_k$, then every green component of $G$ is a canonical copy of a subgraph of $H$;
\item 
Avoider cannot decrease the number of green components nor the number of  $G_k$. 
\end{itemize}
We will use these observations implicitly many times in the proof. Properties \eqref{enf_edge} (a) -- \eqref{enf_edge} (e) follow immediately from rules (a) -- (e) in Enforcer's strategy.

We now prove property \eqref{nogiant} by induction on the number of rounds in the game. 
After the first round there are no green edges,
so the assertion is trivial. Suppose that the assertion is true
until the end of some round and consider Avoider's next move, say $xy\in E(V_i,V_{i+1})$, where 
$x \in V_i$ and $y \in V_{i+1}$, and indices are taken modulo $k$ (if Avoider claims an edge outside of $\bigcup_{i=1}^{k}{E(V_i,V_{i+1})}$ then this clearly does not affect property \eqref{nogiant}).     
To prove that property \eqref{nogiant} holds after Avoider's move $xy$, assume by contradiction that  
the component of $G_k$ containing $xy$ has more than $k$ vertices. 
By the induction hypothesis, \eqref{nogiant} held before Avoider's move $xy$. Moreover, since Avoider cannot join two 
 $G_k$, either $x$ or $y$, say $y$, was not incident to an edge of $G_k$ before the move $xy$. This implies that 
 the component of $G_k$ containing $xy$ has $k+1$ vertices (so it is a path), and that before Avoider's move $xy$, 
the component of $G_k$ containing $x$ was of size $k$. By property \eqref{deg_match}, this component had to be 
a canonical path $x_1,\dots,x_k$ in which $x$ is an endpoint, say $x = x_1 \in V_i$, and the other endpoint $x_k$ is in $V_{i+1}$. 
Consider the time just after the last edge $e$ of the path $x_1,\dots,x_k$ was claimed by Avoider. 
At that time, the path $x_1,\dots,x_k$ was the green component of $G_k$ containing $e$.
By rule \eqref{threat_xy} of Enforcer's strategy, he then claimed all edges
between $V_i$ and $V_{i+1}$ other than $x_1x_k$, including the edge $xy$. 
Hence $xy$ could not have been claimed by Avoider at a later time, a contradiction. 

To finish the proof of the claim, we now derive property \eqref{enf_comp} from properties
\eqref{deg_match}--\eqref{nogiant}. 
Assume to the contrary that the endpoints of some Enforcer's edge $uv$ are in the same component of $G_k$, 
and suppose that  $u \in V_i$ and $v \in V_{i+1}$, with indices taken modulo $k$.  
Since every component of $G_k$ is a canonical path or cycle, 
the component of $G_k$ containing $u,v$ is a canonical green path $u,x_2,\dots,x_{k-1},v$. 
We will consider several cases, depending on the rule according to which Enforcer selected $uv$. 

If $uv$ were selected according to rule \eqref{inc_xy}, then there must be an Avoider's edge in $E(V_i,V_j)$ 
which is incident to either $u$ or $v$. But then the green component of $G_k$ containing $u,v$ has at least $k+1$ vertices, which contradicts \eqref{nogiant}. 

If $uv$ were selected according to rule \eqref{sep_comp}, then at some point in the game, $u$ and $v$ were in different green components. However, as $u$ and $v$ are in the same green component at the end, Avoider must have joined these two components, which is impossible. 

If $uv$ were selected according to rule \eqref{threat_xy}, then at the moment just before it was claimed, in $G_k$ there was a  $k$-vertex path between $V_i$ and $V_{i+1}$ with one endpoint in $\{u,v\}$ and the other not in $\{u,v\}$. This implies that the component of $G_k$ containing $u,v$ has at least $k+1$ vertices, in contradiction to \eqref{nogiant}.  

Suppose that $uv$ were selected according to rule \eqref{trash}, 
and assume without loss of generality that $u \in W_{\ell}$ and $v \in W_t$.
Consider the edge of the path $u,x_2\dots,x_{k-1},v$ that Avoider claimed the latest. 
Since Avoider cannot join  $G_k$, this edge must be either $u x_2$ or $x_{k-1}v$; 
assume without loss of generality that it is $u x_2$. 
Consider the situation immediately after Avoider claimed the last edge $e$ of the path $P=x_2,x_3,\dots,x_{k-2},x_{k-1},v$. 
At that time, $P$ was the component of $G_k$ containing $e$, $V(P) \cap V_i = \emptyset$ and $v \in W_t$. 
According to rule \eqref{path_trash} of his strategy, Enforcer immediately claimed all edges in $E(\{x_2,v\},W_{\ell})$, including $ux_2$. Hence Avoider could not have claimed $ux_2$ at a later time, a contradiction. 

Assume that $uv$ were selected according to rule \eqref{path_trash}, and consider the situation immediately before Enforcer's turn in which he claimed $uv$. Without loss of generality, we may assume that 
$v \in W_{\ell} \cup W_t$, and that at this time in the game, the component of $G_k$ containing $u$ is a canonical $(k-1)$-vertex path $P$ satisfying
$V_{i+1} \cap V(P) = \emptyset$. In particular, the edge $x_{k-1}v$ must be free at this time. By property \eqref{deg_match}, we must have 
$P = u,x_2,\dots,x_{k-1}$. By rule \eqref{path_trash}, Enforcer must claim the edge $x_{k-1}v$ at the same turn he claims $uv$, contradicting the fact that Avoider claimed $x_{k-1}v$.  
\end{proof}

{\noindent We are now in a position to finish the proof.}\newline
\textbf{Deriving the lemma from Claims \ref{claimGbis-case1}--\ref{claimG}}. Let $G$ denote the green graph at the end of the game. According to the stopping condition, we know that $|E(G)\cap E(V_i,V_j)| \le 4m_{i,j}$ for all $ij \in E(H)$, and either Avoider's graph contains at least $5h\cdot m_{\ell,t}$ loops or $|E(G)\cap E(V_i,V_j)|=4m_{i,j}$ for some pair $ij \in E(H)$ with $i\ne j$.
So the proof falls naturally into two cases.

Let us first consider the case when Avoider's graph has at least $5h\cdot m_{\ell,t}$ loops. In particular, we must have $k=1$. Let $\F$ be the set of all loops in Avoider's graph whose endpoints are not incident to a green edge. 
Define $V'_1=V(\F)$, and 
$V'_i=V_i\setm(V(G)\cup W_{\ell}\cup W_t)$ for every $i>1$.
Due to the construction, property (P\ref{vprime}) is satisfied. 

We next verify (P\ref{bigsets}). Fix $i\in \{2,\ldots,h\}$. The number of green edges incident to vertices of $V_i$ is at most
$$
\sum_{j:\, ij\in E(H)} 4 m_{i,j}<
\sum_{j:\, ij\in E(H)} \frac{8|V_i||V_j|}{64(b+1)h}
<|V_i|/8,
$$
where we used \eqref{bigprod_match}.
Furthermore, $|V_i\cap (W_{\ell}\cup W_t)|=\max\{|V_i\cap W_{\ell}|,|V_i\cap W_t|\} \le |V_i|/2$. Hence, we have
$
|V'_i|=|V_i\setm (V(G)\cup W_{\ell}\cup W_t)|> |V_i|-|V_i|/2-|V_i|/8>|V_i|/4,
$
as required. 

Properties (P\ref{no_enf}) and (P\ref{no_enf_threat}) are direct consequences of Claim \ref{claimGbis-case1} (v) and Claim \ref{claimGbis-case1} (iii), respectively. Finally, property (P\ref{manypaths}) holds since
$$
|\F| \ge 5h\cdot m_{\ell,t}-\sum_{1j\in E(H), j>1}4m_{1,j} \ge h\cdot m_{\ell,t} \ge \max_{ij \in E(H), i \ne j}\frac{|V_i||V_j|}{64(b+1)h}.
$$

For the rest of the proof, we consider the case when 
$|E(G)\cap E(V_i,V_j)|=4m_{i,j}$ for some pair $ij \in E(H)$ with $i\ne j$. 
Fix such a pair $i,j$.
From Claims \ref{claimGbis-case1} (i), \ref{claimGbis-case2} (i) and \ref{claimG} (i), it follows that $E(G)\cap E(V_i,V_j)$ contains a matching $\cM$ of size $2m_{i,j}$. Let $s$ be the greatest number such that there exists a tree 
$T_s \subseteq H$ on $s$ vertices, 
and a family $\F_0$ of at least 
$$\frac{2\prod_{i\in V(T_s)} |V_i|}{(64(b+1)h)^{s-1}}$$ 
vertex-disjoint canonical green copies of $T_s$ in 
${\mathbb B}_{H}(V_1,\ldots,V_h)$. Clearly 
$s\ge 2$ because $\cM$ is a family of trees on two vertices with the required property. 
Let $\F$ be the family of all trees in $\F_0$ which are maximal in $G$, where a tree in $\mathcal{F}_0$ is maximal if it is not contained in a canonical copy of some tree $T_s \subsetneq T \subseteq H$. 
For every $i\in V(T_s)$, set $V'_i=V_i \cap \bigcup_{F \in \F}{V(F)}$. For $i\notin V(T_s)$, define

\begin{equation*}
V'_i=
\begin{cases}
V_i\setm(V(G)\cup W_1\cup W_2\cup W'_2\cup W'_3) &\quad \text{if $k=2$ and $\{\ell,t\}=\{1,2\}$},\\
V_i\setm(V(G)\cup W_{\ell}\cup W_t) &\quad \text{otherwise}. 
\end{cases}
\end{equation*}

Property (P\ref{vprime}) is obvious from the definitions.
By a similar argument as in the previous case, we can show that property (P\ref{bigsets}) holds. 
The only difference in calculations here is the case that $k=2$ and $\{\ell,t\}=\{1,2\}$. In this case, if $2\notin V(T_s)$ then
 $$
|V'_2|\ge |V_2|-|V_2\cap V(G)|-|W_2|-|W'_2|> |V_2|-|V_2|/8-|V_2|/2-|V_2|/8={|V_2|/4}.
$$

Property (P\ref{no_enf_threat}) follows immediately from Claims~\ref{claimGbis-case1} (\ref{enf_comp_bis}), \ref{claimGbis-case2} (iii) and \ref{claimG} (\ref{enf_comp}), 
 and the fact that the trees in $\F$ are canonical.

Next we verify property (P\ref{no_enf}). For simplicity of exposition, we assume that $k\ge 3$, as the proofs for the cases $k=1$ and $k=2$ are similar (and much easier). Suppose to the contrary that Enforcer has occupied an edge 
$uw\in E(V'_i,V'_j)$ (where $u \in V'_i$ and $w \in V'_j$) with $ij \in E(H)$ and $j\in V(H)\setm V(T_s)$. 
Suppose first that $i\notin V(T_s)$. Then $u,w \notin V(G)\cup W_{\ell}\cup W_t$ by the definition of $V'_i$ and $V'_j$.
In view of Claim~\ref{claimG} (\ref{enf_edge}), the edge $uw$ was not selected by Enforcer, a contradiction. Suppose, then, that $i\in V(T_s)$. In this case, $u\in V'_i$ is a vertex of a green tree in $\F$, and $w \notin W_{\ell} \cup W_t \cup V(G)$.
Since trees in $\F$ are maximal in $G$, there are no green edges in $E(V_i,V_j)$ that are incident to $u$. 
As $w \notin W_{\ell} \cup W_t \cup V(G)$, it follows that $uw\notin E(W_{\ell},W_t)$ and $uw$ is not incident to a green edge in 
$E(V_i,V_j)$. 
Claim~\ref{claimG} (\ref{enf_edge}) thus implies that 
$i,j \in [k]$ and $u$ is an endpoint of a canonical $k$-vertex path $P$ in $G_k$ between $V_i$ and $V_j$. Since $i \in V(T_s)$ and $j \in V(H) \setminus V(T_s)$, the tree $F \in \mathcal{F}$ containing $u$ can be extended to a canonical copy of some tree $T_s \subsetneq T \subseteq H$ by adding vertices of the path $P$. This contradicts the maximality of $F$.    

It remains to verify property (P\ref{manypaths}). If $V(T_s)=V(H)$, then  
$|\F_0|=|\F|>\frac{\prod_{i\in V(T_s)} |V_i|}{(64(b+1)h)^{s-1}}$ as required. Now suppose that $V(T_s)\neq V(H)$. Fix a tree $T$ such that $T_s\subset T\subset H$ and $V(T)\setminus V(T_s)=\{j\}$.
By Claims \ref{claimGbis-case1} (i), \ref{claimGbis-case2} (i) and \ref{claimG} (i), every two canonical green copies of $T$ in ${\mathbb B}_{H}(V_1,\ldots,V_h)$ are vertex-disjoint.
By the maximality of $s$, 
the number of canonical green copies of $T$ in ${\mathbb B}_{H}(V_1,\ldots,V_h)$ is less than
$$\frac{2\prod_{i\in V(T)} |V_i|}{(64(b+1)h)^{s}}\le \frac{|V_j||\F_0|}{64(b+1)h}\overset{\eqref{bigprod_match}}{<}\frac{|\F_0|}{64h}.$$
Thus the number of trees in $\F_0$ which are not maximal in $G$ is at most $e(H)\cdot \frac{|\F_0|}{64h}=\frac{|\F_0|}{64}$, giving 
$$
|\F|>\tfrac12 |\F_0|\ge \frac{\prod_{i\in V(T_s)} |V_i|}{(64(b+1)h)^{s-1}}.
$$
{The proof of Lemma \ref{blowup_matching} is at long last complete.} 
\end{proof}

\section{Concluding remarks and open problems}
\label{sec:conc_remarks}

We {have} proved that $\fl_H(n)=\Theta\big(n^{1/m(H)}\big)$ for every graph $H$ with {at least two edges and $m(H)\le 1$}. 
We believe that this is true for every (not necessarily connected) graph with at least two edges. 

\begin{conj}\label{conj_lower}
For every graph $H$ with at least two edges, {one has} 
$$\fl_H(n)=\Theta\big(n^{\frac{1}{m(H)}}\big).$$
\end{conj}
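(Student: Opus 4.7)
The plan is to extend the blow-up strategy of Section~\ref{sec:blowup} from trees and unicyclic graphs to arbitrary graphs, thereby completing the lower-bound side of the conjecture for $m(H)>1$. Since the upper bound $\fl_H(n)=O(n^{1/m(H)})$ is already supplied by Theorem~\ref{lowerH}, and since the disconnected case can be handled by the divide-and-conquer argument in the proof of Theorem~\ref{lowerm1} (splitting $V(K_n)$ into parts, one per component, and playing parallel games; the bound $m(H_i)\le m(H)$ guarantees each component game needs only bias $\Omega(n^{1/m(H)})$), it suffices to prove $\fl_H(n)=\Omega(n^{1/m(H)})$ when $H$ is connected with $m(H)>1$.

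For connected $H$, the natural template is to partition $V(K_n)$ into $h=v(H)$ parts $V_1,\ldots,V_h$ of size $\sim n/h$, force Enforcer to first claim all non-blow-up edges (absorbing the resulting irregular move into the ``$r$-edge first round'' mechanism already appearing in Theorem~\ref{upper_strategy}), and then play on $\B_H(V_1,\ldots,V_h)$. The main step is a direct generalisation of Proposition~\ref{blowup_unic}: for any connected $H$ and any bias $b$ satisfying an analogue of \eqref{bigprod_match}, Enforcer has a strategy to force either a canonical copy of $H$ or at least $\prod_i|V_i|/(C(H)(b+1))^{e(H)-1}$ canonical $H$-threats. Combined with Lemma~\ref{bigrem} (applied with $\alpha = 2m(H)$, which is $>1$ as needed since $m(H)>1/2$), this would yield biases $b=\Theta(n^{1/m(H)})$ with the correct divisibility of $\binom{n}{2}$, and then Fact~\ref{fact:threats} closes the argument as in the proofs of Theorems~\ref{lowerTree}--\ref{thm:even_cycle}.

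To prove the generalised proposition, I would induct on $v(H)$ as in Section~\ref{sec:prop-threats}. The inductive step should mimic Lemma~\ref{blowup_matching}: choose a proper subgraph $F_s\subsetneq H$ (ideally one with $m(F_s)<m(H)$, so that Enforcer's bias budget is generous relative to $F_s$), force many vertex-disjoint canonical green copies of $F_s$ using the matching / dustbin construction, and then contract each copy to reduce to a blow-up of the multigraph $H'=H/F_s$ on fewer vertices. The strict balance of a minimally-chosen $F\subseteq H$ with $m(F)=m(H)$ ensures that the sub-density argument in Case~1 of the proof of Proposition~\ref{blowup_unic} still goes through after the change of parameters.

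The main obstacle is the inductive step for graphs with cyclomatic number greater than one. In the unicyclic case the contracted multigraph $H'$ is again unicyclic (or a tree), so the base case of Lemma~\ref{blowup_triangleM} on the single multigraph $C_2$ suffices; moreover Claim~\ref{claimG}(\ref{nogiant})--(\ref{enf_comp}) rely decisively on the uniqueness of the cycle $C_k\subseteq H$. For graphs with multiple cycles, contracting a subtree yields a multigraph $H'$ with the same cyclomatic number, so the base case must be enlarged to handle arbitrary connected multigraphs with no tree-edges (i.e., graphs that are essentially a bouquet of cycles and multiedges), and the tracking of ``$G_k$-components'' must be replaced by an invariant that controls every cycle of $H$ simultaneously. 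The ``dustbin'' and ``remainder-bin'' sets must be enlarged to avoid accidentally closing or blocking any of the cycles during Enforcer's filler moves; essentially one reserves, for each cycle of $H$, a bipartite piece in which Enforcer is forbidden to play. I expect this bookkeeping to be the hardest part of a full proof, and verifying it will likely require a more delicate strictly-balanced subgraph argument rather than the uniform $F_s$-contraction used in Section~\ref{sec:inductive-step}.
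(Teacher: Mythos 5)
The statement you are addressing is posed in the paper as an open conjecture (Conjecture~\ref{conj_lower}); the paper establishes it only in the regime $m(H)\le 1$ (combining Theorems~\ref{lowerH} and \ref{lowerm1}), and for $m(H)>1$ leaves it explicitly open, remarking only that ``it is enough to find a good strategy for Enforcer.'' So there is no paper proof to compare against. Your proposal is, by your own admission, a programme rather than a proof: the last paragraph correctly identifies the inductive step for graphs of cyclomatic number at least two as the missing ingredient, and correctly observes that contraction then produces multigraphs whose structure Claim~\ref{claimG}(\ref{nogiant})--(\ref{enf_comp}) (which leans on the uniqueness of the cycle) cannot control. That diagnosis matches what the paper's machinery can and cannot do, but nothing in the proposal resolves it, so the conjecture remains open.

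There is also a concrete technical misstep in the proposed closing argument. To lower-bound the \emph{lower} threshold $f^-_H(n)$ one must exhibit a winning Enforcer strategy for \emph{every} bias $b$ up to the claimed threshold, so the argument cannot hinge on number-theoretic divisibility conditions on $\binom{n}{2}$; indeed the proofs of Theorems~\ref{lowerTree}, \ref{thm:odd_cycle} and \ref{thm:even_cycle} close with Fact~\ref{fact:threats2}, which requires only that the threat count exceed $b$ and imposes no divisibility hypothesis. Your proposal instead invokes Lemma~\ref{bigrem} together with Fact~\ref{fact:threats}: Lemma~\ref{bigrem} is the \emph{Avoider-side} tool used in Theorem~\ref{lowerH} (it produces a $b$ with a large remainder, favourable to Avoider), and Fact~\ref{fact:threats} is the tool used for the \emph{upper} threshold in Theorem~\ref{upperCycle}, where one may restrict to divisibility-friendly $b$. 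Using them here would at best control $f^+_H(n)$, not $f^-_H(n)$. Separately, even granting a generalised Proposition~\ref{blowup_unic}, one must check that the threat count $\prod_i|V_i|/(C(b+1))^{e(H)-1}$ exceeds $b$ for all $b$ up to $n^{1/m(H)}$; this is automatic when $H$ is balanced (i.e.\ $m(H)=e(H)/v(H)$), but for unbalanced $H$ the exponent of $b$ in the threat count is governed by $e(H)$ while $m(H)$ is achieved on a proper subgraph, and your remark about a ``minimally chosen $F$'' gestures at, but does not carry out, the adjustment needed to reconcile the two.
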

In order to prove this conjecture, it is enough to find a good strategy for Enforcer, since Avoider's part 
follows from Theorem~\ref{lowerH}. 

As for the upper threshold of the $H$-game, the problem seems more complicated. 
We showed that for unicyclic graphs $H$, the upper threshold $\fu_H(n)$ is of order 
$n^{v(H)/(v(H)-1)}$ for infinitely many values of $n$. We conjecture that this is the correct order for all values of $n$. 
This problem is open even if $H$ is {a triangle and seems to require a major strengthening of the number theoretic tools}. 

\begin{conj}\label{conj:cycle}
For every unicyclic graph $H$, {one has}  
$$\fu_{H}(n)=\Theta\big(n^{\frac{v(H)}{v(H)-1}}\big).$$
\end{conj}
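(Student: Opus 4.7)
The upper bound $\fu_{H}(n) = O(n^{v(H)/(v(H)-1)})$ is already in hand: for a unicyclic graph $H$ we have $e(H) = v(H) = h$, hence $m'(H) = (h-1)/h$, and \eqref{eqn:mprime} delivers exactly this bound. The substance of the conjecture is therefore the matching lower bound $\fu_{H}(n) = \Omega(n^{h/(h-1)})$ for \emph{every} sufficiently large $n$, since Theorem~\ref{upperCycle}(\ref{upper_item2}) establishes it only along an infinite subsequence of values of $n$.

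The natural plan is to reuse the scheme of Theorem~\ref{upperCycle}(\ref{upper_item2}). Given $n$, one chooses an even bias $b$ with $b+1 = \Theta(n^{h/(h-1)})$, applies Theorem~\ref{upper_strategy} to Enforcer to force either an immediate win or at least $\gamma^{h-1} n^{h}/(b+1)^{h-1}$ canonical $H$-threats, and then invokes Fact~\ref{fact:threats} to convert those threats into a win. With $b+1 = \Theta(n^{h/(h-1)})$ the guaranteed number of threats is a constant depending only on the implicit multiplicative constant in $b+1$, so it is enough to produce an even $b$ in the target window for which $\binom{n}{2} - t$ is divisible by $b+1$ for some positive integer $t$ of bounded size.

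The main obstacle is precisely this last number-theoretic requirement, the one flagged by the authors in the paragraph preceding the conjecture. Lemma~\ref{div} delivers such a $b$ only for infinitely many $n$: it works with the single integer $\binom{n}{2}-1$ and asks for a divisor in a narrow window around $n^{h/(h-1)}$, which may simply fail to exist for arbitrary $n$ (already for $h=3$ one is asking that $n(n-1)/2$ be close to a multiple of some odd integer near $n^{3/2}$, and no arithmetic structure forces this). Allowing the threat count to grow by loosening the bias to $b+1 = \epsilon n^{h/(h-1)}$ widens the admissible $t$ to $(\gamma/\epsilon)^{h-1}$, so one would be content with any odd $q$ dividing some $\binom{n}{2}-t$ with $t$ up to this constant and $q$ in the prescribed window; even in this relaxed form, one is asking about divisors of integers lying in short multiplicative intervals, a genuinely hard Diophantine question with no uniform-in-$n$ answer from current techniques.

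Before attacking that problem head-on I would first try the board-shrinking device already used in the proof of Theorem~\ref{lowerTree}: have Enforcer open by claiming all edges of $E(K_n) \setminus E(K_{n'})$ for a suitably chosen $n'$, reducing the game to a $(1\colon b)$ game on $K_{n'}$ with Enforcer having a prescribed partial opening move --- precisely the ``first round'' scenario already covered by Theorem~\ref{upper_strategy}. The condition $b+1 \le \gamma (n')^{h/(h-1)}$ still leaves $n'$ free to range over an interval $[\alpha n, n]$ with $\alpha = (c/\gamma)^{(h-1)/h} < 1$, producing $\Theta(n)$ distinct values of $\binom{n'}{2}$ and thereby replacing the divisibility obligation on $\binom{n}{2}$ by the more flexible question of whether some $\binom{n'}{2}$ in this family admits a suitable residue. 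Deciding whether this extra freedom suffices --- that is, whether for every large $n$ some pair $(n', b)$ in the allowed ranges realises the required congruence --- is the averaged Diophantine question I expect to be the core remaining difficulty and the step absorbing most of the work.
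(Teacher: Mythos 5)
The statement you were asked to prove is Conjecture~\ref{conj:cycle}, and the paper offers no proof of it; indeed, in the surrounding discussion the authors explicitly state that the problem is open even for $H=K_3$ and ``seems to require a major strengthening of the number theoretic tools.'' Your write-up correctly recognises this: you identify the upper bound as already following from \eqref{eqn:mprime}, you identify the lower bound for \emph{all} large $n$ as the open content, and you pinpoint the Diophantine obstruction that Lemma~\ref{div} resolves only along a subsequence. That diagnosis matches the authors' own framing exactly.

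One flaw worth flagging in your proposed line of attack: the board-shrinking device does \emph{not} relax the divisibility requirement in the way you describe. When Enforcer spends his opening moves eating $E(K_n)\setminus E(K_{n'})$, the game is still a $(1:b)$ game on $K_n$ with $\binom{n}{2}$ edges in total; the hypothesis of Fact~\ref{fact:threats} --- that $\binom{n}{2}-t$ be divisible by $b+1$ --- is a constraint on the full board and is not transmuted into a constraint on $\binom{n'}{2}$. This is precisely why the board-shrinking trick works painlessly in the proof of Theorem~\ref{lowerTree}: there Enforcer closes via Fact~\ref{fact:threats2} (which only needs at least $b+1$ threats, no divisibility), whereas Theorem~\ref{upperCycle}(\ref{upper_item2}) must invoke Fact~\ref{fact:threats} because the threat count at bias $\Theta(n^{h/(h-1)})$ is only $O(1)$. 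So the ``$\Theta(n)$ distinct values of $\binom{n'}{2}$'' do not become available as divisibility targets, and the underlying number-theoretic difficulty is unchanged. To make a version of your idea viable one would have to redo the counting in Fact~\ref{fact:threats} for a game where the true board is a subgraph of $K_n$ that Enforcer has carved out, and verify that the round structure then really depends on $\binom{n'}{2}$ rather than on $\binom{n}{2}$; that verification is missing and I do not think it goes through without modification.
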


We believe that for general graphs $H$, the order of the upper threshold \emph{cannot} always be expressed by a simple formula 
which depends only on the number of vertices and edges of $H$ and, possibly, the parameters $m(H)$ and $m'(H)$. 
Let $H-e$ denote the graph obtained 
{from} $H$ by deleting its edge $e$ (together with a vertex of degree one, if 
this vertex is an end of $e$). We pose the following conjecture. 

\begin{conj}\label{conj:H-minus}
For every graph $H$ with at least three edges, {one has} 
$$\fu_H(n)=\Theta\big(\max\limits_{e\in H} \fl_{H-e}(n)\big).$$
\end{conj}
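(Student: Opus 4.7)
The plan is to prove the two directions of $\fu_H(n) = \Theta(\max_{e\in H} \fl_{H-e}(n))$ separately. Write $b^{\star}(n) := \max_{e \in E(H)} \fl_{H-e}(n)$, and fix edges $e_0, e_1 \in E(H)$ achieving $\fl_{H-e_0}(n) = b^{\star}(n)$ and $m(H-e_1) = \min_{e} m(H-e)$, respectively. Both directions reduce the $H$-game to suitable $(H-e)$-games, but the lower bound is by far the more substantial of the two.

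For the lower bound $\fu_H(n) = \Omega(b^{\star}(n))$, the plan is to show that Enforcer wins the $(1{:}b)$ $H$-game on $K_n$ for some $b$ of order $b^{\star}(n)$. The idea is to upgrade Enforcer's winning strategy for the $(H-e_0)$-game from producing a single copy of $H-e_0$ in Avoider's graph to producing many vertex-disjoint copies --- a \emph{supersaturation} strengthening. Each such copy has a free edge playing the role of $e_0$, hence an $H$-threat, and once their number exceeds $b$, Enforcer wins via Fact~\ref{fact:threats2}. The natural framework is the blow-up game of Section~\ref{sec:blowup}: one would extend Lemma~\ref{blowup_matching} and Proposition~\ref{blowup_unic} from trees and unicyclic multigraphs to arbitrary $H - e_0$. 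This extension is precisely where the main obstacle lies: the current inductive argument relies heavily on the at-most-one-cycle assumption, and handling multiple cycles in the \emph{Inductive step} will require genuinely new ideas.

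For the upper bound $\fu_H(n) = O(b^{\star}(n))$, the known bound $\fu_H(n) = O(n^{1/m'(H)})$ from~\eqref{eqn:mprime} reduces the task to proving $b^{\star}(n) = \Omega(n^{1/m'(H)})$. A short density calculation yields $m(H-e) \ge m'(H)$ for every $e \in E(H)$: if $F \subseteq H$ realises $m'(H)$, then either $F \subseteq H-e$ directly (when $e \notin E(F)$), giving $m(H-e) \ge e(F)/v(F) \ge m'(H)$, or $F - e \subseteq H - e$ has density at least $(e(F)-1)/v(F) = m'(H)$ (when $e \in E(F)$). Hence, assuming Conjecture~\ref{conj_lower} for the graph $H - e_1$, one obtains $b^{\star}(n) \ge \fl_{H-e_1}(n) = \Omega(n^{1/m(H-e_1)}) \ge \Omega(n^{1/m'(H)})$, and the desired bound follows. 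When $m(H - e_1) \le 1$, which holds whenever $H$ itself is a tree or unicyclic, Theorem~\ref{lowerm1} supplies the required unconditional lower bound; otherwise the conclusion rests on Conjecture~\ref{conj_lower} for the specific graph $H - e_1$.

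The central obstacle is the lower bound: even the simplest unresolved case, $H = K_3$ with $H - e = P_3$, would yield $\fu_{K_3}(n) = \Omega(n^{3/2})$, settling the celebrated open problem on the Avoider--Enforcer triangle game. For $H$ with several disjoint dense subgraphs (such as disconnected graphs, or two cliques joined by a bridge), the upper-bound direction may additionally demand sharper upper bounds on $\fu_H$ beyond~\eqref{eqn:mprime}, analogous to Theorem~\ref{manyUnic}. Altogether, a unified blow-up/supersaturation framework generalising Theorem~\ref{upper_strategy} to graphs of arbitrary density appears to be the most promising single route toward simultaneously establishing Conjecture~\ref{conj_lower} and Conjecture~\ref{conj:H-minus}.
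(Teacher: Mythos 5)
This statement is one of the paper's \emph{conjectures} (stated in Section~\ref{sec:conc_remarks}), not a theorem; the paper offers no proof, so there is nothing to compare your attempt against. What you have written is, by your own framing, a research plan rather than a proof, and the high-level diagnosis is accurate: the lower bound $\fu_H(n)=\Omega\big(\max_{e\in H}\fl_{H-e}(n)\big)$ is the genuinely hard direction, a supersaturation extension of the blow-up machinery of Section~\ref{sec:blowup} (Lemma~\ref{blowup_matching}, Proposition~\ref{blowup_unic}) beyond graphs with at most one cycle is indeed the natural route, and the smallest open case $H=K_3$ would already give $\fu_{K_3}(n)=\Omega(n^{3/2})$ and settle the Avoider--Enforcer triangle problem. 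All of this matches the discussion around Conjectures~\ref{conj_lower} and~\ref{conj:H-minus} in the paper.

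Your sketch of the upper-bound direction, however, contains a sign error that leaves a real gap. The density inequality you prove, $m(H-e)\ge m'(H)$ for every $e$, is correct, but what you then need in the chain $\Omega\big(n^{1/m(H-e_1)}\big)\ge\Omega\big(n^{1/m'(H)}\big)$ is the \emph{reverse} inequality $m(H-e_1)\le m'(H)$. What you actually established gives $n^{1/m(H-e_1)}\le n^{1/m'(H)}$, i.e.\ an upper bound on $b^\star(n)$ consistent with \eqref{eqn:mprime}, but not the lower bound on $b^\star(n)$ that the argument requires. To repair this one would have to show separately that $\min_{e}m(H-e)\le m'(H)$, which together with your inequality would give equality; this is plausible for connected $H$, but it is a distinct claim your calculation does not touch, and it is false for some disconnected $H$: for $H$ equal to two vertex-disjoint $K_4$'s one has $m'(H)=11/8$ while $m(H-e)=3/2$ for every edge $e$, so $\min_e m(H-e)>m'(H)$. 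This is precisely the situation where, as you correctly remark at the end, \eqref{eqn:mprime} is no longer sharp and a stronger upper bound on $\fu_H$ in the spirit of Theorem~\ref{manyUnic} would be needed --- but note that this is not merely an additional refinement for ``several disjoint dense subgraphs''; without the missing $\min_e m(H-e)\le m'(H)$ step the reduction does not close even in the connected case.
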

\noindent
In view of \eqref{thr_stars}, {Conjecture~\ref{conj:H-minus} } is true for stars. {From \eqref{eqn:mprime}, Theorems~{\ref{lowerH}}, \ref{lowerm1} and \ref{upperCycle}(iii)}, we see that it also 
holds for infinitely many values of $n$ in the case of 
unicyclic graphs.  

Finally, let us comment on the monotone version of Avoider--Enforcer games, introduced in \cite{ae10}.
Recall that in a monotone $(1:b)$ Avoider--Enforcer game, the players select \emph{at least} 1 
and  \emph{at least} $b$ board elements per turn, respectively, and the threshold of the game is the greatest $b$ such that 
Enforcer has a winning strategy. Our strategy for Enforcer, presented in Section~\ref{sec:blowup}, can be applied
to monotone $H$-games as well, though the analysis is slightly more complicated.  
This way, in view of a general upper bound on the threshold $f_H^{\text{mon}}(n)$ in monotone Avoider--Enforcer  $H$-games proved in
\cite{af14}, one can obtain that if $H$ is a unicyclic graph, then $f_H^{\text{mon}}(n)=\Theta\big(n^{\frac{v(H)}{v(H)-1}}\big)$.

\section*{Acknowledgements}

Tuan Tran is supported by the Humboldt Research Foundation, and by the GACR grant GJ16-07822Y. This work was partially done while he was affiliated with the Institute of Computer Science of the Czech Academy of Sciences, with institutional support RVO:67985807. Lior Gishboliner is supported by ERC Starting Grant 633509. The authors wish to thank the organisers of the {\em TAU-FUB Workshop on Positional Games}, hosted 
by the Freie Universit\"at Berlin in 2016, where this work was initiated. Finally, the authors wish to thank the anonymous referees for their valuable suggestions which improved the presentation of this paper.

\bibliographystyle{amsplain}

\appendix
\section{Number theoretic tools: Proofs}
\label{app:number_theory_tools}

In this appendix we provide the proofs of Lemmata \ref{bigrem}, \ref{div} and \ref{div2}.

\begin{proof}[\textbf{Proof of Lemma \ref{div}}.]
	Let $r_1$ and $r_2$ be two natural numbers such that
	$r_1/r_2=\alpha$ if $\alpha\le 1$, and $r_1/r_2=\alpha-1$ if $1<\alpha \le 2$.
	Suppose that $C\in\N$ is some large constant. 
	For every odd integer $k\ge 1$, we put $n=2(Ck)^{r_2}+2$, and 
	define $q=k^{r_1}$ if $\alpha \le 1$ and $q=(n+1)k^{r_1}$ if $1<\alpha\le 2$. By a simple calculation one can verify that $n^{\alpha}/(4C^{r_1})\le q\le
	2n^{\alpha}/C^{r_1}$.
	Moreover, $q$ is odd, and $q\mid\binom n2-1$ since $\binom n2-1=(n+1)(n-2)/2\,$.
\end{proof}
\begin{proof}[\textbf{Proof of Lemma \ref{div2}}.]
	Define $x=
	\lfloor \frac12 cn^{\alpha-1}\rfloor$ if $\lfloor
	\frac12 cn^{\alpha-1}\rfloor$ is odd, and $x=\lfloor \frac12 cn^{\alpha-1}\rfloor-1$ if $\lfloor \frac12 cn^{\alpha-1}\rfloor$ is even. 
	Pick $k\in\{1,2,\ldots,2x\}$ so that $n-k-1$ is divisible by $2x$.
	In particular, $n+k$ is odd.
	Let $t=\binom{k+1}{2}$ and $q=(n+k)x$.
	As $n+k$ and $x$ are odd, so is $q$. 
	Moreover, $q\le (n+cn^{\alpha-1})cn^{\alpha-1}/2<cn^{\alpha}$, and
	$q\ge n\cdot (\frac12 cn^{\alpha-1}-2)>\frac13
	cn^{\alpha}$ for sufficiently large $n$.
	Finally, we have $t\le (cn^{\alpha-1}+1)cn^{\alpha-1}/2<c^2 n^{2\alpha-2}$ for $n$ sufficiently large,
	and
	$q\mid\binom n2-t$ since $\binom n2-t=(n+k)(n-k-1)/2\,$ and 
	{$2x\mid n-k-1$}.  
\end{proof}

The rest of this section is devoted to the proof of Lemma \ref{bigrem}.
Given integers $s\ge 1$ and $m$, we define $r_s(m)$ as the unique integer $j$ such that $-s/2<j\le s/2$ 
and $m\equiv j\text{ mod } s$.
It is easy to see that 
\begin{equation}\label{addyt}
|r_s(m+m')|\le |r_s(m)|+|r_s(m')|\quad\textrm{and}\quad |r_s(m)|\le |r_s(m+m')|+|r_s(m')|. 
\end{equation}

In the proof of Lemma \ref{bigrem}, we shall use the following simple observation.
\begin{obs}\label{smallrs}
	Let $s,j,y$ and $m$ be natural numbers such that $j\le s/2$, $y<s/4$, $j<ys/(4m)$ and $|r_{s-j}(m)|\le y$. Then $|r_s(m)|\le 2y$.  
\end{obs}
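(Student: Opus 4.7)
The plan is to apply the first inequality of \eqref{addyt} after expanding $m$ modulo $s-j$. Set $r := r_{s-j}(m)$, so $|r| \le y$ and $m = k(s-j) + r$ for a unique integer $k$; one has $k \ge 0$ since $m \ge 1$ and $s - j > 0$. Reducing modulo $s$ gives $m \equiv r - kj \pmod{s}$, so by \eqref{addyt},
\[
|r_s(m)| = |r_s(r - kj)| \le |r_s(r)| + |r_s(kj)|.
\]
Since $|r| \le y < s/4 < s/2$, we have $|r_s(r)| = |r| \le y$, and it remains to show $|r_s(kj)| \le y$.

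If $k = 0$ the bound is trivial, so assume $k \ge 1$. The key step is to extract a lower bound on $m$: from $m = k(s-j) + r \ge (s-j) - y$ together with $j \le s/2$ and $y < s/4$, we obtain $m \ge s/2 - s/4 = s/4$, and consequently the hypothesis $j < ys/(4m)$ yields $j < y$. From $k \le (m+y)/(s-j) \le 2(m+y)/s$, it then follows that
\[
kj \le \frac{2jm}{s} + \frac{2jy}{s} < \frac{y}{2} + \frac{2y^2}{s} < \frac{y}{2} + \frac{y}{2} = y,
\]
where the bound $2jm/s < y/2$ uses $j < ys/(4m)$ directly, and $2jy/s < y/2$ uses the newly derived $j < y$ together with $y < s/4$. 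Since $0 \le kj < y < s/2$, we have $|r_s(kj)| = kj < y$, and combining with $|r_s(r)| \le y$ gives $|r_s(m)| \le 2y$, as required.

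The only (minor) obstacle is balancing the two contributions $2jm/s$ and $2jy/s$ in the estimate of $kj$: the hypothesis $j < ys/(4m)$ immediately controls the first term, while for the second one needs the additional bound $j < y$, which in turn requires the case-specific lower bound $m \ge s/4$ obtained from $k \ge 1$. This is why the split on the value of $k$ is essentially forced by the argument rather than a matter of convenience.
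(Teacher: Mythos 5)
Your proof is correct and follows essentially the same route as the paper's: write $m=k(s-j)+r$ with $r=r_{s-j}(m)$, observe $m\equiv r-kj\pmod s$, bound $kj<y$ (with a case split handling the trivial situation when $m$ is small, which in both arguments corresponds to $k=0$), and finish via \eqref{addyt}. The only difference is cosmetic: the paper bounds $kj$ by a single fraction $y(m+y)/(4m-y)<y$, whereas you split $kj\le 2jm/s+2jy/s$ and bound each summand by $y/2$, first deriving $j<y$ from $m\ge s/4$. Both manipulations are valid and rest on the same inequalities.
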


\begin{proof}
	If $m\le s/4$, then $r_{s-j}(m)=m=r_s(m)$ and the assertion follows.
	Now suppose that $m>s/4$. Clearly there exists an integer $t$ such that $m=(s-j)t +r_{s-j}(m)$.
	As $|r_{s-j}(m)|\le y$, $y<s/4<m$, and $j<ys/(4m)$, one has  
	$jt=j\cdot\frac{m-r_{s-j}(m)}{s-j}
	<\frac{ys}{4m}\cdot\frac{m+y}{s-ys/(4m)}
	=y\cdot\frac{m+y}{4m-y}
	<y$.
	Combined with \eqref{addyt} we get 
	$$|r_{s-j}(m)-jt|\le |r_{s-j}(m)|+jt<2y<s/2,$$
	as $r_{s-j}(m)|\le y<s/4$. Since $m=st+r_{s-j}(m)-jt$, it follows that $|r_{s}(m)|=|r_{s-j}(m)-jt|<2y$.
\end{proof}

\begin{proof}[\textbf{Proof of Lemma \ref{bigrem}}.] 
As $\alpha>1$, we can write $\alpha=t+\beta$, in which $t\in \mathbb{N}$ and $0<\beta \le 1$. 
Let $C=C(t)$ be a sufficiently large integer, and let 
$$b=Cq, \quad x=\left\lf C^{1+\frac{\beta}{2t}} b^{\frac{1-\beta}{t}}\right\rf.$$ 
By the assumption, we obtain 
\begin{equation*}
c_2\cdot\left(\frac{b}{C}\right)^{t+\beta}\le N\le c_1\cdot\left(\frac{b}{C}\right)^{t+\beta}.
\end{equation*}
Let $(a_t,a_{t-1},\ldots,a_0)$ be the greatest, in sense of lexicographical order,  sequence of non-negative integers such that
\begin{equation}\label{nform}
N=a_0+a_1b+a_2b(b-x)+\ldots+a_tb(b-x)\cdots(b-(t-1)x).
\end{equation}
It is not difficult to see that $a_0,a_1,\ldots,a_{t-1} <b$, and 
$$
a_t=(1+o(1))\frac{N}{b^t}\le (1+o(1)) \frac{c_1b^\beta}{C^{t+\beta}}<\frac{b}{x}
$$ 
for sufficiently large constant $C$.

We will show that the required number $k$ can be found among 
$t+1$ numbers $b,b-x,\ldots,b-tx$. Suppose to the contrary that for every $k\in\{b,b-x,\ldots,b-tx\}$ the remainder of the division of $N$ by $k$ is at most $q$. As $q=b/C<(b-tx)/2\le k/2$, this implies $|r_k(N)|\le q$ for every such $k$. Combined with \eqref{nform}, we find
\begin{equation*}
q\ge |r_{b-ix}(N)|\overset{\eqref{nform}}{=} |r_{b-ix}(a_0+ixa_1+i(i-1)x^2a_2+\ldots + i!x^{i}a_i)|=r_{b-ix}(A_i)
\end{equation*}
for every $i\in \{0,1,\ldots,t\}$, where
\begin{equation*}
A_i:=a_0+ixa_1+i(i-1)x^2a_2+\ldots + i!x^{i}a_i.
\end{equation*}

For each $i\in \{0,1,\ldots,t\}$, we have
$$
ixA_i \le txA_t \le (t+2)!x^t(b+xa_t)=O(b^{2-\beta}),
$$
as $x=O(b^{(1-\beta)/t})$ and $a_t<b/x$. It follows that  $ixA_i<qb/4$. Since $q<b/4$, $ix=o(b)$, $ixA_i<qb/4$ and $|r_{b-ix}(A_i)| \le q$, we may apply Observation \ref{smallrs} to $s=b$, $j=ix$, $y=q$ and $m=A_i$ to conclude that
\begin{equation*}
|r_b(A_i)| \le 2q \quad \text{for each $i \in \{0,1,\ldots,t\}$}.
\end{equation*}

Since $A_i=a_0+ixa_1+i(i-1)x^2a_2+\ldots+i!x^i a_{i}=i!x^i a_{i}+\sum_{j=0}^{i-1} \binom ij j! x^j a_j$,
we get
$$|r_b(i!x^i a_{i})| \overset{\eqref{addyt}}{\le} |r_b(A_i)|+\sum_{j=0}^{i-1} \binom ij |r_b(j! x^j a_j)|.$$
As $|r_b(A_i)| \le q$, it follows that there exists a constant $d_i$ depending only on $i$ such that 
\begin{equation}\label{smalla}
|r_b(i!x^i a_{i})|<d_iq.
\end{equation}

However, in view of (\ref{nform}) and the definition of $x$, we have
$$t!x^t a_t=(1+o(1))t!C^{t+\beta/2}b^{1-\beta}\cdot \frac{N}{b^t}
\ge(1+o(1))t!C^{t+\beta/2}\frac{c_2b}{C^{t+\beta}}
=(1+o(1))t!C^{1-\alpha/2}c_2q>d_tq,$$
for $C$ sufficiently large.
On the other hand,
$$t!x^t a_t=(1+o(1))t!C^{t+\beta/2}b^{1-\beta}\cdot \frac{N}{b^t}
\le(1+o(1))t!C^{t+\beta/2}\frac{c_1b}{C^{t+\beta}}
=(1+o(1))t!\frac{c_1b}{C^{\beta}}<\frac{b}{2},$$
provided that $C$ is large enough.
We infer that  $r_b(t!x^t a_t)=t!x^t a_t>d_tq$, which contradicts (\ref{smalla}).
\end{proof}

\end{document}